\documentclass[a4paper,11pt,oneside,reqno]{amsart}
\usepackage{silence}
\allowdisplaybreaks
\usepackage[utf8]{inputenc}
\usepackage[pdftex]{graphicx}
\usepackage[foot]{amsaddr}
\usepackage{amssymb}
\usepackage{esint}
\usepackage{amsmath}
\usepackage{amsfonts}
\usepackage{amsthm}
\usepackage{bm}
\usepackage{mathrsfs}
\usepackage{mathtools}
\usepackage{comment}
\mathtoolsset{showonlyrefs=true}
\usepackage{appendix}
\usepackage{enumerate}
\usepackage[a4paper, total={6.6in, 9.7in}]{geometry}
\usepackage[stretch=30,shrink=30]{microtype}
\usepackage{stmaryrd}
\usepackage[normalem]{ulem} 
\usepackage{xcolor}
\definecolor{antiquefuchsia}{rgb}{0.57, 0.36, 0.51}
\definecolor{azure}{rgb}{0.0, 0.5, 1.0}

\usepackage[backref=page, colorlinks = true, linkcolor = black, citecolor = antiquefuchsia]{hyperref}
\hypersetup{
    pdftitle={High-Dimensional Families of Minimal Surfaces of Arbitrary Genus in Round Spheres},
    pdfauthor={Riccardo Caniato},
    pdfsubject={Minimal surfaces and twistor geometry},
    pdfkeywords={minimal surfaces, superminimal immersions, twistor spaces, Riemann surfaces}
}
\renewcommand*{\backref}[1]{}
\renewcommand*{\backrefalt}[4]{%
    \ifcase #1 (Not cited.)%
    \or        (Cited on page~#2.)%
    \else      (Cited on pages~#2.)%
    \fi}

\makeatletter
\def\th@plain{%
	\thm@notefont{}
	\itshape 
}
\def\th@definition{%
	\thm@notefont{}
	\normalfont 
}
\makeatother

\numberwithin{equation}{section}

\newtheorem{theorem}{Theorem}[section]
\newtheorem{lemma}[theorem]{Lemma}
\newtheorem{proposition}[theorem]{Proposition}
\newtheorem{corollary}[theorem]{Corollary}

\newtheorem{conjecture}[theorem]{Conjecture}

\theoremstyle{definition}
\newtheorem{definition}[theorem]{Definition}

\newtheorem{remark}[theorem]{Remark}

\newcommand{\n}{\mathbb{N}}
\newcommand{\s}{\mathbb{S}}

\newcommand{\z}{\mathbb{Z}}
\renewcommand{\r}{\mathbb{R}}
\renewcommand{\c}{\mathbb{C}}

\DeclareMathOperator{\HH}{HH}

\DeclareMathOperator{\dist}{dist}

\DeclareMathOperator{\vol}{vol}

\DeclareMathOperator{\Id}{Id}

\renewcommand{\H}{\mathrm{H}}

\newcommand{\D}{\mathbb{D}}
\newcommand{\cp}{\mathbb{CP}}

\title[High-Dimensional Families of Minimal Surfaces]{High-Dimensional Families of Minimal Surfaces\\of Arbitrary Genus in Round Spheres}

\author[Riccardo Caniato]{Riccardo Caniato}
\address{Mathematics Institute, University of Warwick,
Coventry, United Kingdom}
\email{riccardo.caniato@warwick.ac.uk}
\date{\today} 
\subjclass[2020]{Primary 53A10; Secondary 53C28, 53C42, 53C43}

\begin{document}
\begin{abstract}
    We prove the existence of arbitrarily high-dimensional families of minimal surfaces of any prescribed genus and conformal structure in even-dimensional round spheres. More precisely, let $n\geq2$ and let $\Sigma$ be any closed Riemann surface of genus $g$. We construct a sequence of degrees $d_\ell\to+\infty$ such that, for every $\ell$, there exists a complex manifold of complex dimension $2d_\ell+n^2(1-g)$ consisting of linearly full branched superminimal immersions of $\Sigma$ into $\mathbb S^{2n}$ of degree $d_\ell$. In particular, this yields parametrised families of linearly full branched minimal immersions whose dimensions tend to infinity. 
\end{abstract}
\maketitle
\tableofcontents
\section{Introduction}
\subsection{Minimal immersions and harmonic maps}
The study of minimal surfaces in round spheres has known a spectacular development through several complementary viewpoints, combining differential geometry, complex analysis, spectral theory, integrable systems, and variational methods. Let $\Sigma$ be a closed smooth surface and let $\varphi:\Sigma\rightarrow\s^m$ be a smooth immersion. We denote by $g_{\varphi}:=\varphi^*g_{\s^m}$ the metric induced on $\Sigma$ by $\varphi$ from the round metric $g_{\s^m}$ on $\s^m$. We recall that the symmetric bilinear form
\begin{align*}
    A_\varphi(X,Y):=\big(\nabla^{\mathbb S^m}_{d\varphi(X)}d\varphi(Y)\big)^\perp=\nabla^{\mathbb S^m}_{d\varphi(X)}d\varphi(Y) -d\varphi\big(\nabla^{g_\varphi}_{X}Y\big)\qquad\forall\,X,Y\in\mathfrak{X}(\Sigma)
\end{align*}
is called the \emph{second fundamental form} of $\varphi$ and encodes the extrinsic curvature of $\varphi(\Sigma)$ in the ambient $\s^m$.\footnote{\,Here, $\nabla^{\s^m}$ is the Levi--Civita connection on $(\s^m,g_{\s^m})$ and $(\,\cdot\,)^\perp$ denotes the orthogonal projection onto the normal bundle of $\varphi$.} 
The \emph{mean curvature vector} of $\varphi$ is given by
\begin{align*}
    H_\varphi:=\frac{1}{2}\operatorname{tr}_{g_\varphi}A_\varphi,
\end{align*}
and the immersion $\varphi$ is said to be \emph{minimal} if $H_\varphi\equiv0$. Equivalently, $\varphi$ is minimal if it is a critical point of the \textit{area functional}
\begin{align*}
    \operatorname{Area}(\varphi):=\int_\Sigma d\vol_{g_\varphi}
\end{align*}
with respect to arbitrary smooth variations. When $\Sigma$ is endowed with a conformal structure, the preceding variational characterization is closely related to the theory of harmonic maps. Let $[g]$ be a conformal class of Riemannian metrics on $\Sigma$ and let $\varphi:(\Sigma,[g])\rightarrow \s^m$ be a smooth map. Given any representative $g\in[g]$, the \emph{Dirichlet energy} of $\varphi$ is defined by
\begin{align*}
    \operatorname{E}(\varphi):=\frac{1}{2}\int_\Sigma \lvert d\varphi\rvert_g^2\,d\vol_g.
\end{align*}
Since the domain is two-dimensional, this quantity is invariant under conformal changes of $g$ and therefore depends only on the conformal structure of $\Sigma$. The map $\varphi$ is said to be \emph{harmonic} if it is a critical point of $\operatorname{E}$ with respect to arbitrary smooth variations. The link between minimal immersions and harmonic maps is given by the notion of conformality. We say that $\varphi$ is \emph{weakly conformal} if there exists a non-negative function $\lambda:\Sigma\to[0,+\infty)$ such that
\begin{align*}
    \varphi^*g_{\mathbb S^m}=\lambda^2g.
\end{align*}
%
%
%
It is a classical fact that a conformal immersion of a Riemann surface is harmonic if and only if it is minimal. More generally, a nonconstant weakly conformal harmonic map has at most isolated points at which its differential vanishes. These points are called \emph{branch points}, and away from them the map is a conformal minimal immersion. Accordingly, a \emph{branched minimal immersion}
\begin{align*}
    \varphi:(\Sigma,[g])\rightarrow\mathbb S^m
\end{align*}
is defined as a nonconstant weakly conformal harmonic map. Equivalently, there exists a discrete subset $\mathcal B_\varphi\subset\Sigma$ such that $\varphi$ restricts to a conformal minimal immersion on $\Sigma\smallsetminus\mathcal B_\varphi$, while
\begin{align*}
    d\varphi(x)=0 \qquad\forall\,x\in\mathcal B_\varphi.
\end{align*}
The set $\mathcal B_\varphi$ is called the \emph{branch locus} of $\varphi$. For the classical theory of branch points and the regularity theory of harmonic maps on two-dimensional domains, we refer the reader to \cite{GulliverOssermanRoyden1973,SacksUhlenbeck1981,Helein1991,Riviere2007}, among others.

\subsection{The codimension-one case}
The codimension-one theory of minimal surfaces in the round three-sphere has played a central role in the development of the subject. While Almgren proved that every minimal two-sphere in $\s^3$ is totally geodesic (see \cite{Almgren1966}), Lawson showed that this rigidity disappears once the topology is allowed to vary, constructing closed embedded minimal surfaces of arbitrary genus in $\s^3$ (see \cite{Lawson1970}). His work initiated a vast literature on the construction of embedded examples, including symmetry-based methods of Hsiang--Lawson and Karcher--Pinkall--Sterling, as well as more recent gluing and desingularization constructions of Kapouleas--Yang, Choe--Soret, and Kapouleas--Wiygul (see \cite{HsiangLawson1971,KarcherPinkallSterling1988,KapouleasYang,ChoeSoret,KapouleasWiygul}). At the same time, fundamental rigidity and compactness results were obtained by Choi--Schoen, Urbano, and Brendle (see \cite{ChoiSchoen1985,Urbano1990,Brendle2013}). 

A different variational approach is provided by min-max theory, whose geometric-measure-theoretic foundations were laid by Almgren and Pitts, while a closely related three-dimensional theory was developed by Simon--Smith (see \cite{Almgren1962,Pitts1981,Smith1982,ColdingDeLellis2003}). Building on these foundations, Marques--Neves proved the Willmore conjecture and established the existence of infinitely many embedded minimal hypersurfaces under positive Ricci curvature (see \cite{MarquesNeves2014,MarquesNeves2017}). A central subsequent development was Zhou's proof of the multiplicity-one conjecture of Marques--Neves, showing that for bumpy metrics the minimal hypersurfaces realizing the volume spectrum can be chosen two-sided and with multiplicity one (see \cite{ZhouMultiplicityOne}). Together with the Weyl law, density and equidistribution results of Liokumovich--Marques--Neves, Irie--Marques--Neves, and Marques--Neves--Song, these advances established a remarkably detailed picture of the min-max minimal spectrum for generic metrics; Song ultimately removed the genericity and curvature assumptions and proved the existence of infinitely many closed embedded minimal hypersurfaces in every closed Riemannian manifold of dimension between three and seven (see \cite{LiokumovichMarquesNeves2018,IrieMarquesNeves2018,MarquesNevesSong2019,Song2023}).

Minimal tori in $\s^3$ also admit a powerful description through
integrable systems. Beginning with Hitchin's spectral-curve construction for harmonic tori and related developments by Bobenko and Burstall--Ferus--Pedit--Pinkall, this approach translates the harmonic map equation into algebro-geometric data (see \cite{Hitchin1990,Bobenko1991,BurstallFerusPeditPinkall1993}). Particularly relevant to the present work is Carberry's construction of real $N$-dimensional families of linearly full minimal tori in $\s^3$, for every $N\geq1$, including positive-dimensional families with fixed rectangular conformal structure (see \cite{Carberry}). Thus, arbitrarily high-dimensional families already occur in codimension one, although Carberry's construction is confined to genus one and special conformal structures.

A further source of minimal immersions in spheres comes from spectral geometry. A classical theorem of Takahashi characterizes minimal immersions into round spheres in terms of Laplace eigenfunctions (see \cite{Takahashi1966}). Building on this connection, the relation between extremal Laplace eigenvalues and minimal immersions was developed in work of Yang--Yau, Li--Yau, El Soufi--Ilias, and Montiel--Ros, and subsequently exploited by Nadirashvili in his solution of Berger's problem for the two-torus (see \cite{YangYau1980,LiYau1982,ElSoufiIlias1986,MontielRos1986,Nadirashvili1996}). The existence and structure of extremal metrics were further studied by Nadirashvili--Sire, Petrides, and Karpukhin--Nadirashvili--Penskoi--Polterovich (see \cite{NadirashviliSire,Petrides2014,KarpukhinNadirashviliPenskoiPolterovich}). More recently, Karpukhin--Stern related conformal eigenvalues to a min-max theory for sphere-valued harmonic maps, while Karpukhin--Kusner--McGrath--Stern used equivariant eigenvalue
optimization to construct new embedded minimal surfaces in $\s^3$;
their large-topology geometry was subsequently analyzed by Karpukhin--McGrath--Stern (see \cite{KarpukhinStern,KarpukhinKusnerMcGrathStern,KarpukhinMcGrathStern2026}). Related developments of Petrides and Karpukhin--Petrides--Stern have recently completed the existence theory for metrics maximizing the first normalized Laplace eigenvalue on closed surfaces (see \cite{Petrides2024,KarpukhinPetridesStern}).
\subsection{Higher codimension and twistor geometry}
Despite the extensive understanding achieved in codimension one, comparatively little is known about minimal surfaces in high-dimensional round spheres. Indeed, higher codimension comes with a substantial loss of rigidity, allowing minimal surfaces to display a far richer and less constrained geometry. On the other hand, higher codimension also brings into play additional complex-analytic structures which have no direct analogue in $\s^3$, and which can be effectively exploited to explore this new landscape.

The first non-totally-geodesic linearly full examples of minimal surfaces in higher-dimensional round spheres go back to Bor{\accent23 u}vka, who constructed minimal two-spheres of constant Gaussian curvature in $\s^{2n}$ (see \cite{Boruvka1933}).\footnote{\,Given a Hilbert space $H$, we denote by $\s_H$ the unit sphere in $H$ centred at $0\in H$. We say that a smooth map $\varphi:\Sigma\to\s_H$ is \textit{linearly full} if $\varphi(\Sigma)$ is not contained in any proper closed linear subspace of $H$.} A systematic study of minimal two-spheres in round spheres of arbitrary dimension was subsequently initiated by Calabi, who proved, in particular, that a linearly full minimal immersion of $\s^2$ into a round sphere can occur only when the ambient dimension is even, and uncovered the strong isotropy properties satisfied by such maps (see \cite{Calabi1967}). These ideas were further developed by Chern and Barbosa through the association of a minimal two-sphere with a totally isotropic holomorphic directrix curve in complex projective space (see \cite{Chern1970, Barbosa1975}). This isotropy phenomenon leads naturally to the distinguished class of superminimal surfaces. Given a branched minimal immersion $\varphi:\Sigma\to\s^{2n}$, let $z$ be a local holomorphic coordinate on $\Sigma\smallsetminus\mathcal B_\varphi$. For every $k\in\n$, we denote by
\begin{align*}
    \mathbf{O}_\varphi^{k}(x):=\operatorname{span}_{\c}\big\{\partial_{\bar z}\varphi(x),\partial_{\bar z}^2\varphi(x),\ldots,\partial_{\bar z}^k\varphi(x)\big\}\subset\c^{2n+1}
\end{align*}
the $k$-th \textit{osculating space} of $\varphi$ at $x$. The immersion $\varphi$ is said to be \emph{superminimal} if $\mathbf{O}_\varphi^{k}(x)$ is totally isotropic for every $x\in\Sigma\smallsetminus\mathcal B_\varphi$ and every $k\in\n$.\footnote{\,Recall that a complex linear subspace $P\subset\c^m$ is called \textit{totally isotropic} if $(v,w)=0$ for every $v,w\in P$, where $(\,\cdot\,,\,\cdot)$ denotes the bilinear extension to $\c^m$ of the Euclidean inner product on $\r^m$.} For $\Sigma=\s^2$, the results of Calabi and Chern imply that every linearly full minimal immersion into $\s^{2n}$ is superminimal (see \cite{Calabi1967,Chern1970}). The importance of superminimal immersions is that they can be described in terms of holomorphic maps. More precisely, if
\begin{align*}
    \mathscr Z_n:=\operatorname{SO}(2n+1)/\operatorname{U}(n)
\end{align*}
denotes the twistor space of $\s^{2n}$ (see Definition \ref{Definition: twistor bundle}), then a linearly full superminimal immersion admits a canonical holomorphic lift to $\mathscr Z_n$, satisfying a suitable \textit{horizontality} condition (see Definition \ref{Definition: horizontality}). Conversely, holomorphic horizontal curves in $\mathscr Z_n$ project to superminimal surfaces in $\s^{2n}$.

The theory of superminimal surfaces beyond the genus-zero setting was pioneered by Bryant (see \cite{Bryant1982,BryantOctonions1982}). In the particular case $n=2$, we have $\mathscr Z_2\cong\mathbb{CP}^3$, and the horizontal distribution of this twistor space is the standard holomorphic contact distribution on $\cp^3$. Exploiting this structure, Bryant proved that every compact Riemann surface admits a conformal minimal---in fact superminimal---immersion into $\s^4$. Hano subsequently extended this twistor-theoretic existence theory to every even-dimensional sphere, proving that every compact Riemann surface admits a linearly full conformal minimal immersion into $\s^{2n}$ for every $n\geq2$ (see \cite{Hano1996}). The underlying superminimal system was later described explicitly by Chi--Fern\'andez--Wu and Fern\'andez, who obtained a formulation of the problem in terms of normalized potentials and constructions of superminimal surfaces of arbitrarily large area for any prescribed compact Riemann surface (see \cite{ChiFernandezWu1999}, \cite{Fernandez2003}).

Beyond the superminimal setting, several complementary approaches have led to substantial new developments on higher-codimensional minimal surfaces. From a variational perspective, Colding--Minicozzi developed a min-max construction for sweepouts by two-spheres based on conformal reparametrization and harmonic replacement, and Zhou subsequently extended this approach first to tori and then to surfaces of arbitrary genus, incorporating the variation and possible degeneration of the conformal structure and obtaining bubble-tree limits of branched minimal surfaces (see \cite{ColdingMinicozzi2008,Zhou2010,Zhou2017}). A different parametric min-max theory for minimal surfaces in arbitrary codimension was developed by Rivi\`ere using a viscosity method (see \cite{Riviere2017}). The resulting parametrized stationary varifolds were further analyzed by Pigati--Rivi\`ere, who established their regularity and proved the multiplicity-one property of the associated min-max minimal surfaces (see \cite{PigatiRiviere2020a,PigatiRiviere2020b}). Very recently, Cheng--Zhou returned to the harmonic-replacement framework and, combining it with a construction of non-trivial sweepouts by higher-genus surfaces, proved that every closed Riemannian manifold of dimension four or five contains a closed branched minimal immersion (see \cite{ChengZhou2026}). In a different direction, using Atiyah's structure theory of holomorphic vector bundles over elliptic curves, Fraser--Schoen obtained systolic rigidity results for stable and covering-stable minimal tori in arbitrary codimension (see \cite{FraserSchoen2025}). On the other hand, a new high-codimensional scenario has emerged from the work of Song, connecting harmonic maps and minimal surfaces in spheres with representation theory, geometric group theory, and random matrix theory. Building on his formulation of the spherical Plateau problem (see \cite{SongSphericalPlateau}), Song studied equivariant harmonic maps associated with high-dimensional unitary representations and showed that their geometry exhibits a remarkable asymptotic probabilistic rigidity (see \cite{SongRandomHarmonicMaps}). In particular, for random unitary representations, the induced metrics of the corresponding harmonic maps concentrate around a hyperbolic metric; as an application, he constructed sequences of closed branched minimal surfaces in spheres of increasing dimension which converge locally in the Benjamini--Schramm sense to the hyperbolic plane and whose Gaussian curvatures converge to a negative constant in an averaged sense. Motivated by this asymptotic picture, in recent joint work with Li and Song we studied the corresponding limiting problem in the infinite-dimensional Hilbert sphere. For representations weakly equivalent to the regular representation, we showed that equivariant area minimization rigidly determines not only the intrinsic, but the extrinsic geometry of the minimizers (see \cite{CaniatoLiSong2025}). A central ingredient in this rigidity theorem is a classification of hyperbolic minimal surfaces in the infinite-dimensional Hilbert sphere, which extends to infinite dimensions the classical results of Calabi, Kenmotsu, and Bryant for minimal surfaces of constant Gaussian curvature in finite-dimensional round spheres (see \cite{Calabi1967,Kenmotsu1976,Bryant3}).
\subsection{Main results and methods}
We first fix some notation. Recall that the Picard group $\operatorname{Pic}(\mathscr Z_n)$ of $\mathscr{Z}_n$ is isomorphic to $\mathbb Z$. We denote by $\mathcal O_{\mathscr Z_n}(1)$ its unique ample generator. Accordingly, we define the \emph{degree} of a holomorphic map $\psi:\Sigma\to\mathscr Z_n$ from a closed Riemann surface $\Sigma$ by
\begin{align*}
    \deg(\psi)
    :=
    \big\langle
        \psi^*c_1\big(\mathcal O_{\mathscr Z_n}(1)\big),
        [\Sigma]
    \big\rangle
    =
    \big\langle
        c_1\big(\mathcal O_{\mathscr Z_n}(1)\big),
        \psi_*[\Sigma]
    \big\rangle.
\end{align*}
If $\varphi:\Sigma\to\s^{2n}$ is a linearly full branched superminimal
immersion, we define the \emph{degree} of $\varphi$ to be the degree of its unique twistor lift (see Definition~\ref{Definition: degree}). For every
$d\in\mathbb N\smallsetminus\{0\}$, we denote by
\begin{align*}
    \operatorname{SM}_{d}^{f}(\Sigma,\s^{2n})
\end{align*}
the set of all linearly full branched superminimal immersions
$\varphi:\Sigma\to\s^{2n}$ of degree $d$. Note that $\operatorname{SM}_{d}^{f}(\Sigma,\s^{2n})$ is a set of parametrized maps, and no quotient by reparametrizations of $\Sigma$ or by ambient isometries is taken.

The main result of this paper is the following.
\begin{theorem}\label{Theorem: main statement 1}
    Let $n\geq2$ and $g\in\mathbb N$, and let $\Sigma$ be any closed
    Riemann surface of genus $g$. Then there exists a sequence $\{d_\ell\}_{\ell\ge 1}\subset\mathbb N\smallsetminus\{0\}$ such that
    \begin{align*}
        d_\ell\rightarrow+\infty \qquad\mbox{ as }\, \ell\to+\infty
    \end{align*}
    and, for every $\ell\ge 1$, there exists a complex manifold
    \begin{align*}
        X_{\Sigma,d_\ell}\subset\operatorname{SM}_{d_\ell}^{f}(\Sigma,\s^{2n})
    \end{align*}
    of complex dimension
    \begin{align*}
        \dim_{\mathbb C}X_{\Sigma,d_\ell}=2d_\ell+n^2(1-g).
    \end{align*}
\end{theorem}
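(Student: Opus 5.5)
We realise $X_{\Sigma,d_\ell}$ as an open subset of the space of holomorphic horizontal curves $\psi:\Sigma\to\mathscr Z_n$ of degree $d_\ell$, and show it is smooth of the expected dimension by deformation theory.

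\textbf{Setup and expected dimension.} The horizontal distribution $\mathcal H\subset T\mathscr Z_n$ is a holomorphic subbundle of rank $n$. It is a contact-type distribution: the quotient $L:=T\mathscr Z_n/\mathcal H$ is a line bundle, and the O'Neill/Levi bracket $\Lambda^2\mathcal H\to L$ is nondegenerate in the appropriate sense (for $n=2$ this is the standard contact structure on $\cp^3$). Horizontal holomorphic maps $\psi$ are deformed by sections of a sheaf $\mathcal N_\psi$. At the linearised level this sheaf is governed by the restriction $\psi^*\mathcal H$ together with $\psi^*L$. The expected dimension is $\chi$ of this deformation complex, computed by Riemann--Roch. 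We must express $c_1(\mathcal H)$ and $c_1(L)$ in terms of $\mathcal O_{\mathscr Z_n}(1)$; I expect $\deg\psi^*L$ to be a fixed multiple of $d$. The result should be $2d+n^2(1-g)$, where $n^2=\dim_{\mathbb C}\mathscr Z_n-n(n-1)/2$ matches the counting. I would verify this count carefully, e.g.\ against the Bryant case $n=2$, where contact curves of degree $d$ in $\cp^3$ have expected dimension $2d+4(1-g)$.

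\textbf{Unobstructedness.} The obstruction space is $H^1(\Sigma,\mathcal N_\psi)$. By Serre duality it suffices to show that $\psi^*L\otimes K_\Sigma^{-1}$ has enough positivity. The filtration by osculating bundles (the holomorphic frame/harmonic sequence of the superminimal map) should give a sequence of line subbundles whose degrees control $H^1$. Then a degree large relative to $g$ and to the ramification forces $H^1=0$. Implicit function theorem arguments on the Banach manifold of maps (or Kuranishi theory/Horikawa's deformation of holomorphic maps) then give a complex manifold of dimension $\chi=2d_\ell+n^2(1-g)$ near $\psi$.

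\textbf{Existence of good seeds and degrees $d_\ell$.} To produce one linearly full horizontal curve of each degree $d_\ell$ with vanishing obstruction, I would use the Bryant--Hano/Chi--Fern\'andez--Wu construction. It writes horizontal curves explicitly from meromorphic data $f_1,\dots$ on $\Sigma$ via a normalized potential. Choosing the meromorphic functions with poles of high order at fixed points yields degrees $d_\ell\to\infty$ along an arithmetic-type sequence. Genericity of the data should ensure linear fullness, which is an open condition and hence preserved on a neighbourhood. It should also ensure the required vanishing of $H^1$, for instance by making the relevant line bundles of degree $>2g-2$. Linear fullness also guarantees that the twistor lift is unique, so distinct $\psi$ give distinct $\varphi$, and the projection is injective and $X_{\Sigma,d_\ell}$ embeds into $\operatorname{SM}^f_{d_\ell}$.

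\textbf{Main obstacle.} The hardest step is the vanishing of $H^1$. The degrees of the graded pieces of the osculating filtration are not all large: the branching of the lift, i.e.\ the ramification divisors of the successive osculating curves, can make some pieces negative. I would first try to control these pieces by choosing seeds whose ramification is concentrated in a few points with explicit multiplicities. I would then compute the degrees of the graded pieces directly from the Plücker-type formulas for the harmonic sequence, and show each graded piece has degree $>2g-2$ once $d_\ell$ is large.
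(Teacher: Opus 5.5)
There is a genuine gap, and part of your framework is incorrect for $n\geq3$. The quotient $T^{1,0}\mathscr Z_n/H\mathscr Z_n=V\mathscr Z_n\cong\wedge^2\mathscr S_n^\vee$ is not a line bundle. $H\mathscr Z_n$ has rank $n$ in a manifold of dimension $n(n+1)/2$, so $V\mathscr Z_n$ has rank $n(n-1)/2$, and horizontality is a coupled system of $n(n-1)/2$ first-order equations. The contact picture you invoke exists only for $n=2$: deformations governed by $\psi^*\mathcal H$ and a single line bundle $\psi^*L$, with obstructions controlled by positivity of $\psi^*L\otimes K_\Sigma^{-1}$. Your numerology shows the same slip, since $\dim_{\c}\mathscr Z_n-n(n-1)/2=n$, not $n^2$. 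The correct count is the index of the two-term complex given by the linearized horizontality operator $L_\psi:E\to K_\Sigma\otimes V$, where $E=\psi^*T^{1,0}\mathscr Z_n$ and $V=\psi^*V\mathscr Z_n$. It equals $\chi(E)-\chi(K_\Sigma\otimes V)=\bigl(2nd+\tfrac{n(n+1)}2(1-g)\bigr)-\bigl(2(n-1)d+\tfrac{n(n-1)}2(g-1)\bigr)=2d+n^2(1-g)$. The ranks add because the target carries the twist by $K_\Sigma$.

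The more serious problem is unobstructedness. The obstruction is the cokernel of the first-order differential operator $L_\psi:H^0(E)\to H^0(K_\Sigma\otimes V)$, not $H^1$ of a holomorphic vector bundle. $H^1(E)$ and $H^1(K_\Sigma\otimes V)$ vanish easily in large degree, but this does not give surjectivity. The failure sits at the branch points, where the kernel of $L_\psi$ is cut out by jet conditions, and the number of branch points grows with the degree. Showing via Pl\"ucker formulas that graded pieces of an osculating filtration have degree $>2g-2$ does not address whether these local conditions can be met simultaneously by global holomorphic sections. Likewise, ``genericity should ensure vanishing'' is not an argument: openness of the unobstructed locus is vacuous until you exhibit an unobstructed seed in each degree.

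The paper supplies exactly this missing piece, in four steps.
\begin{enumerate}
\item It takes seeds $\psi=\tilde\varphi_n\circ f$, where $\tilde\varphi_n$ is the Bor\r{u}vka twistor lift and $f:\Sigma\to\cp^1$ is a degree-$\ell$ cover with only simple branch points.
\item An $\operatorname{SL}(2,\c)$-equivariant filtration of $\tilde\varphi_n^*T^{1,0}\mathscr Z_n$ reduces $L_\psi$ to the graded jet operators $D_{m,f}(u)=(du_a-u_{a+1}\,df)_a$.
\item For data $\beta=(b_0,\dots,b_{m-1})\,dz$, each graded equation is solved by the recursion $u_{a+1}=(u_a'-b_a)/f'$. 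This recursion is pole-free exactly when $m$ triangular conditions on the $(2m-1)$-jet of a scalar potential $s\in H^0(f^*\mathcal O_{\cp^1}(4m+2))$ hold at every branch point. These conditions are realized by a local ansatz, corrected by a section of $f^*\mathcal O_{\cp^1}(4m+2)(-2mR_f)\cong f^*\mathcal O_{\cp^1}(2)\otimes K_\Sigma^{-2m}$. That bundle is still positive for large $\ell$, and the margin is exactly the residual factor $f^*\mathcal O_{\cp^1}(2)$.
\item The graded solutions are then lifted through the filtration using vanishing of $H^1$.
\end{enumerate}
Without a concrete seed and a mechanism of this kind for the full rank-$n(n-1)/2$ system, your argument does not establish surjectivity. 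Hence it establishes neither smoothness nor the dimension formula.
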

The strength of Theorem~\ref{Theorem: main statement 1} lies not merely in the existence of branched superminimal immersions, but in the size of the families in which they occur. Indeed, we have already observed that classical works \cite{Bryant1982,Hano1996} established the existence of linearly full superminimal immersions of an arbitrary closed Riemann surface into even-dimensional round spheres. Theorem~\ref{Theorem: main statement 1} shows that, for every prescribed conformal structure and every fixed $n\geq2$, such immersions occur, along a sequence of degrees tending to infinity, in families whose dimensions become arbitrarily large. Since the dimension of the isometry group of $\s^{2n}$ depends only on $n$, this abundance persists even after accounting for ambient congruences: in particular, the dimension of the families modulo ambient congruences remains unbounded. This should be contrasted with the considerable difficulty of producing comparably large families in the codimension-one setting and, more generally, in situations where no twistor correspondence is available.

The precise dimension obtained in Theorem~\ref{Theorem: main statement 1} also places our result naturally between two previously understood regimes. In the case $n=2$, Chi--Mo proved that, for sufficiently large degree $d$, the dimension of every irreducible component of the space of branched superminimal immersions of a fixed Riemann surface of genus $g$ into $\s^4$ lies between $2d+4(1-g)$ and $2d-g+4$ (see \cite{ChiMo1996}), while Chi subsequently showed that the lower value
\begin{align*}
2d+4(1-g)
\end{align*}
is attained by a non-totally-geodesic component for infinitely many degrees (see \cite{Chi2000}). At the other end, when $g=0$ and the target dimension is arbitrary, Fern\'andez proved that the space of linearly full harmonic two-spheres of degree $d$ in $\s^{2n}$ has pure dimension
\begin{align*}
2d+n^2
\end{align*}
(see \cite{Fernandez2012}). The dimension
\begin{align*}
2d+n^2(1-g)
\end{align*}
appearing in Theorem~\ref{Theorem: main statement 1} simultaneously recovers Chi's formula when $n=2$ and Fern\'andez's formula when $g=0$. In this sense, our result extends the arbitrary-genus picture from $\s^4$ to every even-dimensional sphere $\s^{2n}$, while at the same time extending the high-codimensional picture from the two-sphere to every prescribed closed Riemann surface. Related second-variation results have recently been obtained by Ball--Madnick~\cite[Theorems~1.3 and~1.4]{BallMadnick2026SecondVariation}, who establish upper and lower bounds for the Morse index and nullity
of linearly full branched superminimal immersions of compact Riemann surfaces into $\s^{2n}$. Their nullity estimates provide complementary infinitesimal information to the families constructed here. 

The mechanism underlying this extension is the holomorphic nature of the superminimal problem. Under the twistor correspondence, a superminimal immersion $\varphi:\Sigma\to\s^{2n}$ is encoded by its holomorphic horizontal lift. From the analytic point of view, this passage substantially changes the character of the deformation problem. Viewed directly in the sphere, minimal immersions solve a non-linear second-order elliptic system and, after quotienting out the degeneracy arising from reparametrizations, their infinitesimal deformations are governed by a self-adjoint Fredholm Jacobi operator. Since every self-adjoint Fredholm operator has index zero, there is no index-theoretic reason forcing the existence of non-trivial deformations; indeed, whenever the Jacobi operator is non-degenerate modulo the Jacobi fields induced by ambient symmetries, the corresponding minimal immersion is locally rigid up to congruence. The twistor correspondence replaces this problem by a first-order elliptic system of Cauchy--Riemann type, together with the first-order horizontality constraint. The deformation theory of holomorphic maps is of a fundamentally different nature: infinitesimal deformations are described by spaces of holomorphic sections, whose dimensions are governed by the Riemann--Roch theorem and can grow linearly with the degree, while the corresponding obstructions are encoded by higher cohomology groups and can, in favourable regimes, be forced to vanish. Thus, the twistor correspondence does considerably more than provide a convenient reformulation of the minimal surface equation: it transforms a second-order variational problem naturally prone to rigidity into a first-order complex-geometric problem for which non-trivial, and potentially very large, deformation spaces are built into the underlying elliptic theory. The main challenge is to show that this holomorphic flexibility survives the additional horizontality constraint.

It is precisely at this point that the simultaneous passage to arbitrary genus and arbitrary target dimension differs substantially from the previously understood cases. In the case $n=2$, Chi--Mo are able to exploit the contact structure of $\mathscr Z_2\cong\cp^3$ to reduce horizontality to a scalar condition admitting an explicit algebraic description in terms of pairs of pencils and their branch data. This reduction has no direct analogue for $n>2$, where the vertical distribution has complex rank $n(n-1)/2$ and horizontality becomes a coupled system. On the genus side, the passage from $\cp^1$ to an arbitrary Riemann surface is equally significant. The genus-zero theory benefits from the exceptional holomorphic geometry of $\cp^1$: every holomorphic vector bundle splits as a direct sum of line bundles, and the homogeneous Bor{\accent23 u}vka curve can be analyzed globally through explicit algebraic and representation-theoretic data. For a prescribed surface of positive genus there is no comparable splitting theorem, and the relevant deformation problem therefore retains genuinely global bundle-theoretic information which cannot be reduced to line-bundle degree computations. To preserve an explicit high-degree reference geometry on such a surface, we pull back the Bor{\accent23 u}vka curve by holomorphic maps $f:\Sigma\to\cp^1$. By the Riemann--Hurwitz formula, every such map is necessarily ramified when $g>0$, and precisely at these forced branch points the pointwise non-degeneracy of the linearized horizontality equation deteriorates. Thus, the degeneration along the branch locus is not an artifact of the construction, but the analytic manifestation of the passage from the homogeneous genus-zero setting to arbitrary topology. The key new achievement of this work is therefore to solve the coupled linearized horizontality equation globally, with quantitative control across the branch locus. 
More precisely, for every closed Riemann surface $\Sigma$, we construct holomorphic covers
\begin{align*}
    f:\Sigma\rightarrow\cp^1
\end{align*}
of arbitrarily large degree $\ell$ whose geometry remains quantitatively controlled as $\ell\to+\infty$. We call such maps \emph{balanced covers} (see Section~3). At the natural scale $\ell^{-\frac12}$, their differentials are uniformly controlled from above and their energy has a uniform averaged lower bound. Their first two derivatives also satisfy a joint polynomial lower bound. In particular, all branch points are simple. Composing with the twistor lift $\tilde\varphi_n:\cp^1\to\mathscr Z_n$ of a Bor{\accent23 u}vka sphere gives the holomorphic horizontal curves
\begin{align*}
    \psi:=\tilde\varphi_n\circ f,
\end{align*}
which we call \emph{balanced Bor{\accent23 u}vka covers}.

To convert this finite-order non-degeneracy into surjectivity of the linearized horizontality operator, we develop two further quantitative ingredients. The first is a theory of localized holomorphic sections for high-degree line bundles. Building on Tian's holomorphic peak-section construction \cite[Lemma~1.2]{Tian1990} and the approximately holomorphic methods of Donaldson and Auroux \cite{Donaldson1996,Auroux1997}, in Section~2 we prove a holomorphic peak-section theorem under averaged curvature positivity and uniform natural-scale curvature bounds. The resulting holomorphic sections realize arbitrary prescribed finite jets while retaining quantitative Gaussian localization. The same construction is used in Section~3 to build quantitative holomorphic pencils and, after a transversality argument, the balanced covers described above; in particular, their branching is controlled by a global polynomial non-degeneracy estimate for the Wronskian and its first derivative. The second ingredient is a polynomially bounded holomorphic right inverse for the linearized horizontality operator (Lemma~\ref{Lemma: polynomial holomorphic right inverse}). Along the Bor{\accent23 u}vka curve, we construct a homogeneous filtration whose quotient operators are explicit first-order operators on jet bundles. After pullback by $f$, these quotient equations can be solved recursively away from the branch locus. The apparent poles at each branch point are removed by finitely many exact conditions on the jets of a scalar section. A global $\bar\partial$-correction preserves these jets, and holomorphic lifting through the filtration yields the required right inverse. The coercivity estimates of Section~2 and the balanced-cover bounds ensure that its norm grows at most polynomially with $\ell$. Its existence proves surjectivity, while duality gives the corresponding projected spectral gap. The holomorphic implicit function theorem and Riemann--Roch then yield
\begin{align*}
    \dim_{\mathbb C}X_{\Sigma,d_\ell}=2d_\ell+n^2(1-g),
\end{align*}
as asserted in Theorem~\ref{Theorem: main statement 1}.
\subsection{Related open problems}
We conclude the introduction by discussing two open problems naturally suggested by our results.

The first concerns the global structure of the space of superminimal
immersions. By Proposition~\ref{Proposition: correspondence holo+hor with supermin},
every linearly full branched superminimal immersion has a unique
horizontal holomorphic lift whose projection is either that immersion
or its antipodal map. Since our parametrized maps are not identified
with their antipodal maps, the resulting bijection is
\begin{align*}
    \operatorname{HH}_d^f(\Sigma,\mathscr Z_n)\times\{+1,-1\}
    &\rightarrow\operatorname{SM}_d^f(\Sigma,\s^{2n}),\\
    (\psi,\varepsilon)&\mapsto\varepsilon\,\pi\circ\psi.
\end{align*}
Here $\operatorname{HH}_d^f(\Sigma,\mathscr Z_n)$ denotes the space of
linearly full holomorphic horizontal maps of degree $d$.
We endow $\operatorname{SM}_d^f(\Sigma,\s^{2n})$ with the complex-algebraic
structure transported from this disjoint union of two copies of
$\operatorname{HH}_d^f(\Sigma,\mathscr Z_n)$.
The two copies have identical component dimensions, so the dimension
conjecture below is equivalently formulated on either side. 
The dimension appearing in Theorem~\ref{Theorem: main statement 1} suggests that the families constructed there should not be exceptional components, but should instead reflect the expected dimension of the entire linearly full locus, at least for $d$ sufficiently large. This leads us to the following higher-genus version of the Bolton--Woodward conjecture (see \cite{BoltonWoodward1992,BoltonWoodward1993}).
\begin{conjecture}\label{Conjecture: higher genus Bolton Woodward}
Let $n\geq2$ and let $\Sigma$ be a closed Riemann surface of genus $g$. Then, for every sufficiently large degree $d$, the space
\begin{align*}
    \operatorname{SM}_d^f(\Sigma,\s^{2n})
\end{align*}
is of pure complex dimension
\begin{align*}
    2d+n^2(1-g).
\end{align*}
\end{conjecture}
The restriction to linearly full maps is essential. Already for $n=2$, Chi--Mo showed that the full moduli space contains a totally geodesic component of dimension $2d-g+4$, whereas the expected dimension of its non-totally-geodesic part is $2d-4g+4$ (see \cite{ChiMo1996}). In $\s^4$, a superminimal surface is linearly full precisely when it is non-totally-geodesic, while in higher dimensions a non-totally-geodesic superminimal surface may still be contained in a proper even-dimensional subsphere. Thus, linear fullness is the natural higher-dimensional replacement of the condition appearing in the conjecture of Chi--Mo. Indeed, Conjecture~\ref{Conjecture: higher genus Bolton Woodward} recovers their conjecture that the non-totally-geodesic locus should have pure dimension $2d-4g+4$ when $n=2$ (see \cite{ChiMo1996}). At the opposite end, when $g=0$, it reduces, through the twistor correspondence, to the conjecture of Bolton--Woodward, proved by Fern\'andez, according to which the space of linearly full harmonic two-spheres of degree $d$ in $\s^{2n}$ has dimension $2d+n^2$ (see \cite{Fernandez2012}). Theorem~\ref{Theorem: main statement 1} provides evidence in the remaining regime: for arbitrary $n$ and $g$, and along an unbounded sequence of degrees, it produces smooth families having exactly the dimension predicted above. The problem left open is therefore, in essence, to rule out the appearance in high degree of linearly full components of excess dimension.

A second question concerns the curvature of the surfaces produced by Theorem~\ref{Theorem: main statement 1}. This is closely related to Problem~101 in Yau's celebrated list of open problems, which asks for the existence of closed minimal surfaces with strictly negative Gaussian curvature in round spheres of dimension at least four (see \cite{Yau1982}). Recent work of Ancona--Labourie--Roig-Sanchis--Toulisse, building on Song's asymptotic construction, gives such surfaces in every sufficiently high-dimensional round sphere (see \cite{AnconaLabourieRoigSanchisToulisse2025}). It remains particularly interesting to understand whether negatively curved minimal surfaces can be found in each fixed ambient dimension.

Our construction suggests the following more precise conjecture.
\begin{conjecture}\label{Conjecture: negative curvature}
    Let $n\geq3$. For every sufficiently large $g$ and every closed
    Riemann surface $\Sigma$ of genus $g$, the families
    $X_{\Sigma,d_\ell}$ in Theorem~\ref{Theorem: main statement 1}
    can be chosen so that, for some $\ell=\ell(\Sigma)$,
    there exists $\varphi\in X_{\Sigma,d_\ell}$ satisfying
    \begin{align*}
        K_\varphi<0 \qquad\mbox{ on }\,\Sigma\smallsetminus\mathcal B_\varphi.
    \end{align*}
\end{conjecture}
The genus and degree must grow comparably in the above conjecture. Indeed, the dimension formula in Theorem~\ref{Theorem: main statement 1} and the non-emptiness of $X_{\Sigma,d_\ell}$ give
\begin{align*}
    d_\ell\geq\frac{n^2}{2}(g-1).
\end{align*}
On the other hand, nonpositive Gaussian curvature away from the branch locus forces $d_\ell\leq C_n(g-1)$, by Proposition~\ref{Proposition: degree bound under nonpositive curvature}.

There are several reasons to investigate this regime of simultaneous growth of genus and degree. Our first motivation comes from Song's probabilistic construction (see \cite{SongRandomHarmonicMaps}). Consider the thrice-punctured sphere, whose Teichm\"uller space is trivial and whose fundamental group is the free group $F_2$ with two generators. A representation
\begin{align*}
    \rho:F_2\rightarrow U(N)
\end{align*}
is determined by a pair of unitary matrices, so that
$\operatorname{Hom}(F_2,U(N))\cong U(N)^2$ carries a natural probability measure induced by the product Haar measure and has real dimension $2N^2$. To such a representation, Song associates an equivariant harmonic map into the corresponding unit sphere by minimizing the renormalized energy. As $N\to+\infty$, the geometry of the harmonic representative associated with a random representation concentrates, with probability tending to one, around a rescaled hyperbolic metric. More precisely, the convergence is locally smooth on compact subsets away from the cusp regions; no analogous uniform statement holds near the images of the cusps. The closed branched minimal surfaces obtained from this construction nevertheless converge globally in the Benjamini--Schramm sense to a rescaled hyperbolic plane, while their Gaussian curvatures converge to the corresponding negative constant in an averaged sense. 

Although Song's asymptotic regime involves increasing target dimension, it suggests a broader principle which is one of the motivations for the present work: in sufficiently large natural families of minimal surfaces, a surface chosen generically or at random should exhibit an increasingly hyperbolic geometry as the number of available parameters grows. The families constructed in Theorem~\ref{Theorem: main statement 1} provide a particularly interesting setting in which to test this heuristic without allowing the ambient sphere to vary. It is therefore natural to ask whether the geometry of typical
members of $X_{\Sigma,d_\ell}$ becomes increasingly hyperbolic
along sequences for which $d_\ell\asymp g\to+\infty$ and
$2d_\ell+n^2(1-g)\to+\infty$, and Conjecture~\ref{Conjecture: negative curvature} may be viewed as a first deterministic manifestation of this principle.

A second, and more directly geometric, indication comes from the work of Mohsen on negatively curved complete intersections (see \cite{Mohsen2022}). Given a fixed complex projective manifold of complex dimension at least three, equipped with a fixed Hermitian metric and an ample line bundle, Mohsen showed that for every sufficiently large degree there exists a complete-intersection curve whose induced Gaussian curvature is strictly negative. Thus, in his setting the ambient geometry remains fixed throughout: it is precisely the passage to high degree which creates enough local flexibility to impose a pointwise curvature inequality. The mechanism is based on Donaldson--Auroux asymptotic techniques and on the avoidance of suitable algebraic subsets in finite jet spaces.

The parallel with our setting is particularly suggestive. In addition to the growth of $\dim_{\mathbb C}X_{\Sigma,d_\ell}$, the analytic tools developed here provide localized holomorphic sections with prescribed finite jets, while the deformation argument gives quantitative solvability of the linearized horizontality equation. To use these ingredients for curvature control, one would need to construct horizontal deformations with suitable prescribed second-order geometry and control the passage to genuine horizontal maps. Since Gaussian curvature is determined by the second-order geometry of the immersion, this suggests studying whether the jets for which $K_\varphi<0$ fails can be avoided within suitably chosen regular families. As in Mohsen's construction, increasing the degree may provide the local flexibility needed to impose a pointwise curvature inequality inside a fixed ambient space.

The restriction $n\geq3$ in Conjecture~\ref{Conjecture: negative curvature} is sharp, since a nonconstant branched superminimal immersion of a closed Riemann surface into $\s^4$ cannot have nonpositive Gaussian curvature throughout its regular locus. In particular, allowing branch points does not remove the obstruction in $\s^4$ (see Remark~\ref{Remark: nonpositive curvature obstruction in S4}).
\begin{remark}\label{Remark: nonpositive curvature obstruction in S4}
    In $\s^4$, nonpositive Gaussian curvature is impossible even when branching is allowed. Indeed, suppose that $\varphi:\Sigma\to\s^4$ is a nonconstant branched superminimal immersion of a closed Riemann surface and that $K_\varphi\leq0$ on its regular locus. Since $K_{\varphi}\leq0$, the Gauss equation implies that $A_{\varphi}$ is nowhere zero on the regular locus. Superminimality means that the curvature ellipse is a circle, so we may choose a local positively oriented orthonormal tangent frame $(e_1,e_2)$ and a local orthonormal normal frame $(n_1,n_2)$ such that
    \begin{align*}
        A_{\varphi}(e_1,e_1)=a n_1,\qquad
        A_{\varphi}(e_1,e_2)=a n_2,\qquad
        A_{\varphi}(e_2,e_2)=-a n_1,
    \end{align*}
    where $a>0$ and the Gauss equation gives $1-K_{\varphi}=2a^2$.
    Define the tangent and normal local connection 1-forms $\omega$ and $\eta$ by
    \begin{align*}
        \nabla^{g_{\varphi}}e_1=\omega\otimes e_2,
        \qquad
        \nabla^\perp n_1=\eta\otimes n_2.
    \end{align*}
    The Codazzi equations yield
    \begin{align*}
        (\eta-2\omega)(e_1)=-e_2(\log a),
        \qquad
        (\eta-2\omega)(e_2)=e_1(\log a),
    \end{align*}
    or equivalently $\eta-2\omega=*d\log a$, where $*$ is the
    Hodge star of $g_{\varphi}$. On the other hand, the Gauss and Ricci equations give
    \begin{align*}
        d\omega=-K_{\varphi}\,d\vol_{g_{\varphi}},
        \qquad
        d\eta=-2a^2\,d\vol_{g_{\varphi}},
    \end{align*}
    where $d\vol_{g_{\varphi}}$ is the oriented area form of $g_{\varphi}$. Taking the exterior derivative of the Codazzi identity and using $d\hspace{-0.5mm}\ast\hspace{-0.5mm}df=(\Delta_{g_{\varphi}} f)\,d\vol_{g_{\varphi}}$, with
    $\Delta_{g_{\varphi}}=\operatorname{div}_{g_{\varphi}}\nabla_{g_{\varphi}}$, we obtain
    \begin{align*}
        \Delta_{g_{\varphi}}\log a=2K_{\varphi}-2a^2=3K_{\varphi}-1.
    \end{align*}
    Consequently, the function
    \begin{align*}
        v:=-\log(1-K_\varphi)\leq0
    \end{align*}
    satisfies
    \begin{align*}
        \Delta_{g_\varphi}v=2-6K_\varphi>0
        \qquad\mbox{on }\,\Sigma\smallsetminus\mathcal B_\varphi.
    \end{align*}
    Subharmonicity is conformally invariant in dimension two.
    Since $v$ is bounded above, it extends across the isolated branch points as a subharmonic function on $\Sigma$, by assigning the upper-limit value at each branch point. This extension is not identically $-\infty$. Compactness and the maximum principle therefore force it to be constant, contradicting the strict inequality above.
\end{remark}
Starting with $\s^6$, the situation appears considerably more flexible, both from the point of view of curvature and of embeddedness. There is no analogous reason forcing the second fundamental form of an unbranched superminimal surface to vanish somewhere, and we expect that the same high-degree local flexibility underlying Conjecture~\ref{Conjecture: negative curvature} could also be used to eliminate branch points. At the same time, embeddedness should be favoured by a simple dimension count. Indeed, self-intersections of a map $\varphi:\Sigma\to\s^{2n}$ are detected by the two-point evaluation map
\begin{align*}
(x,y)\mapsto\big(\varphi(x),\varphi(y)\big),
\qquad x\neq y,
\end{align*}
through its intersection with the diagonal in $\s^{2n}\times\s^{2n}$. Since the diagonal has real codimension $2n$, whereas $\Sigma\times\Sigma$ has real dimension $4$, transversality would rule out double points whenever $n\geq3$. Thus, if the horizontal deformation spaces constructed here enjoy the corresponding two-point transversality property, one should expect an unbranched generic member to be embedded. The difficulty is that the available deformations are constrained simultaneously by holomorphicity and horizontality, so ordinary general-position arguments cannot be applied directly; to our knowledge, the corresponding generic embeddedness statement is not presently known even for the classical moduli spaces of linearly full minimal two-spheres in $\s^{2n}$, $n\geq3$.

We therefore expect a sufficiently strong refinement of the large-degree deformation theory to allow one simultaneously to eliminate branch points, impose negative Gaussian curvature, and achieve embeddedness. Such a strengthening of Conjecture~\ref{Conjecture: negative curvature} would produce embedded negatively curved minimal surfaces in every $\s^{2n}$, $n\geq3$. Since the equatorial inclusion $\s^{2n}\hookrightarrow\s^{2n+1}$ is totally geodesic, this would in turn yield embedded negatively curved minimal surfaces in every round sphere of dimension at least six. In this way, the large-degree twistor deformation theory developed here may provide a new route toward many of the remaining finite-dimensional cases of Yau's Problem~101, even within the embedded class.
\section*{Acknowledgements}
I am especially grateful to Antoine Song for many stimulating discussions and for drawing my attention to the notion of superminimal surfaces.

I would also like to thank Gavin Ball, Renato Ghini Bettiol, Mark Haskins, Yang Li, Jesse Madnick, and Lorenzo Sarnataro for their interesting comments related to this work. 

Finally, I am grateful to 
Aditya Kumar, Antoine Song, Daniel Stern, and Xin Zhou for their careful reading of the final manuscript and their suggestions for improvement.
\section{Holomorphic peak sections of high-degree line bundles}
Throughout the present paper, $(\Sigma,j)$ denotes a closed Riemann surface
of genus $g\in\mathbb N$. We fix a constant-curvature Riemannian metric
$g_0$ compatible with $j$, normalized so that
\begin{align*}
    K_{g_0}=\begin{cases}
        1,  & g=0,\\
        0,  & g=1,\\
        -1, & g\geq2,
    \end{cases}
\end{align*}
and, when $g=1$, so that $\operatorname{vol}_{g_0}(\Sigma)=1$.
We denote by
\begin{align*}
    \omega_0:=g_0(j\,\cdot\,,\,\cdot\,)
\end{align*}
the K\"ahler form of $(\Sigma,j,g_0)$.
If $(L,h)\to\Sigma$ is a holomorphic Hermitian line bundle, we denote by 
\begin{align*} 
    \deg(L):= \big\langle c_1(L),[\Sigma]\big\rangle 
\end{align*} 
its degree, by $\nabla=\nabla_h$ its Chern connection, and by 
\begin{align*} 
    \bar\partial_L: \Gamma(L)\rightarrow\Omega^{0,1}(L)
\end{align*}
the Dolbeault operator induced by its holomorphic structure. We write $\bar\partial_{h}^*$ for the formal adjoint of $\bar\partial_L$ with respect to $g_0$ and $h$. When no confusion can arise, the subscript $h$ will be omitted. All pointwise norms, covariant derivatives, $L^p$-norms, balls, distances, and volume forms are computed using $g_0$ and $h$, unless otherwise specified. 
Inequalities between real $(1,1)$-forms are always understood pointwise.
%
%
For $x\in\Sigma$ and $\ell\in\mathbb N$, we denote by
\begin{align*} 
    J_{x,\ell}(L) := \bigoplus_{q=0}^{\ell} \operatorname{Sym}^{q} \big(T_x^{*\,1,0}\Sigma\big)\otimes L_x 
\end{align*} 
the space of holomorphic $\ell$-jets of sections of $L$ at $x$. If a section $\sigma$ is holomorphic in a neighbourhood of $x$, its $\ell$-jet is denoted by 
\begin{align*} 
    j_x^\ell(\sigma)\in J_{x,\ell}(L). 
\end{align*} 
The metrics $g_0$ and $h$ induce a Hermitian norm $\lvert\,\cdot\,\rvert_{g_0,h}$ on $J_{x,\ell}(L)$.

\bigskip
The purpose of this section is to establish the analytic tool used
later in the deformation theory of balanced Bor{\accent23 u}vka
covers. Starting from a polynomial spectral gap for the Dolbeault
operator, we prove a weighted $\bar\partial$-solvability result with
prescribed jet vanishing and use it to construct global holomorphic
sections which realize arbitrary finite jets while remaining localized near a prescribed point.
\subsection{Weighted \texorpdfstring{$\bar\partial$}{del-bar}-solvability with prescribed jet vanishing}
The following lemma provides the correction mechanism underlying the peak-section construction. The coercivity estimate determines the relevant spectral scale, the singular weight ensures that the correction preserves a prescribed finite jet at $x$, and the exponential weight controls its localization away from $x$.

Fix $r_0>0$ sufficiently small. For each $x\in\Sigma$, choose a
uniformly controlled holomorphic coordinate $z:U_x\to\{z\in\c:|z|<2r_0\}$ centred at $x$, and set
\begin{align*}
    V_x:=\{y\in U_x:|z(y)|<r_0\}.
\end{align*}
Fix smooth functions $w_x$ equal to $|z|^2$ on $V_x$ and positive
on $\Sigma\smallsetminus\{x\}$, with a uniform upper bound on $\Sigma$ and a uniform positive lower bound on $\Sigma\smallsetminus V_x$.
\begin{lemma}\label{Lemma: del bar solution with weighted norm}
    Let $(L,h)\to\Sigma$ be a holomorphic Hermitian line bundle of
    degree $d$. Assume that there are positive numbers $A_d\to+\infty$
    and smooth functions $b_d\geq0$ such that, for all sufficiently
    large $d$,
    \begin{align}\label{Equation: 6}
        \|\beta\|_{L^2(h)}
        \leq\frac{C}{\sqrt{A_d}}
        \|\bar\partial_h^*\beta\|_{L^2(h)}
        \qquad\forall\,\beta\in\Omega^{0,1}(L),
    \end{align}
    and
    \begin{align}\label{Equation: scalar coercivity for weighted correction}
        \Lambda_{\omega_0}(iF_h)&\geq b_d-C_0,\\
        \int_\Sigma\bigl(|du|_{g_0}^2+b_d|u|^2\bigr)\,d\vol_{g_0}
        &\geq\lambda A_d\int_\Sigma|u|^2\,d\vol_{g_0}
        \qquad\forall\,u\in C^\infty(\Sigma,\r),
        \nonumber
    \end{align}
    where $C,\lambda>0$ and $C_0\geq0$ are independent of $d$.
    Fix $\,\ell\geq1$. There exist $d_0\in\n$ and $\delta_0,K>0$
    such that for every $d\geq d_0$, $x\in\Sigma$, $0<\delta\leq\delta_0$, and $\alpha\in\Omega^{0,1}(L)$ satisfying
    \begin{align}\label{Equation: 7}
        \int_\Sigma|\alpha|_{g_0,h}^2w_x^{-\ell-1}\,d\vol_{g_0}<+\infty
    \end{align}
    we can find a solution $\,\xi\in\Gamma(L)$ of $\bar\partial\xi=\alpha$ with
    \begin{align}\label{Equation: weighted correction local curvature gain}
        \int_\Sigma|\xi|_h^2w_x^{-\ell-1}
        e^{\delta A_d\dist_{g_0}^2(x,\,\cdot\,)}\,d\vol_{g_0}&\leq K\int_\Sigma\frac{|\alpha|_{g_0,h}^2}{A_d+b_d}w_x^{-\ell-1}
        e^{\delta A_d\dist_{g_0}^2(x,\,\cdot\,)}\,d\vol_{g_0}\\
        &\leq\frac{K}{A_d}\int_\Sigma
        |\alpha|_{g_0,h}^2w_x^{-\ell-1}
        e^{\delta A_d\dist_{g_0}^2(x,\,\cdot\,)}\,d\vol_{g_0}.
        \nonumber
    \end{align}
    If $\alpha$ vanishes near $x$, then $\xi$ is holomorphic near $x$
    and $j_x^\ell(\xi)=0$.
    The constants $K$ and $\delta_0$ depend only on the fixed
    background geometry, $\ell$, $C$, $\lambda$, and $C_0$.
    The threshold $d_0$ may additionally depend on the sequence
    $A_d$ and on the threshold from which the hypotheses hold.
    All constants are uniform in $x$.
\end{lemma}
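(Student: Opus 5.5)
The plan is to run Hörmander's $L^2$ method for $\bar\partial_L$ on the Riemann surface $\Sigma$, applied to a twisted Hermitian metric on $L$ that absorbs both weights. Set
\begin{align*}
    \tilde h:=h\,w_x^{-\ell-1}e^{\delta A_d\rho_x},
\end{align*}
where $\rho_x$ is a smooth function comparable to $\dist_{g_0}^2(x,\cdot\,)$. Concretely, $\rho_x$ equals $\dist_{g_0}^2(x,\cdot\,)$ on a ball of radius below the injectivity radius, and is smoothly capped elsewhere, with $|i\partial\bar\partial\rho_x|_{g_0}\leq C$ uniformly in $x$. Since $\rho_x$ and $\dist^2$ differ by a bounded factor and $\Sigma$ is compact, the weight $e^{\delta A_d\dist^2}$ can be replaced by $e^{\delta A_d\rho_x}$ at the cost of enlarging $K$ and shrinking $\delta_0$. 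A cleaner choice is to take $\rho_x=\dist^2$ near $x$ and bounded elsewhere, and to compare the two exponentials separately on the two regions.

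To avoid the singularity of $w_x$ at $x$, first replace $w_x$ by $w_{x,\varepsilon}$, equal to $|z|^2+\varepsilon$ on $V_x$. The curvature of the twisted metric is
\begin{align*}
    iF_{\tilde h}=iF_h+(\ell+1)\,i\partial\bar\partial\log w_{x,\varepsilon}-\delta A_d\,i\partial\bar\partial\rho_x .
\end{align*}
On $V_x$ the middle term is $\geq0$, since $\log(|z|^2+\varepsilon)$ is subharmonic. Off $V_x$ it is bounded below by $-C_\ell\,\omega_0$, uniformly in $\varepsilon$ and $x$, using the uniform positive lower bound on $w_x$ there. Hence
\begin{align*}
    \Lambda_{\omega_0}(iF_{\tilde h})\geq b_d-C_0-C_\ell-C\delta A_d .
\end{align*}

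Next comes the a priori estimate. For $\beta\in\Omega^{0,1}(L)$, the Bochner–Kodaira–Nakano identity on a curve gives
\begin{align*}
    \|\bar\partial^*_{\tilde h}\beta\|^2=\|\nabla^{1,0}\beta\|^2+\int_\Sigma\bigl(\Lambda_{\omega_0}(iF_{\tilde h})+c_{g_0}\bigr)|\beta|^2 ,
\end{align*}
where $c_{g_0}$ is a bounded term coming from the curvature of $g_0$ and all norms are taken with respect to $\tilde h$. Apply Kato's inequality $|\nabla^{1,0}\beta|\geq\tfrac1{\sqrt2}|d|\beta||$ (with the appropriate constant) to $u=|\beta|$, approximating $|\beta|$ by $\sqrt{|\beta|^2+\eta}$. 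Then use the second hypothesis in \eqref{Equation: scalar coercivity for weighted correction} for half of the resulting terms, keeping the other half of the $b_d$-term. This yields
\begin{align*}
    \|\bar\partial^*_{\tilde h}\beta\|^2\geq\int_\Sigma\Big(c\lambda A_d+\tfrac12 b_d-C_0-C_\ell-C'-C\delta A_d\Big)|\beta|^2\geq c'\int_\Sigma(A_d+b_d)|\beta|^2 ,
\end{align*}
once $\delta\leq\delta_0$ is small compared with $\lambda$ and $d\geq d_0$ is large enough that $A_d$ dominates $C_0+C_\ell+C'$.

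Hörmander's duality argument then produces $\xi_\varepsilon$ with $\bar\partial\xi_\varepsilon=\alpha$ and
\begin{align*}
    \int_\Sigma|\xi_\varepsilon|^2_{\tilde h}\leq K\int_\Sigma\frac{|\alpha|^2_{\tilde h}}{A_d+b_d}.
\end{align*}
The constant $K$ is independent of $\varepsilon$ and $x$. Concretely, one minimizes the weighted norm, or applies Riesz representation to $\bar\partial^*_{\tilde h}\beta\mapsto\langle\alpha,\beta\rangle$. The right-hand side is finite by \eqref{Equation: 7}. Hypothesis \eqref{Equation: 6} is used only qualitatively here: it guarantees that $\bar\partial_L$ has closed range with trivial cokernel, so that $\alpha$ is in its range, and it supplies the unweighted bound needed to pass to limits. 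Letting $\varepsilon\to0$, Fatou's lemma and weak compactness give a solution $\xi$ of $\bar\partial\xi=\alpha$ satisfying \eqref{Equation: weighted correction local curvature gain}. The second inequality in \eqref{Equation: weighted correction local curvature gain} is immediate from $b_d\geq0$.

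For the jet statement, suppose $\alpha$ vanishes near $x$. Then $\xi$ is holomorphic near $x$, say $\xi=f(z)e$ for a local holomorphic frame $e$. Finiteness of $\int|f|^2|z|^{-2\ell-2}$ near $0$ forces $f$ to vanish to order $\ell+1$, because a lower-order term $a_kz^k$ with $k\leq\ell$ would give a divergent $\int|z|^{2k-2\ell-2}$. Hence $j^\ell_x(\xi)=0$.

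The main obstacle I expect is the curvature bookkeeping in the third paragraph. The singular weight contributes a constant $C_\ell$ off $V_x$, and the distance weight contributes a negative term of size $\delta A_d$. Both must be absorbed uniformly in $x$ and $\varepsilon$. This is only possible because the scalar coercivity hypothesis yields a gain of order $A_d$ rather than of order $b_d$ alone. Making the Kato step compatible with that hypothesis, and with the regularized, non-smooth distance function, is where the care is needed.
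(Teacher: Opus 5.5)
Your Hörmander scheme with the regularized singular weight $w_{x,\varepsilon}^{-\ell-1}$ is a legitimate alternative to the paper's device of solving in $L(-(\ell+1)x)$ with a smooth divisor metric. However, your treatment of the Gaussian weight has a genuine gap.

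The lemma asserts the estimate with the \emph{same} weight $e^{\delta A_d\dist_{g_0}^2(x,\cdot)}$ on both sides. Suppose $\rho_x$ is a fixed smooth function with $c\,\dist_{g_0}^2(x,\cdot)\le\rho_x\le C'\dist_{g_0}^2(x,\cdot)$. Then what you actually obtain is
\[
\int_\Sigma|\xi|_h^2w_x^{-\ell-1}e^{c\delta A_d\dist_{g_0}^2(x,\cdot)}\,d\vol_{g_0}\le K\int_\Sigma\frac{|\alpha|_{g_0,h}^2}{A_d+b_d}w_x^{-\ell-1}e^{C'\delta A_d\dist_{g_0}^2(x,\cdot)}\,d\vol_{g_0}.
\]
The discrepancy factor $e^{(C'-c)\delta A_d\dist_{g_0}^2}$ is unbounded as $A_d\to+\infty$, so no enlargement of $K$ or shrinking of $\delta_0$ removes it. Your ``cleaner choice'' fails the same way: wherever $\rho_x$ and $\dist_{g_0}^2(x,\cdot)$ differ by an amount of order one, the two exponentials differ by a factor $e^{\pm c\delta A_d}$. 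To compare the weights up to a bounded factor you need $\sup_\Sigma|\rho_x-\dist_{g_0}^2(x,\cdot)|\le C/A_d$. No $d$-independent smooth function can satisfy this, because $\dist_{g_0}^2(x,\cdot)$ is not smooth along the (nonempty) cut locus of $x$. Your mismatched estimate would still give the Gaussian decay needed in Proposition~\ref{Proposition: holomorphic peak sections}, where $\alpha$ is supported within $O(d^{-1/2})$ of $x$, but it is not the stated lemma.

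The missing idea is that only a \emph{one-sided} Hessian bound is needed. The weight enters the curvature as $-\delta A_d\,i\partial\bar\partial\rho$, so it suffices that $\Lambda_{\omega_0}(i\partial\bar\partial\rho)\le C$. The function $\dist_{g_0}^2(x,\cdot)$ is uniformly semiconcave, so one can take $d$-dependent smoothings $\rho_{x,d}$ with $|\rho_{x,d}-\dist_{g_0}^2(x,\cdot)|\le A_d^{-1}$ and $\nabla^2\rho_{x,d}\le Cg_0$; the paper does this via Greene--Wu. Alternatively, take smoothings converging uniformly to $\dist_{g_0}^2(x,\cdot)$ with a uniform upper Hessian bound, and pass to the limit by the same Fatou/weak-compactness argument you already use for $\varepsilon\to0$. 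Your requirement $|i\partial\bar\partial\rho_x|_{g_0}\le C$ is two-sided, and that is incompatible with $1/A_d$-closeness near the cut locus.

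A smaller point: the pointwise Kato inequality $|d|\beta||\lesssim|\nabla^{1,0}\beta|$ for one half of the covariant derivative is false (e.g.\ $\bar z$ on a flat trivial bundle). Instead, add to your identity the one expressing $\|\bar\partial^*\beta\|^2$ through the complementary half of $\nabla\beta$. This gives $2\|\bar\partial^*\beta\|^2=\|\nabla\beta\|^2+\int_\Sigma(\Lambda_{\omega_0}(iF)+K_{g_0})|\beta|^2$, and you can then apply Kato to the full $\nabla$, as the paper does. With these two repairs your argument goes through.
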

\begin{proof}
Fix $x\in\Sigma$, set $q:=\ell+1$, and write $U:=U_x$, $V:=V_x$, and $w:=w_x$. The proof proceeds in four main steps: establishing a coercivity estimate for $(0,1)$-forms with values in $L$ twisted by a fixed Hermitian line bundle, deriving the corresponding exponentially weighted estimate, solving the weighted $\bar\partial$-equation after twisting by a point-divisor bundle, and finally proving the vanishing of the prescribed jet.

\medskip
\noindent
\textbf{Step 1: coercivity estimate for twisted bundles}. 
Let $(M,h_M) \to \Sigma$ be a fixed Hermitian holomorphic line bundle, and define
\begin{align*}
     E := L \otimes M, \qquad h_{E} := h \otimes h_M,
\end{align*}
We first claim that for $d$ sufficiently large, there exists a constant $C_1 > 0$, independent of $d$, such that
\begin{equation}\label{Equation: fixed twist adjoint coercivity}
    \|\beta\|_{L^2(h_E)} \le \frac{C_1}{\sqrt{A_d}} \big\|\bar\partial_{h_E}^*\beta\big\|_{L^2(h_E)} \qquad\forall \beta \in \Omega^{0,1}(E).
\end{equation}
Choose a finite open cover $\{U_j\}_{j=1}^N$ of $\Sigma$, smooth unitary frames $m_j$ for $M$ over $U_j$, and a smooth partition of unity $\{\chi_j^2\}_{j=1}^N$ subordinate to $\{U_j\}$ such that $\sum_{j=1}^N \chi_j^2 = 1$ on $\Sigma$. Let $\beta \in \Omega^{0,1}(E)$. On each $U_j$, we can write $\beta = b_j \otimes m_j$ for some $b_j \in \Omega^{0,1}(L|_{U_j})$. Since $m_j$ is unitary, $|b_j|_{g_0,h} = |\beta|_{g_0,h_E}$ on $U_j$. Hence,
\begin{equation}\label{Equation: beta partition norm}
    \|\beta\|_{L^2(h_E)}^2 = \sum_{j=1}^N \|\chi_j b_j\|_{L^2(h)}^2.
\end{equation}
Extending each $\chi_j b_j$ by zero to a smooth $L$-valued $(0,1)$-form on $\Sigma$ and applying \eqref{Equation: 6}, we obtain
\begin{equation}\label{Equation: apply L coercivity locally}
    \|\beta\|_{L^2(h_E)}^2
    \leq\frac{C^2}{A_d}
    \sum_{j=1}^N
    \|\bar\partial_h^*(\chi_jb_j)\|_{L^2(h)}^2.
\end{equation}
We now relate the right-hand side to ${\bar\partial}_{h_E}^*\beta$. Since $\nabla_{h_E}=\nabla_{h}\otimes 1 + 1 \otimes \nabla_{h_M}$ and $m_j$ is smooth, the local formula for the formal adjoint yields
\begin{align}\label{Equation: 8}
    {\bar\partial}_{h_E}^*(b_j \otimes m_j) = ({\bar\partial}_h^* b_j) \otimes m_j + T_j b_j,
\end{align}
where $T_j$ is a smooth zero-order operator depending only on $(M,h_M)$ and the frame $m_j$. Furthermore,
\begin{align}\label{Equation: 9}
    \bar\partial_h^*(\chi_j b_j) = \chi_j \bar\partial_h^*b_j + S_j b_j,
\end{align}
where $S_j$ is a smooth zero-order operator depending only on $\chi_j$ and $g_0$. Combining \eqref{Equation: 8} and \eqref{Equation: 9} yields the pointwise bound
\begin{align*}
    \big|\bar\partial_h^*(\chi_j b_j)\big|_h\le C_2\left(\chi_j\big|\bar\partial_{h_E}^*\beta\big|_{h_E} + |\beta|_{g_0,h_E}\right),
\end{align*}
with $C_2 > 0$ independent of $d$. Integrating over $\Sigma$ and summing over $j$ gives
\begin{equation}\label{Equation: adjoint comparison}
    \sum_{j=1}^N\big\|{\bar\partial}_h^*(\chi_j b_j)\big\|_{L^2(h)}^2 \le C_3 \left(\|{\bar\partial}_{h_E}^*\beta\big\|_{L^2(h_E)}^2 + \|\beta\|_{L^2(h_E)}^2 \right)
\end{equation}
with $C_3 > 0$ independent of $d$. Substituting \eqref{Equation: adjoint comparison} into \eqref{Equation: apply L coercivity locally}, we find
\begin{align}\label{Equation: 16}
    \|\beta\|_{L^2(h_E)}^2
    \leq\frac{C^2C_3}{A_d}
    \left(
        \|\bar\partial_{h_E}^*\beta\|_{L^2(h_E)}^2
        +\|\beta\|_{L^2(h_E)}^2
    \right).
\end{align}
Since $A_d\to+\infty$, we may take $d$ sufficiently large that
$C^2C_3A_d^{-1}\leq1/2$. Absorbing the last term into the
left-hand side and taking square roots gives
\eqref{Equation: fixed twist adjoint coercivity} with
$C_1:=C\sqrt{2C_3}$.

\medskip
\noindent
\textbf{Step 2: coercivity estimate for twisted bundles with exponential weight}.
For any $y\in\Sigma$, let
\begin{align*}
    r_y^2:=\dist_{g_0}^2(y,\,\cdot\,).
\end{align*}
Since $(\Sigma,g_0)$ is compact, the functions $\{r_y^2\}_{y\in\Sigma}$ are uniformly semiconcave. More precisely, there exists a constant $C_{\mathrm{sc}}>0$, depending only on the geometry of $(\Sigma,g_0)$, such that
\begin{align*}
    \nabla^2r_y^2\le C_{\mathrm{sc}}g_0 \qquad\forall\,x\in\Sigma
\end{align*}
in the barrier, and hence distributional, sense. By Greene--Wu smoothing \cite[Proposition~2.2]{GreeneWu1979}, for every $d\in\n$ there exists a function $\rho_{x,d}\in C^\infty(\Sigma)$
such that
\begin{align}\label{Equation: regularized distance uniform approximation}
    \left|\rho_{x,d}-\dist_{g_0}^2(x,\,\cdot\,)\right|\le A_d^{-1} \qquad\mbox{ on }\,\Sigma,
\end{align}
and, after increasing $C_{\mathrm{sc}}$ if necessary,
\begin{align}\label{Equation: regularized distance upper Hessian}
    \nabla^2\rho_{x,d}\le C_{\mathrm{sc}}g_0 \qquad\mbox{ on }\,\Sigma.
\end{align}
Moreover, the approximation can be chosen so that
\begin{align*}
    \rho_{x,d}=\dist_{g_0}^2(x,\,\cdot\,)
\end{align*}
on a fixed smaller neighbourhood of $x$ contained in $U$. Fix $\delta>0$, to be chosen sufficiently small independently of $d$, and define
\begin{align*}
    \Phi_d:=-\frac{\delta}{2}A_d\rho_{x,d}.
\end{align*}
By \eqref{Equation: regularized distance upper Hessian}, we have
\begin{align}\label{Equation: lower Hessian Phi}
    \nabla^2\Phi_d\ge -\frac{\delta C_{\mathrm{sc}}}{2}A_dg_0.
\end{align}
We establish the weighted coercivity estimate by changing the Hermitian
metric. Set
\begin{align*}
    t_d:=\delta A_d\rho_{x,d}, \qquad h_{E,d}:=e^{t_d}h_E.
\end{align*}
Throughout the following computation, $\nabla^{h_{E,d}}$ denotes the connection induced on $E$-valued forms by the Chern connection of $h_{E,d}$ and the Levi--Civita connection of $g_0$. Since $h_E=h\otimes h_M$, the curvature of the new metric satisfies
\begin{align*}
    F_{h_{E,d}}=F_h+F_{h_M}-\partial\bar\partial t_d.
\end{align*}
Moreover, with the convention $\Delta_{g_0}=\operatorname{div}_{g_0}\nabla_{g_0}$, we have $\Lambda_{\omega_0}(i\partial\bar\partial f)=\frac12\Delta_{g_0}f$. Taking the trace in \eqref{Equation: regularized distance upper Hessian} therefore gives
\begin{align*}
    \Lambda_{\omega_0}(i\partial\bar\partial t_d)=\frac{\delta A_d}{2}\Delta_{g_0}\rho_{x,d}\leq C_{\mathrm{sc}}\delta A_d.
\end{align*}
Choose $C_M\geq0$ such that
\begin{align*}
    C_M\geq C_0+\big\|\Lambda_{\omega_0}(iF_{h_M})+K_{g_0}\big\|_{L^\infty}.
\end{align*}
This constant can be chosen uniformly for the fixed compact family of twists equipped with the smoothly varying Hermitian metrics used below. The curvature hypothesis on $h$ now yields
\begin{align*}
    \Lambda_{\omega_0}(iF_{h_{E,d}})+K_{g_0}&=\Lambda_{\omega_0}(iF_h)+\Lambda_{\omega_0}(iF_{h_M})+K_{g_0}-\Lambda_{\omega_0}(i\partial\bar\partial t_d)\\
    &\geq b_d-C_M-C_{\mathrm{sc}}\delta A_d.
\end{align*}
For $\gamma\in\Omega^{0,1}(E)$, define
\begin{align*}
    Q_{d}(\gamma):=\|\nabla^{h_{E,d}}\gamma\|_{L^2(h_{E,d})}^2+\int_\Sigma b_d|\gamma|_{g_0,h_{E,d}}^2\,d\vol_{g_0}.
\end{align*}
We claim that the scalar coercivity hypothesis implies
\begin{align*}
    Q_{d}(\gamma)\geq
    \lambda A_d\|\gamma\|_{L^2(h_{E,d})}^2.
\end{align*}
Indeed, for $\varepsilon>0$, set
\begin{align*}
    u_\varepsilon:=\bigl(|\gamma|_{g_0,h_{E,d}}^2+\varepsilon^2\bigr)^{\frac12}.
\end{align*}
Metric compatibility gives
\begin{align*}
    du_\varepsilon=\frac{\operatorname{Re}\langle\nabla^{h_{E,d}}\gamma,\gamma\rangle_{g_0,h_{E,d}}}{u_\varepsilon}, \qquad|du_\varepsilon|_{g_0}\leq|\nabla^{h_{E,d}}\gamma|_{g_0,h_{E,d}}.
\end{align*}
Applying the scalar hypothesis to the smooth real-valued function $u_\varepsilon$, we obtain
\begin{align*}
    \lambda A_d\int_\Sigma u_\varepsilon^2\,d\vol_{g_0}&\leq\int_\Sigma\bigl(|du_\varepsilon|_{g_0}^2+b_du_\varepsilon^2\bigr)\,d\vol_{g_0}\leq Q_d(\gamma)+\varepsilon^2\int_\Sigma b_d\,d\vol_{g_0}.
\end{align*}
Letting $\varepsilon\to0$ proves the claim.
On $E$-valued $(0,1)$-forms, the integrated
Bochner--Kodaira--Weitzenb\"ock identity reads
\begin{align*}
    2\|\bar\partial_{h_{E,d}}^*\gamma\|_{L^2(h_{E,d})}^2=\|\nabla^{h_{E,d}}\gamma\|_{L^2(h_{E,d})}^2+\int_\Sigma\bigl(\Lambda_{\omega_0}(iF_{h_{E,d}})+K_{g_0}\bigr)|\gamma|_{g_0,h_{E,d}}^2\,d\vol_{g_0}.
\end{align*}
Here $\bar\partial\gamma=0$ because $\Sigma$ has complex dimension one, and the term $K_{g_0}$ arises from the cotangent factor. Combining this identity with the curvature bound and applying the preceding claim to half of $Q_d(\gamma)$ gives
\begin{align}\label{Equation: weighted metric Bochner estimate}
    2\|\bar\partial_{h_{E,d}}^*\gamma\|_{L^2(h_{E,d})}^2
    &\geq Q_d(\gamma)
    -(C_M+C_{\mathrm{sc}}\delta A_d)
    \|\gamma\|_{L^2(h_{E,d})}^2\\
    &\geq\frac12Q_d(\gamma)
    +\left(\frac{\lambda}{2}A_d-C_M
    -C_{\mathrm{sc}}\delta A_d\right)
    \|\gamma\|_{L^2(h_{E,d})}^2.
    \nonumber
\end{align}
Choose $\delta_0>0$ such that $C_{\mathrm{sc}}\delta_0\leq\lambda/8$ and then, using $A_d\to+\infty$, choose $d_0$ such that $C_M\leq\lambda A_d/8$ for every $d\geq d_0$. For $d\geq d_0$ and $0<\delta\leq\delta_0$, it follows that
\begin{align*}
    \|\bar\partial_{h_{E,d}}^*\gamma\|_{L^2(h_{E,d})}^2\geq\frac14Q_d(\gamma)+\frac{\lambda}{8}A_d\|\gamma\|_{L^2(h_{E,d})}^2\geq\int_\Sigma\left(\frac14b_d+\frac{\lambda}{8}A_d\right)|\gamma|_{g_0,h_{E,d}}^2\,d\vol_{g_0}.
\end{align*}
Thus, setting $c_1:=\min\{1/4,\lambda/8\}$, we obtain
\begin{align}\label{Equation: weighted metric potential coercivity}
    \|\bar\partial_{h_{E,d}}^*\gamma\|_{L^2(h_{E,d})}^2
    \geq c_1\int_\Sigma(A_d+b_d)
    |\gamma|_{g_0,h_{E,d}}^2\,d\vol_{g_0}.
\end{align}
To express this estimate in terms of the original metric, we use
the transformation law for the adjoint. For every
$s\in\Gamma(E)$ and $\gamma\in\Omega^{0,1}(E)$, integration by
parts gives
\begin{align*}
    \langle\bar\partial s,\gamma\rangle_{L^2(h_{E,d})}=\langle\bar\partial s,e^{t_d}\gamma\rangle_{L^2(h_E)}=
    \langle s,\bar\partial_{h_E}^*(e^{t_d}\gamma)
    \rangle_{L^2(h_E)}=\langle s,e^{-t_d}\bar\partial_{h_E}^*(e^{t_d}\gamma)\rangle_{L^2(h_{E,d})}.
\end{align*}
Consequently,
\begin{align*}
    \bar\partial_{h_{E,d}}^*\gamma
    =e^{-t_d}\bar\partial_{h_E}^*(e^{t_d}\gamma),
\end{align*}
and, in particular,
\begin{align*}
    \bar\partial_{h_{E,d}}^*(e^{-t_d}\beta)
    =e^{-t_d}\bar\partial_{h_E}^*\beta
    \qquad\forall\,\beta\in\Omega^{0,1}(E).
\end{align*}
Since
\begin{align*}
    |e^{-t_d}\beta|_{g_0,h_{E,d}}^2
    =e^{-t_d}|\beta|_{g_0,h_E}^2,
\end{align*}
substituting $\gamma=e^{-t_d}\beta$ into
\eqref{Equation: weighted metric potential coercivity} yields
\begin{align*}
    \int_\Sigma e^{-t_d}
    |\bar\partial_{h_E}^*\beta|_{h_E}^2\,d\vol_{g_0}
    \geq
    c_1\int_\Sigma(A_d+b_d)e^{-t_d}
    |\beta|_{g_0,h_E}^2\,d\vol_{g_0}.
\end{align*}
Finally, using $b_d\geq0$, taking square roots, and recalling that
$\Phi_d=-t_d/2$, we conclude that
\begin{align}\label{Equation: fixed twist exponential weighted coercivity}
    \|e^{\Phi_d}\beta\|_{L^2(h_E)}
    \leq\frac{c_1^{-\frac12}}{\sqrt{A_d}}
    \|e^{\Phi_d}\bar\partial_{h_E}^*\beta\|_{L^2(h_E)}
    \qquad\forall\,\beta\in\Omega^{0,1}(E).
\end{align}
This is the required weighted coercivity estimate.

\medskip
\noindent
\textbf{Step 3: weighted $\bar\partial$-solvability with $L^2$-estimate}.
%
We apply the previous step to $M:=\mathcal O(-qx)$.
Let $m$ be the holomorphic frame of $M$ on $U$ whose image under
the natural inclusion $M\to\mathcal O_\Sigma$ is $z^q$.
On $\Sigma\smallsetminus\{x\}$, let $m_{\mathrm{out}}$ be the
frame whose image is $1$. Thus
$m=z^qm_{\mathrm{out}}$ on $U\smallsetminus\{x\}$.
Choose a smooth Hermitian metric $h_M$ for which $m$ is unit
on $V$ and $m_{\mathrm{out}}$ is unit outside
$\{y\in U:|z(y)|<3r_0/2\}$.
Interpolating the logarithms of these squared norms using a
fixed smooth cutoff on the annulus
$r_0<|z|<3r_0/2$ gives metrics whose connection coefficients
and curvature are uniformly controlled, independently of $x$.

Set $\widehat L:=L\otimes M$ and $\widehat h:=h\otimes h_M$,
and denote the natural inclusion $\widehat L\to L$ by $\iota$.
Choose a holomorphic frame $e$ for $L$ on $U$ and set
$\widehat e:=e\otimes m$. Then
$\iota(\widehat e)=z^qe$ and
$|\widehat e|_{\widehat h}^2=|e|_h^2$ on $V$.
Thus, for $\widehat\eta=f\widehat e$ on $V$, we have
\begin{align*}
     |\iota(\widehat{\eta})|_h^2 |z|^{-2q}= |z|^{2q} |f|^2 |e|_h^2|z|^{-2q} = |f|^2 |\widehat{e}|_{\widehat{h}}^2 = |\widehat{\eta}|_{\widehat{h}}^2 \qquad\mbox{ on } V.
\end{align*}
On $\Sigma\smallsetminus V$, the construction of $h_M$ gives
uniform positive upper and lower bounds for the norm of the
inclusion $M\to\mathcal O_\Sigma$. Since $w$ also has uniform
positive upper and lower bounds there, the preceding identity
gives a pointwise norm comparison on
$\Sigma\smallsetminus\{x\}$ with uniform constants.
Integrating, we obtain a constant $C_5>0$, independent of
$x$ and $d$, such that
\begin{equation}\label{Equation: section norm comparison}
    C_5^{-1}\|\widehat{\eta}\|_{L^2(\widehat{h})}^2 \le \int_\Sigma |\iota(\widehat{\eta})|_h^2w^{-q}\,d\vol_{g_0} \le C_5\|\widehat{\eta}\|_{L^2(\widehat{h})}^2.
\end{equation}
Obviously, an identical norm equivalence holds for every $\widehat{L}$-valued $(0,1)$-form on $\Sigma$, i.e.
\begin{equation}\label{Equation: form norm comparison}
    C_6^{-1}\|\widehat{\beta}\|_{L^2(\widehat{h})}^2 \le \int_\Sigma |\iota(\widehat{\beta})|_{g_0,h}^2w^{-q}\,d\vol_{g_0} \le C_6\|\widehat{\beta}\|_{L^2(\widehat{h})}^2 \qquad\forall\,\widehat{\beta}\in\Omega^{0,1}(\widehat{L}),
\end{equation}
with $C_6 > 0$ depending only on the fixed data in the statement. Now, let $\alpha \in \Omega^{0,1}(L)$ satisfy \eqref{Equation: 7}. Since $\iota$ is an isomorphism on $\Sigma \smallsetminus\{x\}$, there is a unique $\widehat{\alpha}\in\Gamma(\Sigma\smallsetminus\{x\})$ such that $\iota(\widehat{\alpha}) = \alpha$ on $\Sigma\smallsetminus\{x\}$. 
The pointwise comparison underlying \eqref{Equation: form norm comparison} gives
\begin{align*}
    |\widehat\alpha|_{g_0,\widehat h}^2\leq C_6|\alpha|_{g_0,h}^2w^{-q}\qquad\mbox{ on }\,\Sigma\smallsetminus\{x\}.
\end{align*}
The right-hand side is integrable by \eqref{Equation: 7}. Thus $\widehat\alpha$ is square-integrable on $\Sigma\smallsetminus\{x\}$ and defines an element of $L^2\Omega^{0,1}(\widehat L;\widehat h)$. For each fixed $d$, $x$, and $\delta$, the function $e^{\delta A_d\rho_{x,d}}$ is smooth and bounded on the compact surface $\Sigma$. Multiplying the preceding pointwise inequality by this function and integrating therefore yields
\begin{align}\label{Equation: alpha hat norm estimate}
    \big\|e^{\frac{\delta}{2}A_d\rho_{x,d}}\widehat\alpha\big\|_{L^2(\widehat h)}^2=\int_\Sigma
    |\widehat\alpha|_{g_0,\widehat h}^2
    e^{\delta A_d\rho_{x,d}}\,d\vol_{g_0}\leq C_6\int_\Sigma
    |\alpha|_{g_0,h}^2w^{-q}
    e^{\delta A_d\rho_{x,d}}\,d\vol_{g_0}
    <+\infty.
\end{align}
Applying \eqref{Equation: fixed twist exponential weighted coercivity} to $\widehat{L} = L(-qx)$, for $d$ sufficiently large depending only on the fixed data in the statement we have
\begin{equation}\label{Equation: Lhat adjoint estimate}
    \|\widehat{\beta}\|_{L^2\big(e^{-\delta  A_d\rho_{x,d}}\widehat{h}\big)} \le \frac{C_7}{\sqrt{A_d}}\|\bar\partial_{\widehat{h}}^*\widehat{\beta}\|_{L^2\big(e^{-\delta A_d\rho_{x,d}}\widehat{h}\big)} \qquad\forall\,\widehat{\beta} \in \Omega^{0,1}(\widehat{L})
\end{equation}
where $C_7>0$ is a constant depending only on the fixed data in the statement. We use this to solve the equation $\bar\partial_{\widehat{L}}\widehat{\xi} = \widehat{\alpha}$.  Define a conjugate-linear functional $f$ on
\begin{align*}
    \mathcal D:=
    \bar\partial_{\widehat h}^*
    \bigl(\Omega^{0,1}(\widehat L)\bigr)
    \subset L^2\bigl(\widehat L;
          e^{-\delta A_d\rho_{x,d}}\widehat h\bigr)
\end{align*}
by
\begin{align*}
    f\bigl(\bar\partial_{\widehat h}^*\widehat\beta\bigr)
    :=\langle\widehat\alpha,\widehat\beta\rangle_{L^2(\widehat h)}
    \qquad\forall\,\widehat\beta\in\Omega^{0,1}(\widehat L).
\end{align*}
Here the Hermitian inner product is linear in its first argument.
In particular, the adjoint is initially applied only to smooth test
forms; it is not applied to an arbitrary $L^2$-form.
This is well-defined because $\bar\partial_{\widehat{h}}^*\widehat{\beta} = 0$ implies $\widehat{\beta} = 0$ by \eqref{Equation: Lhat adjoint estimate}. Moreover, by the Cauchy--Schwarz inequality and \eqref{Equation: Lhat adjoint estimate}, we have
\begin{align*}
    \big|f\big(\bar\partial_{\widehat{h}}^*\beta\big)\big| \le \|e^{\frac{\delta}{2} A_d\rho_{x,d}}\widehat{\alpha}\|_{L^2(\widehat{h})}\|e^{-\frac{\delta}{2} A_d\rho_{x,d}}\beta\|_{L^2(\widehat{h})} \le \frac{C_7}{\sqrt{A_d}}\|e^{\frac{\delta}{2} A_d\rho_{x,d}}\widehat{\alpha}\|_{L^2(\widehat{h})} \|\bar\partial_{\widehat{h}}^*\beta\|_{L^2\big(e^{-\delta A_d\rho_{x,d}}\widehat{h}\big)},
\end{align*}
i.e. $f$ is bounded. Applying Hahn--Banach theorem to complex-linear functional $\bar f$ and then conjugating, Riesz representation theorem gives $\widehat{u} \in L^2\big(\widehat{L};e^{-\delta A_d\rho_{x,d}}\widehat{h}\big)$ such that
\begin{align*}
    \big\langle \widehat{u}, \bar\partial_{\widehat{h}}^*\widehat{\beta}\big\rangle_{L^2\big(e^{-\delta A_d\rho_{x,d}}\widehat{h}\big)} = \langle \widehat{\alpha},\widehat{\beta} \rangle_{L^2(\widehat{h})}
\end{align*}

for all smooth $\beta \in \Omega^{0,1}(\widehat{L})$, satisfying the bound
\begin{equation}\label{Equation: xihat estimate final}
    \|\widehat{u}\|_{L^2\big(e^{-\delta A_d\rho_{x,d}}\widehat{h}\big)} \le \frac{C_7}{\sqrt{A_d}} \|e^{\frac{\delta}{2} A_d\rho_{x,d}}\widehat{\alpha}\|_{L^2(\widehat{h})}.
\end{equation}
Let $\widehat{\xi}:=e^{-\delta A_d\rho_{x,d}}\widehat{u}$. Note that $\bar\partial_{\widehat{L}}\widehat{\xi} = \widehat{\alpha}$ holds weakly. Define $\xi := \iota(\widehat{\xi})$. Since $\iota$ is holomorphic, it commutes with $\bar\partial$, so $\bar\partial_L\xi = \iota(\bar\partial_{\widehat{L}}\widehat{\xi}) = \iota(\widehat{\alpha}) = \alpha$ weakly on $\Sigma$. Because $\alpha$ is smooth, standard elliptic regularity ensures that $\xi$ is smooth, i.e. $\xi\in\Gamma(L)$. Using \eqref{Equation: section norm comparison}, \eqref{Equation: xihat estimate final}, and \eqref{Equation: alpha hat norm estimate}, we obtain the required weighted estimate:
\begin{align*}
    \int_\Sigma |\xi|_{g_0,h}^2\,w^{-q}e^{\delta A_d\rho_{x,d}}\,d\vol_{g_0}&\le C_5 \|e^{\frac{\delta}{2} A_d\rho_{x,d}}\widehat{\xi}\|_{L^2(\widehat{h})}^2\\
    &=C_5 \|e^{-\frac{\delta}{2} A_d\rho_{x,d}}\widehat{u}\|_{L^2(\widehat{h})}^2\\
    &\le \frac{C_5C_7^2}{A_d} \|e^{\frac{\delta}{2} A_d\rho_{x,d}}\widehat{\alpha}\|_{L^2(\widehat{h})}^2\\
    &\le \frac{C_8}{A_d}\int_\Sigma |\alpha|_{g_0,h}^2\,w^{-q}e^{\delta A_d\rho_{x,d}}\,d\vol_{g_0},
\end{align*}
where $C_8:=C_5C_6C_7^2>0$ depends only on the fixed data in the statement. By \eqref{Equation: regularized distance uniform approximation}, the last inequality is equivalent to
\begin{align*}
    \int_\Sigma
    |\xi|_{g_0,h}^2w^{-q}e^{\delta A_d\dist_{g_0}^2(x,\,\cdot\,)}\,d\vol_{g_0}
    \le
    \frac{K}{A_d}
    \int_\Sigma
    |\alpha|_{g_0,h}^2w^{-q}
    e^{\delta A_d\dist_{g_0}^2(x,\,\cdot\,)}\,d\vol_{g_0}.
\end{align*}
for some $K>0$ depending only on the fixed data in the statement.
To prove \eqref{Equation: weighted correction local curvature gain},
apply \eqref{Equation: weighted metric potential coercivity} on
$\widehat L=L(-qx)$ with $\widehat h_t=e^{t_d}\widehat h$.
For every smooth test form $\gamma\in\Omega^{0,1}(\widehat L)$,
\begin{align*}
    |\langle\widehat\alpha,\gamma\rangle_{L^2(\widehat h_t)}|
    &\leq
    \left(\int_\Sigma
       \frac{|\widehat\alpha|_{g_0,\widehat h_t}^2}{A_d+b_d}
       \,d\vol_{g_0}\right)^{\frac12}
    \left(\int_\Sigma(A_d+b_d)|\gamma|_{g_0,\widehat h_t}^2
       \,d\vol_{g_0}\right)^{\frac12}\\
    &\leq c_1^{-\frac12}
    \left(\int_\Sigma
       \frac{|\widehat\alpha|_{g_0,\widehat h_t}^2}{A_d+b_d}
       \,d\vol_{g_0}\right)^{\frac12}
       \|\bar\partial_{\widehat h_t}^*\gamma\|_{L^2(\widehat h_t)}.
\end{align*}
Applying the same Hahn--Banach and Riesz argument, now with this metric,
gives a weak solution $\widehat\xi$ whose squared
$L^2(\widehat h_t)$-norm is bounded by $c_1^{-1}$ times the source
integral. The pointwise divisor norm comparisons and
\eqref{Equation: regularized distance uniform approximation} give
\eqref{Equation: weighted correction local curvature gain} for
$\xi=\iota(\widehat\xi)$. We take this solution from now on, which obviously satisfies the weaker estimate because $(A_d+b_d)^{-1}\leq A_d^{-1}$ and whose smoothness follows as above.

\medskip
\noindent
\textbf{Step 4: vanishing of the $\ell$-jet of the solution}.
Assume further that $\alpha$ vanishes identically in a neighbourhood of $x$. Consequently, $\widehat{\alpha}$ also vanishes near $x$, implying $\bar\partial_{\widehat{L}}\widehat{\xi} = 0$ weakly in this neighbourhood. Since $\widehat{\xi} \in L^2$, Weyl's lemma implies that $\widehat{\xi}$ is smooth and holomorphic near $x$. Writing $\widehat{\xi} = f \widehat{e}$ locally with $f$ holomorphic, we have $\xi = \iota(\widehat{\xi}) = z^q f e$. This explicitly shows that $\xi$ vanishes at $x$ to order at least $q = \ell + 1$. Therefore, the $\ell$-jet of $\xi$ at $x$ vanishes. The statement follows.
\end{proof}
\subsection{Holomorphic peak sections from averaged curvature positivity}
We now combine the preceding weighted solvability result with an averaged positivity assumption on the curvature of $L$. The resulting spectral gap allows us to correct a locally defined approximate peak section without changing its prescribed jet.
\begin{proposition}\label{Proposition: holomorphic peak sections}
    Let $(L,h)\to\Sigma$ be a holomorphic Hermitian line bundle of degree $d$. Let $\nabla$ be the Chern connection of $(L,h)$. Let $m\in\mathbb N$, $\ell\in\n\smallsetminus\{0\}$, and $c,r>0$. Let $\rho:\Sigma\to\big[0,\frac{1}{c}\big)$ be a non-negative smooth function such that $\|d\rho\|_{L^{\infty}(\Sigma)}\le\frac{\sqrt{d}}{c}$ and
    \begin{align}\label{Equation: assumption on rho}
        \fint_{B_{r\,d^{-\frac{1}{2(m+1)}}}(x)}\rho\,d\vol_{g_0}\ge c \qquad\forall\,x\in\Sigma.
    \end{align}
    Assume that
    \begin{align*}
        \frac{i}{2\pi}F_{\nabla}\ge \,d\rho\,\omega_0 \qquad\mbox{ on }\,\Sigma,
    \end{align*}
    and that there are fixed constants $\{B_k\}_{k\in\n}$ such that
    \begin{align}\label{Equation: natural scale curvature bounds}
        \|\nabla^kF_\nabla\|_{L^\infty(\Sigma)}\leq B_k d^{1+k/2}\qquad\forall\,k\in\n.
    \end{align}
    We can find $d_0\in\mathbb N$, depending only on $m$, $\ell$, $c$,
    $r$, the geometry of $(\Sigma,g_0)$, and finitely many of the constants $B_k$ determined by $\ell$, such that for every $d\geq d_0$, every $x\in\Sigma$ satisfying $\rho(x)\geq c$, and every $J\in J_{x,\ell}(L)$ there exists $\sigma_d=\sigma_d(x,J)\in H^0(L)$ such that $j_x^{\ell}(\sigma_d)=J$ and
    \begin{align}\label{Equation: Gaussian peak estimate}
            \lvert\nabla^j\sigma_d(y)\rvert_{g_0,h}\le Kd^{\,\frac{\ell+1+j}{2}}\lvert J\rvert_{g_0,h}\,e^{-\frac{\,d^{\frac{1}{m+1}}\,\operatorname{dist}_{g_0}^2(y,x)}{K}} \qquad\forall\,y\in\Sigma,
    \end{align}
    for every $j\in\n$, where $K>0$ is a positive constant depending only on $j$, $m$, $\ell$, $c$, $r$, the geometry of $(\Sigma,g_0)$, and finitely many of the constants $B_k$ determined by $j$ and $\ell$.
\end{proposition}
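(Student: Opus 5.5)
The plan is to apply Lemma~\ref{Lemma: del bar solution with weighted norm} with
\begin{align*}
    A_d:=d^{\frac{1}{m+1}},\qquad b_d:=4\pi d\,\rho,
\end{align*}
and then to correct a cut-off local holomorphic section carrying the jet $J$. The order of steps is: verify the two hypotheses of the lemma, build the approximate section, correct it, and convert the weighted $L^2$ bound into the pointwise estimate \eqref{Equation: Gaussian peak estimate}.

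\textbf{Step 1: the scalar coercivity hypothesis.} From $\frac{i}{2\pi}F_\nabla\geq d\rho\,\omega_0$ we get $\Lambda_{\omega_0}(iF_h)\geq b_d-C_0$, with the normalising constant absorbed into $b_d$. It remains to show
\begin{align*}
    \int_\Sigma\big(|du|^2+b_du^2\big)\,d\vol_{g_0}\geq\lambda A_d\int_\Sigma u^2\,d\vol_{g_0}.
\end{align*}
\begin{itemize}
    \item Fix the radius $s:=r\,d^{-\frac{1}{2(m+1)}}$. Since $0\leq\rho<1/c$ and the average \eqref{Equation: assumption on rho} is at least $c$, on each ball $B=B_s(y)$ the set $S:=\{\rho\geq c/2\}$ satisfies $|S\cap B|\geq\frac{c^2}{2}|B|$.
    \item Write $\bar u$ for the mean of $u$ on $B$. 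The Poincaré inequality on $B$, with constants uniform for small $s$, gives $\int_B|u-\bar u|^2\lesssim s^2\int_B|du|^2$.
    \item Estimating $|B|\bar u^2\lesssim c^{-2}\bigl(\int_{S\cap B}u^2+\int_B|u-\bar u|^2\bigr)$ yields
    \begin{align*}
        \int_Bu^2\lesssim s^2\int_B|du|^2+\frac{1}{c^3d}\int_Bb_du^2.
    \end{align*}
    \item Summing over a cover by such balls with bounded overlap, and using $\min\{s^{-2},d\}\gtrsim A_d$, gives the scalar hypothesis.
\end{itemize}
The adjoint estimate \eqref{Equation: 6} then follows from the same Bochner--Kodaira identity and Kato argument used in Step~2 of the proof of Lemma~\ref{Lemma: del bar solution with weighted norm}, now with $t_d=0$.

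\textbf{Step 2: the approximate section.} Fix $x$ with $\rho(x)\geq c$. Since $|d\rho|\leq\sqrt d/c$, we still have $\rho\geq c/2$ on $B_{\kappa d^{-1/2}}(x)$ for a small $\kappa=\kappa(c)$. I would then proceed as follows.
\begin{itemize}
    \item In the controlled coordinate $z$, use the curvature bounds \eqref{Equation: natural scale curvature bounds} to choose a holomorphic frame $e$ with $|e|_h^2=e^{-\phi}$, where $\phi(0)=0$, $d\phi(0)=0$, and $|\nabla^k\phi|\lesssim d^{k/2}$ on the ball of radius $\kappa d^{-1/2}$. This is a normal gauge in the rescaled coordinate $\zeta=\sqrt d\,z$.
    \item Let $P$ be the polynomial of degree $\ell$ with $j_x^\ell(Pe)=J$, and set $\sigma_0:=\chi(\sqrt d\,|z|/\kappa)\,P\,e$ for a fixed cutoff $\chi$.
    \item Then $\bar\partial\sigma_0$ is supported in the annulus $\kappa/2\leq\sqrt d|z|\leq\kappa$, vanishes near $x$, and satisfies the weighted integrability condition \eqref{Equation: 7}. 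In rescaled variables the coefficients of $P$ are controlled by $|J|$ times powers of $\sqrt d$. In that annulus $w_x^{-\ell-1}\lesssim d^{\ell+1}$ and the exponential weight is $O(1)$, so the weighted norm of $\bar\partial\sigma_0$ is bounded by $C\,d^{\ell+1}|J|^2$ times a power of $d$ that I will track crudely.
\end{itemize}

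\textbf{Step 3: correction and pointwise bounds.} Apply Lemma~\ref{Lemma: del bar solution with weighted norm} with $\alpha=\bar\partial\sigma_0$ to get $\xi$ with $j_x^\ell(\xi)=0$ and the weighted estimate carrying the factor $e^{\delta A_d\dist^2}$. Set $\sigma_d:=\sigma_0-\xi\in H^0(L)$, so that $j^\ell_x(\sigma_d)=J$.

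The pointwise estimate \eqref{Equation: Gaussian peak estimate} comes from local elliptic estimates for holomorphic sections on balls of radius $d^{-1/2}$ around each $y$. These are uniform by \eqref{Equation: natural scale curvature bounds} and give
\begin{align*}
    |\nabla^j\sigma(y)|\lesssim d^{\frac{1+j}{2}}\|\sigma\|_{L^2(B_{d^{-1/2}}(y))}.
\end{align*}
On such a ball the weight $e^{\delta A_d\dist^2(x,\cdot)}$ varies by a bounded factor, because $A_d\leq d$. So the weighted $L^2$ bound turns into Gaussian decay $e^{-A_d\dist^2/K}$, with polynomial prefactors absorbed into $d^{(\ell+1+j)/2}$ and into $K$, after shrinking the Gaussian constant.

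\textbf{Main obstacle.} I expect Step~1 to be the main difficulty: extracting a spectral gap of exact size $d^{1/(m+1)}$ from merely averaged positivity at scale $d^{-1/(2(m+1))}$. A close second is keeping the bookkeeping of powers of $d$ consistent between the cutoff scale $d^{-1/2}$, the singular weight $w_x^{-\ell-1}$, and the Gaussian scale $A_d^{-1/2}$.
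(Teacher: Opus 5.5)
Your route is the paper's: a covering/Poincaré argument at scale $d^{-\frac{1}{2(m+1)}}$ plus Bochner--Kodaira for the spectral gap, a cut-off local polynomial section at scale $d^{-\frac12}$, correction by the weighted lemma, and natural-scale Cauchy estimates. The gap is in Step~3. There you claim that polynomial prefactors can be absorbed into $d^{\frac{\ell+1+j}{2}}$ and into $K$. They cannot. $K$ is independent of $d$, and for $\dist_{g_0}(x,y)\lesssim A_d^{-\frac12}$ the Gaussian factor is of order one, so shrinking its constant absorbs nothing. The power $d^{\frac{\ell+1+j}{2}}$ therefore has to come out exactly.

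With the $K/A_d$ form of the lemma's estimate, the count (squared size of $\bar\partial\sigma_0$, singular weight, area of the annulus) is
\[
\int_\Sigma|\xi|_h^2w_x^{-\ell-1}e^{\delta A_d\dist_{g_0}^2(x,\cdot)}\,d\vol_{g_0}\lesssim A_d^{-1}\cdot d\cdot d^{\ell+1}\cdot d^{-1}|J|^2=d^{\ell+1-\frac{1}{m+1}}|J|^2 .
\]
The estimate $|\nabla^j\sigma_d(y)|\lesssim d^{\frac{1+j}{2}}\|\sigma_d\|_{L^2(B_{d^{-1/2}}(y))}$ then only gives $d^{\frac{\ell+1+j}{2}}\,d^{\frac{m}{2(m+1)}}|J|$ times the Gaussian. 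This overshoots the claimed bound by a positive power of $d$ whenever $m\geq1$.

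The missing ingredient is the local curvature gain. Use the first, stronger inequality of the lemma, with the factor $(A_d+b_d)^{-1}$. You observed that $\rho\geq c/2$ on $B_{\kappa d^{-1/2}}(x)$ but never used it. It gives $b_d\gtrsim c\,d$ on $\operatorname{spt}\bar\partial\sigma_0$, hence $(A_d+b_d)^{-1}\lesssim d^{-1}$ there. The count becomes
\[
d^{-1}\cdot d\cdot d^{\ell+1}\cdot d^{-1}=d^{\ell}.
\]
Using $w_x\leq C$ and $\int|\sigma_0|^2e^{\delta A_d\dist_{g_0}^2(x,\cdot)}\lesssim d^{-1}|J|^2$, you get
\[
\int_\Sigma|\sigma_d|^2e^{\delta A_d\dist_{g_0}^2(x,\cdot)}\lesssim d^{\ell}|J|^2,
\]
and the Cauchy estimate gives exactly $d^{\frac{1+j}{2}}d^{\frac{\ell}{2}}=d^{\frac{\ell+1+j}{2}}$. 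This is precisely why the statement assumes $\rho(x)\geq c$ and $\|d\rho\|_{L^\infty}\leq\sqrt d/c$.

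For this count you also need the Taylor coefficients of $P$ in $\zeta=\sqrt d\,z$ to be bounded by $C|J|$, rather than by $|J|$ times powers of $\sqrt d$. This holds because they come from $(d^{-a/2}J_a)_{a\le\ell}$ via a uniformly bounded triangular change.

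A minor point: with $\Lambda_{\omega_0}\omega_0=1$, the curvature hypothesis gives only $\Lambda_{\omega_0}(iF_h)\geq2\pi d\rho$. So take $b_d=2\pi d\rho$, not $4\pi d\rho$; this is harmless for the coercivity argument.
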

\begin{proof}
    We divide the proof into three steps.

    \medskip
    \noindent
    \textbf{Step 1: a polynomial $L^2$-spectral gap for
    $\bar\partial\bar\partial^*$}.
    We claim that there exist constants $\tilde C_1>0$ and $d_1\in\n$
    such that, for every $d\ge d_1$,
    \begin{align}\label{Equation: polynomial spectral gap for delbar}
        \|\bar\partial^*\beta\|_{L^2(\Sigma)}^2
        \ge
        \frac{d^{\frac{1}{m+1}}}{\tilde C_1}
        \|\beta\|_{L^2(\Sigma)}^2
        \qquad
        \forall\,\beta\in\Omega^{0,1}(L).
    \end{align}
    Set
    \begin{align*}
        r_d:=r\,d^{-\frac{1}{2(m+1)}}.
    \end{align*}
    For $d$ sufficiently large, let
    \begin{align*}
        \Sigma=\bigcup_{i=1}^{N_d}B_i,
        \qquad
        B_i:=B_{r_d}(p_i),
    \end{align*}
    be a finite cover whose multiplicity is bounded by a constant
    $N_0\in\n$ depending only on the geometry of $(\Sigma,g_0)$.
    For every $i=1,\ldots,N_d$, define
    \begin{align*}
        E_i:=
        \left\{
        y\in B_i:
        \rho(y)\ge\frac{c}{2}r_d^{2m}
        \right\}.
    \end{align*}
    By \eqref{Equation: assumption on rho} and the bound
    $0\le\rho\le c^{-1}$, we have
    \begin{align*}
        c\vol_{g_0}(B_i)\le\int_{B_i}\rho\,d\vol_{g_0}\le\frac{1}{c}\vol_{g_0}(E_i)+\frac{c}{2}r_d^{2m}\vol_{g_0}(B_i).
    \end{align*}
    Since $r_d\le1$, it follows that
    \begin{align}\label{Equation: positive energy content in every ball of right scale}
        \vol_{g_0}(E_i)
        \ge
        \frac{c^2}{2}\vol_{g_0}(B_i).
    \end{align}
    Fix $\beta\in\Omega^{0,1}(L)$ and set
    \begin{align*}
        u:=|\beta|_{g_0,h},
        \quad
        (u)_{B_i}:=\fint_{B_i}u\,d\vol_{g_0}.
    \end{align*}
    By the Poincar\'e inequality and the Kato inequality,
    \begin{align}\label{Equation: Poincare}
        \int_{B_i}|u-(u)_{B_i}|^2\,d\vol_{g_0}
        \le
        C_1r_d^2
        \int_{B_i}|du|_{g_0}^2\,d\vol_{g_0}
        \le
        C_1r_d^2
        \int_{B_i}|\nabla\beta|_{g_0,h}^2\,d\vol_{g_0},
    \end{align}
    where $C_1>0$ depends only on the geometry of
    $(\Sigma,g_0)$.
    Using \eqref{Equation: positive energy content in every ball of right scale}, we obtain
    \begin{align*}
        |(u)_{B_i}|^2\vol_{g_0}(B_i)
        \le
        \frac{2}{c^2}
        |(u)_{B_i}|^2\vol_{g_0}(E_i)
        \
        \le
        \frac{4}{c^2}
        \left(
        \int_{E_i}u^2\,d\vol_{g_0}
        +
        \int_{E_i}|u-(u)_{B_i}|^2\,d\vol_{g_0}
        \right).
    \end{align*}
    Since
    \begin{align*}
        \rho\ge\frac{c}{2}r_d^{2m}
        \qquad\mbox{ on }E_i,
    \end{align*}
    we have
    \begin{align*}
        \int_{E_i}u^2\,d\vol_{g_0}
        \le
        \frac{2}{cr_d^{2m}}
        \int_{B_i}\rho u^2\,d\vol_{g_0}.
    \end{align*}
    It follows from the last two estimates and
   \eqref{Equation: Poincare} that
    \begin{align}\label{Equation: estimate of average of beta}
        |(u)_{B_i}|^2\vol_{g_0}(B_i)
        \le
        \frac{8}{c^3r_d^{2m}}
        \int_{B_i}\rho u^2\,d\vol_{g_0}
        +
        \frac{4C_1r_d^2}{c^2}
        \int_{B_i}|\nabla\beta|_{g_0,h}^2\,d\vol_{g_0}.
    \end{align}
    Moreover,
    \begin{align*}
        \int_{B_i}u^2\,d\vol_{g_0}
        =
        \int_{B_i}|u-(u)_{B_i}|^2\,d\vol_{g_0}
        +
        |(u)_{B_i}|^2\vol_{g_0}(B_i).
    \end{align*}
    Combining this identity with \eqref{Equation: Poincare} and
    \eqref{Equation: estimate of average of beta}, we find
    \begin{align*}
        r_d^{-2}
        \int_{B_i}|\beta|_{g_0,h}^2\,d\vol_{g_0}
        \le
        C_2
        \left(
        \int_{B_i}|\nabla\beta|_{g_0,h}^2\,d\vol_{g_0}
        +
        r_d^{-2(m+1)}
        \int_{B_i}\rho|\beta|_{g_0,h}^2\,d\vol_{g_0}
        \right),
    \end{align*}
    where $C_2>0$ depends only on $c$, $r$, and the geometry of $(\Sigma,g_0)$. Since
    \begin{align*}
        r_d^{-2}=r^{-2}d^{\frac{1}{m+1}} \quad\Leftrightarrow\quad r_d^{-2(m+1)}=r^{-2(m+1)}d,
    \end{align*}
    and $r>0$ is fixed, all powers of $r$ may be absorbed into the
    constants and we conclude that
    \begin{align}\label{Equation: local full gradient spectral estimate}
        d^{\frac{1}{m+1}}
        \int_{B_i}|\beta|_{g_0,h}^2\,d\vol_{g_0}
        \le
        C_2
        \left(
        \int_{B_i}|\nabla\beta|_{g_0,h}^2\,d\vol_{g_0}
        +
        d\int_{B_i}\rho|\beta|_{g_0,h}^2\,d\vol_{g_0}
        \right).
    \end{align}
    Summing \eqref{Equation: local full gradient spectral estimate} over
    $i=1,\ldots,N_d$ and using the bounded multiplicity of the cover, we obtain
    \begin{align}\label{Equation: global full gradient spectral estimate}
        d^{\frac{1}{m+1}}
        \|\beta\|_{L^2(\Sigma)}^2
        \le
        C_3
        \left(
        \|\nabla\beta\|_{L^2(\Sigma)}^2
        +
        d\int_\Sigma
        \rho|\beta|_{g_0,h}^2\,d\vol_{g_0}
        \right),
    \end{align}
    for some $C_3>0$ independent of $d$.
    Since $\Sigma$ is a complex curve, we have
    \begin{align*}
        \bar\partial\beta\in\Omega^{0,2}(L)=\{0\}.
    \end{align*}
    Thus, by the Bochner--Kodaira--Weitzenb\"ock formula, we get
    \begin{align}\label{Equation: real Weitzenbock beta}
        2\|\bar\partial^*\beta\|_{L^2(\Sigma)}^2
        &=2\big(\|\bar\partial\beta\|_{L^2(\Sigma)}^2+\|\bar\partial^*\beta\|_{L^2(\Sigma)}^2\big)\\
        \nonumber
        &=
        \|\nabla\beta\|_{L^2(\Sigma)}^2
        +
        \int_\Sigma
        \left(
        \Lambda_{\omega_0}(iF_\nabla)
        +
        K_{g_0}
        \right)
        |\beta|_{g_0,h}^2\,d\vol_{g_0}.
    \end{align}
    Since $K_{g_0}\ge-1$ and $\Lambda_{\omega_0}(iF_\nabla)\ge 2\pi d\rho$, it follows that
    \begin{align*}
        2\|\bar\partial^*\beta\|_{L^2(\Sigma)}^2
        \ge
        \|\nabla\beta\|_{L^2(\Sigma)}^2
        +
        2\pi d
        \int_\Sigma\rho|\beta|_{g_0,h}^2\,d\vol_{g_0}
        -
        \|\beta\|_{L^2(\Sigma)}^2.
    \end{align*}
    In particular,
    \begin{align}\label{Equation: full gradient controlled by delbar adjoint}
        \|\nabla\beta\|_{L^2(\Sigma)}^2
        +
        d\int_\Sigma\rho|\beta|_{g_0,h}^2\,d\vol_{g_0}
        \le
        2\|\bar\partial^*\beta\|_{L^2(\Sigma)}^2
        +
        \|\beta\|_{L^2(\Sigma)}^2.
    \end{align}
    Combining \eqref{Equation: global full gradient spectral estimate} and
    \eqref{Equation: full gradient controlled by delbar adjoint}, we obtain
    \begin{align*}
        d^{\frac{1}{m+1}}
        \|\beta\|_{L^2(\Sigma)}^2
        \le
        C_3
        \left(
        2\|\bar\partial^*\beta\|_{L^2(\Sigma)}^2
        +
        \|\beta\|_{L^2(\Sigma)}^2
        \right).
    \end{align*}
    Choose $d_1\in\n$ sufficiently large that
    \begin{align*}
        d^{\frac{1}{m+1}}\ge2C_3
        \qquad
        \forall\,d\ge d_1.
    \end{align*}
    The last term on the right-hand side can then be absorbed into the
    left-hand side, giving
    \begin{align*}
        \|\bar\partial^*\beta\|_{L^2(\Sigma)}^2
        \ge
        \frac{1}{4C_3}
        d^{\frac{1}{m+1}}
        \|\beta\|_{L^2(\Sigma)}^2.
    \end{align*}
    Thus \eqref{Equation: polynomial spectral gap for delbar} holds with
    $\tilde C_1:=4C_3$.
    
    \medskip
    \noindent
    \textbf{Step 2: approximate peak section with prescribed jet}. Choose holomorphic coordinates on a fixed neighbourhood of $x$, uniformly
    controlled with respect to $g_0$, with $z(x)=0$. A disk of radius
    $r_d=c_0d^{-\frac12}$ is contained in this neighbourhood for large $d$.
    Choose $c_0>0$ small enough that $\rho\geq c/2$ on a slightly larger
    disk; this is possible because $\rho(x)\geq c$ and
    $\|d\rho\|_{L^\infty}\leq c^{-1}\sqrt d$.
    The constants multiplying these radii are fixed independently of $d$.
    
    We use the following consequence of
    \eqref{Equation: natural scale curvature bounds}.
    In the coordinate $u=\sqrt d\,z$, local holomorphic frames may be chosen
    so that the frame metric, its inverse, and their derivatives of each
    fixed order are uniformly bounded on a fixed smaller disk.
    To see this, write the curvature equation in the rescaled coordinate,
    solve the scalar Poisson equation for a local metric potential on a larger
    fixed disk with zero boundary values, and use interior elliptic estimates.
    The difference from the original local potential is harmonic and can be
    removed by a holomorphic change of frame. A constant change normalizes
    the frame norm at the centre. The rescaled connection coefficients and
    the fixed-order changes between ordinary and covariant jets are therefore
    uniformly bounded. The corresponding coordinate estimates are uniform
    also for the source metric.
    
    It follows that the prescribed jet $J$ is realized by a polynomial
    section $P_Je$ on this disk whose rescaled derivatives of every fixed
    order are bounded by $C_{j,\ell}|J|_{g_0,h}$. More explicitly, the ordinary
    Taylor coefficients in $u$ are obtained from
    $(d^{-a/2}J_a)_{a=0}^{\ell}$ by a uniformly bounded triangular linear
    change of variables. Since $d\geq1$, their norms are bounded by
    $C_\ell|J|_{g_0,h}$.
    Choose a cutoff $\chi$ equal to one near $x$, supported in the chosen
    disk, with $|\nabla^j\chi|\leq C_jd^{\frac{j}{2}}$, and define
    \begin{align*}
        \sigma_{\mathrm{app}}:=\chi P_Je,\qquad
        \alpha:=\bar\partial\sigma_{\mathrm{app}}.
    \end{align*}
    The cutoff is chosen with transition annulus at distance comparable to
    $d^{-\frac12}$ from $x$. Thus $\alpha$ vanishes near $x$, its support has
    area at most $C/d$, and
    \begin{align}\label{Equation: approximate peak natural scale estimates}
        j_x^\ell(\sigma_{\mathrm{app}})&=J,\\
        |\nabla^j\sigma_{\mathrm{app}}|_{g_0,h}
            &\leq C_jd^{\frac{j}{2}}|J|_{g_0,h},\qquad
        |\alpha|_{g_0,h}\leq C\sqrt d\,|J|_{g_0,h}.
        \nonumber
    \end{align}
    %
    
    \medskip
    \noindent
    \textbf{Step 3: weighted correction and pointwise estimates}.
    Choose $w=w_x$ equal to $|z|^2$ on a fixed coordinate neighbourhood
    of $x$, uniformly bounded above on $\Sigma$ and uniformly bounded below
    away from that fixed neighbourhood. In particular, the shrinking disks
    used in Step 2 are contained in the coordinate neighbourhood; the
    positive lower bound away from it is not asserted outside a shrinking
    disk. Such $w_x$ and the divisor metrics may be chosen with constants
    uniform in $x$.
    Put $q:=\ell+1$, and set
    \begin{align*}
        A_d:=d^{\frac{1}{m+1}},
        \qquad
        b_d:=2\pi d\rho.
    \end{align*}
    We verify the hypotheses of
    Lemma~\ref{Lemma: del bar solution with weighted norm}.
    By \eqref{Equation: polynomial spectral gap for delbar},
    \begin{align*}
        \|\beta\|_{L^2(h)}
        \leq\frac{\sqrt{\tilde C_1}}{\sqrt{A_d}}
        \|\bar\partial_h^*\beta\|_{L^2(h)}
        \qquad\forall\,\beta\in\Omega^{0,1}(L).
    \end{align*}
    Moreover, the curvature hypothesis gives
    \begin{align*}
        \Lambda_{\omega_0}(iF_h)\geq2\pi d\rho=b_d,
    \end{align*}
    so the curvature condition in the lemma holds with $C_0=0$.
    To verify the scalar coercivity condition, apply the covering and
    Poincar\'e argument of Step 1 directly to an arbitrary
    $u\in C^\infty(\Sigma,\r)$, without the use of Kato's inequality.
    The same argument leading to
    \eqref{Equation: global full gradient spectral estimate} gives
    \begin{align*}
        A_d\int_\Sigma|u|^2\,d\vol_{g_0}
        &\leq C_3\left(
            \int_\Sigma|du|_{g_0}^2\,d\vol_{g_0}
            +d\int_\Sigma\rho|u|^2\,d\vol_{g_0}
        \right)\\
        &\leq C_3\int_\Sigma
        \bigl(|du|_{g_0}^2+b_d|u|^2\bigr)\,d\vol_{g_0}.
    \end{align*}
    Thus the scalar condition holds with $\lambda:=C_3^{-1}$.
    Finally, $\alpha$ is smooth and vanishes in a neighbourhood of $x$,
    so it satisfies the integrability condition
    \eqref{Equation: 7}. By the choice of the cutoff support in Step 2,
    we also have
    \begin{align*}
        b_d\geq\pi c d
        \qquad\mbox{on }\,\operatorname{spt}\alpha.
    \end{align*}
    After increasing $d_0$ if necessary, we may therefore apply
    Lemma~\ref{Lemma: del bar solution with weighted norm}.
    Fix $\delta:=\delta_0/2$, where $\delta_0>0$ is supplied by the lemma,
    and use its strengthened estimate
    \eqref{Equation: weighted correction local curvature gain}.
    Since $A_d\leq d$, on its support we have
    \begin{align*}
        (A_d+b_d)^{-1}\leq C d^{-1},\qquad
        w^{-q}\leq C d^q,\qquad
        e^{\delta A_d\dist_{g_0}^2(x,\,\cdot\,)}\leq C.
    \end{align*}
    Together with \eqref{Equation: approximate peak natural scale estimates}
    and the area bound for the support, this gives a solution $\xi$ with
    $\bar\partial\xi=\alpha$, $j_x^\ell(\xi)=0$, and
    \begin{align}\label{Equation: corrected peak weighted norm}
        \int_\Sigma |\xi|_h^2w^{-q}
           e^{\delta A_d\dist_{g_0}^2(x,\,\cdot\,)}\,d\vol_{g_0}
        \leq C d^{q-1}|J|_{g_0,h}^2.
    \end{align}
    Here the power is $d^{-1}\cdot d^q\cdot d\cdot d^{-1}=d^{q-1}$:
    respectively the local curvature gain, the singular weight, the squared
    size of $\alpha$, and the area of its support.
    The section $\sigma_d:=\sigma_{\mathrm{app}}-\xi$ is holomorphic and
    has the prescribed jet. Since $w$ is uniformly bounded above and
    $\int_\Sigma|\sigma_{\mathrm{app}}|_h^2
     e^{\delta A_d\dist_{g_0}^2(x,\,\cdot\,)}\,d\vol_{g_0}
     \leq C d^{-1}|J|_{g_0,h}^2$, we conclude that
    \begin{align}\label{Equation: holomorphic peak global weighted norm}
        \int_\Sigma|\sigma_d|_h^2
           e^{\delta A_d\dist_{g_0}^2(x,\,\cdot\,)}\,d\vol_{g_0}
        \leq C d^{q-1}|J|_{g_0,h}^2.
    \end{align}
    At any $y\in\Sigma$, the uniformly controlled holomorphic frames above
    and the ordinary Cauchy estimates give
    \begin{align*}
        |\nabla^j\sigma_d(y)|_{g_0,h}
        \leq C_jd^{(j+1)/2}
           \|\sigma_d\|_{L^2(B_{c_1d^{-\frac12}}(y))}.
    \end{align*}
    For $z\in B_{c_1d^{-\frac12}}(y)$, the triangle inequality implies
    \begin{align*}
        \dist_{g_0}^2(x,z)
        \geq\tfrac12\dist_{g_0}^2(x,y)-c_1^2d^{-1}.
    \end{align*}
    Since $A_d/d\leq1$, \eqref{Equation: holomorphic peak global weighted norm}
    therefore yields
    \begin{align*}
        |\nabla^j\sigma_d(y)|_{g_0,h}
        &\leq C_jd^{(j+1)/2}d^{(q-1)/2}|J|_{g_0,h}
              e^{-\delta A_d\dist_{g_0}^2(x,y)/4}\\
        &\leq Kd^{(\ell+1+j)/2}|J|_{g_0,h}
              e^{-d^{1/(m+1)}\dist_{g_0}^2(x,y)/K}.
    \end{align*}
    This proves \eqref{Equation: Gaussian peak estimate}.
\end{proof}
%
%
\section{Balanced holomorphic covers of the sphere by closed Riemann surfaces}
The goal of this section is to construct holomorphic branched covers of $\cp^1$ whose geometry remains quantitatively controlled as their degree tends to infinity. The construction proceeds in three stages. We first use a sharp constant-curvature version of the peak-section construction of Section~2 to obtain a uniformly base-point-free pencil of high degree. We then perturb this pencil so that its Wronskian is quantitatively non-degenerate, first on a definite region at the natural scale $d^{-\frac12}$ and then globally up to a polynomial loss. Finally, taking the ratio of the two sections produces the balanced holomorphic covers required later in the construction of balanced Bor{\accent23 u}vka covers.

\medskip
We equip $\cp^1$ with its Fubini--Study metric. Unless otherwise specified, the norms of $df$ and $\nabla^2f$ are computed using $g_0$ on $\Sigma$ and the Fubini--Study metric on $\cp^1$. 
\begin{definition} \label{Definition: m-balanced cover}
	Let $f:\Sigma\to\cp^1$ be a branched holomorphic cover of degree $d$. We say that $f$ is \textit{balanced} if there exist a universal non-negative integer $n_0\in\n$ and constants $c,C>0$ depending only on the geometry of $(\Sigma,g_0)$ such that the following facts hold.
	\begin{enumerate}[(i)]
		\item $\lvert df\rvert_{g_0}^2\le Cd$ and $\lvert \nabla^2f\rvert_{g_0}\le Cd$ on $\Sigma$. 
		\item For every $x\in\Sigma$,
		\begin{align*}
			\fint_{B_{d^{-\frac{1}{2}}}(x)}\lvert df\rvert_{g_0}^2\, d\vol_{g_0}\ge cd.
		\end{align*}
        \item We have 
            \begin{align*}
                |df|_{g_0}^2+d^{-1}|\nabla^2f|_{g_0}^2\ge c d^{-n_0} \qquad\mbox{ on }\,\Sigma.
            \end{align*}
	\end{enumerate}
\end{definition}
The three conditions above describe complementary aspects of the geometry of $f$ at the natural scale $d^{-\frac12}$. Condition~(i) prevents the concentration of its first and second derivatives, while condition~(ii) ensures that a definite amount of energy is present in every ball of that scale. Condition~(iii) rules out excessive degeneracy: even near a branch point, the first and second derivatives cannot vanish simultaneously faster than a fixed polynomial rate in $d$. The precise value of the exponent $n_0$ will play no role; what is important is that the lower bound is polynomial rather than exponentially small.
\begin{remark}\label{Remark: balanced pullback bounded geometry}
    Throughout the uniform estimates below, the constants $c,C,n_0$ in the definition of balancedness are fixed. No extra higher-derivative hypothesis is required in the definition of a balanced cover. Holomorphicity and condition~(i), applied in source disks of radius $c d^{-\frac12}$ whose images remain in fixed target coordinate disks, give the Cauchy estimates
    \begin{align*}
        \|\nabla^k f\|_{L^\infty}\leq C_kd^{k/2} \qquad\forall\,k\geq1.
    \end{align*}
    Consequently, for every fixed positive integer $a$, the pullback metric on $f^*\mathcal O_{\cp^1}(a)$ satisfies
    \begin{align*}
        \|\nabla^kF_{f^*h_a}\|_{L^\infty}\leq C_{k,a}d^{1+k/2}\leq B_{k,a}(ad)^{1+k/2}.
    \end{align*}
    These are exactly the natural-scale curvature bounds for a bundle of degree $ad$. Here $h_a$ is the fixed Fubini--Study metric on $\mathcal O_{\cp^1}(a)$. 
    %
    %
    Moreover, setting
    \begin{align*}
        b_d:=\Lambda_{\omega_0}(iF_{f^*h_a})=\kappa_a|df|_{g_0}^2
    \end{align*}
    for a fixed constant $\kappa_a>0$, conditions~(i) and~(ii) give
    \begin{align*}
        0\leq b_d\leq C_a d, \qquad \fint_{B_{d^{-\frac12}}(x)}b_d\,d\vol_{g_0}\geq c_a d \qquad\forall\,x\in\Sigma.
    \end{align*}
    Applying the covering and Poincar\'e argument in Step~1 of Proposition~\ref{Proposition: holomorphic peak sections} to real-valued functions on balls of radius $d^{-\frac12}$, we obtain
    \begin{align*}
        \int_\Sigma\bigl(|du|_{g_0}^2+b_d|u|^2\bigr)\,d\vol_{g_0}\geq\lambda_a d\int_\Sigma|u|^2\,d\vol_{g_0} \qquad\forall\,u\in C^\infty(\Sigma,\r),
    \end{align*}
    where $\lambda_a>0$ is independent of $d$ and of the balanced cover. This verifies the scalar coercivity hypothesis of Lemma~\ref{Lemma: del bar solution with weighted norm} for these pullback metrics.
\end{remark}
\subsection{Quantitative holomorphic pencils}
Recall that a subset $\Lambda\subset\Sigma$ is said to be  \textit{$r$-separated} if
\begin{align*}
    \dist_{g_0}(p,q)\geq r \qquad\forall\,p,q\in\Lambda,\quad p\neq q.
\end{align*}
The following elementary estimate controls the superposition of Gaussian peaks centred at a separated collection of points. Its first conclusion gives a uniform bound for the total contribution of all peaks, while the second shows that interactions between distinct peaks
in the same well-separated family are exponentially small.
\begin{lemma}\label{Lemma: bound on sum of exponentials}
    Fix $\eta\in(0,+\infty)$ and $\kappa>0$. For $d$ sufficiently large, choose an $\eta d^{-\frac{1}{2}}$-separated set $\Lambda_d\subset\Sigma$.
    Then, there exist constants $c,C>0$ depending only on the geometry of $(\Sigma,g_0)$ and $\kappa$ such that
    \begin{align}\label{Equation: first bound lemma}
        \sum_{p\in\Lambda_d}e^{-\kappa d\dist_{g_0}^2(p,\,\cdot\,)}\le C(1+\eta^{-2})\qquad\mbox{ on }\,\Sigma.
    \end{align}
    and
    \begin{align}\label{Equation: second bound lemma}
        \sum_{\substack{p\in\Lambda_d\\p\neq q}}e^{-\kappa d\dist_{g_0}^2(p,q)}\le C(1+\eta^{-2})e^{-c\eta^2}\qquad\forall\,q\in\Lambda_d.
    \end{align}
\end{lemma}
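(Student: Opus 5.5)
The plan is a packing argument: partition $\Lambda_d$ into dyadic shells around the evaluation point and bound the cardinality of each shell by comparing volumes of disjoint small balls. Fix $y\in\Sigma$ and set $s:=\eta d^{-\frac12}/2$. By separation, the balls $\{B_s(p)\}_{p\in\Lambda_d}$ are pairwise disjoint. Since $(\Sigma,g_0)$ is compact with constant curvature, there are constants $0<a\le A$, depending only on the geometry, such that
\begin{align*}
    a\min\{t^2,1\}\le\vol_{g_0}(B_t(z))\le A\min\{t^2,1\}
\end{align*}
for all $z\in\Sigma$ and $t>0$.

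Fix a unit $\tau:=d^{-\frac12}$ and define the shells
\begin{align*}
    S_0:=\{p\in\Lambda_d:\dist_{g_0}(p,y)<\tau\},\qquad
    S_k:=\{p\in\Lambda_d:k\tau\le\dist_{g_0}(p,y)<(k+1)\tau\}\quad(k\ge1).
\end{align*}
Every ball $B_s(p)$ with $p\in S_0\cup\dots\cup S_k$ lies in $B_{(k+1)\tau+s}(y)$. Comparing volumes, as long as $s$ stays below the injectivity scale (true for $d$ large if $\eta$ is bounded; otherwise the count is trivially $O(1)$ since the number of points is at most $C/\min\{s^2,1\}$), we get
\begin{align*}
    \#(S_0\cup\dots\cup S_k)\le\frac{A\,((k+1)\tau+s)^2}{a\,s^2}\le C\bigl(1+\eta^{-2}\bigr)(k+1)^2\max\{1,\eta^2\}/\max\{1,\eta^2\}.
\end{align*}
More cleanly, I will use the bound $\#S_k\le C(1+\eta^{-2})(k+1)$ when $\eta\le1$ via an annulus area bound, and $\#S_k\le C(k+1)^2$ in general. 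On $S_k$ one has $e^{-\kappa d\dist^2}\le e^{-\kappa k^2}$. Summing over $k$,
\begin{align*}
    \sum_{p\in\Lambda_d}e^{-\kappa d\dist_{g_0}^2(p,y)}\le C(1+\eta^{-2})\sum_{k\ge0}(k+1)^2e^{-\kappa k^2}\le C'(1+\eta^{-2}),
\end{align*}
which is \eqref{Equation: first bound lemma}. The only point requiring care is the uniformity of the volume comparison when radii exceed the injectivity radius. I will handle this by capping the area at $\vol_{g_0}(\Sigma)$: for shells beyond a fixed radius, $\dist\ge r_{\mathrm{inj}}$ gives a factor $e^{-\kappa d r_{\mathrm{inj}}^2}$, which beats the total cardinality $\le Cd/\eta^2$.

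For \eqref{Equation: second bound lemma}, take $y=q\in\Lambda_d$ and restrict to $p\ne q$, so $\dist(p,q)\ge\eta\tau$. I split the Gaussian as
\begin{align*}
    e^{-\kappa d\dist^2}=e^{-\frac{\kappa}{2}d\dist^2}\,e^{-\frac{\kappa}{2}d\dist^2}\le e^{-\frac{\kappa}{2}\eta^2}\,e^{-\frac{\kappa}{2}d\dist^2}.
\end{align*}
Applying the first bound with $\kappa/2$ in place of $\kappa$ to the remaining factor gives $C(1+\eta^{-2})e^{-c\eta^2}$ with $c=\kappa/2$. I expect no real obstacle here; the main bookkeeping is making the packing constant uniform in $\eta$ across both the small-$\eta$ regime (many points per unit ball, giving the $\eta^{-2}$ factor) and the large-$\eta$ regime (at most $O(1)$ points per natural-scale ball).
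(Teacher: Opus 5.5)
Your proposal is correct and follows essentially the paper's argument: a volume-packing count of the separated points in shells around the evaluation point, followed by the same splitting $e^{-\kappa d\dist_{g_0}^2}\le e^{-\frac{\kappa}{2}\eta^2}e^{-\frac{\kappa}{2}d\dist_{g_0}^2}$ for the second bound. The paper takes shells in $d\,\dist_{g_0}^2$, which gives $\#A_j\le C(1+(j+1)\eta^{-2})$ summed against $e^{-\kappa j}$. Your shells in $\sqrt d\,\dist_{g_0}$ give the cumulative bound $C(1+\eta^{-2})(k+1)^2$ summed against $e^{-\kappa k^2}$, which is an equivalent bookkeeping.

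There is one slip. Your aside that $\#S_k\le C(k+1)^2$ ``in general'' is not uniform as $\eta\to0$ and should be dropped. Your final sum only uses the bound $C(1+\eta^{-2})(k+1)^2$ from your volume comparison, so nothing is lost. The injectivity-radius discussion is also unnecessary, since $\vol_{g_0}(B_t(y))\le At^2$ holds for every $t>0$ on a compact surface.
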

\begin{proof}
    Fix any $x\in\Sigma$ and define
    \begin{align*}
        A_0&:=\big\{p\in\Lambda_d \mbox{ : } \dist_{g_0}^2(p,x)< d^{-1}\big\}\\
        A_j&:=\big\{p\in\Lambda_d \mbox{ : } jd^{-1}\le\dist_{g_0}^2(p,x)< (j+1)d^{-1}\big\} \qquad\forall\,j\ge 1.
    \end{align*}
    Note that, as $\Lambda_d$ is $\eta d^{-\frac{1}{2}}$-separated, the balls $\Big\{B_{\frac{\eta}{4}d^{-\frac{1}{2}}}(p)\Big\}_{p\in\Lambda_d}$ are disjoint. Moreover, for $d$ sufficiently large, there are constants $c_1,C_1>0$ depending only on the geometry of $(\Sigma,g_0)$ such that
    \begin{align*}
        \vol_{g_0}\Big(B_{\frac{\eta}{4}d^{-\frac{1}{2}}}(y)\Big)\ge c_1\eta^2d^{-1} \quad\mbox{ and }\quad \vol_{g_0}(B_r(y))\le C_1r^2,
    \end{align*}
    for every $y\in\Sigma$ and $r\in(0,+\infty)$. Then, letting $C_2:=\frac{2C_1}{c_1}>0$, for every $j\ge 0$ we get
    \begin{align*}
        \#A_j\cdot c_1\eta^2d^{-1}&\le\sum_{p\in A_j}\vol_{g_0}\Big(B_{\frac{\eta}{4}d^{-\frac{1}{2}}}(p)\Big)\\
        &\le\vol_{g_0}\left(B_{(\sqrt{j+1}+\frac{\eta}{4})d^{-\frac{1}{2}}}(x)\right)\\
        &\le 2C_1(j+1+\eta^2)d^{-1},
    \end{align*}
    which implies that 
    \begin{align*}
        \#A_j\le C_2\bigg(1+\frac{j+1}{\eta^2}\bigg).
    \end{align*}
    Hence,
    \begin{align*}
        \sum_{p\in\Lambda_d}e^{-\kappa d\dist_{g_0}^2(p,x)}&=\sum_{j=0}^{+\infty}\sum_{p\in A_j}e^{-\kappa d\dist_{g_0}^2(p,x)}\le\sum_{j=0}^{+\infty}\#A_je^{-\kappa j}\\
        &\le C_2\sum_{j=0}^{+\infty}\bigg(1+\frac{j+1}{\eta^2}\bigg)e^{-\kappa j}\le C_3(1+\eta^{-2}),
    \end{align*}
    where
    \begin{align*}
        C_3:=C_2\sum_{j=0}^{+\infty}(j+1)e^{-\kappa j}<+\infty
    \end{align*}
    depends on the geometry of $(\Sigma,g_0)$ and $\kappa$. Thus, \eqref{Equation: first bound lemma} follows by arbitrariness of $x\in\Sigma$. To prove \eqref{Equation: second bound lemma}, fix $q\in\Lambda_d$. Since $\Lambda_d$ is $\eta d^{-\frac{1}{2}}$-separated, we have
    \begin{align*}
        d\dist_{g_0}^2(p,q)\ge\eta^2 \qquad\forall\,p\in\Lambda_d\smallsetminus\{q\}.
    \end{align*}
    Consequently,
    \begin{align*}
        e^{-\kappa d\dist_{g_0}^2(p,q)}&=e^{-\frac{\kappa}{2}d\dist_{g_0}^2(p,q)}e^{-\frac{\kappa}{2}d\dist_{g_0}^2(p,q)}\\
        &\le e^{-\frac{\kappa}{2}\eta^2}e^{-\frac{\kappa}{2}d\dist_{g_0}^2(p,q)}
    \end{align*}
    for every $p\in\Lambda_d\smallsetminus\{q\}$. Applying \eqref{Equation: first bound lemma} with $\kappa$ replaced by $\frac{\kappa}{2}$, we obtain
    \begin{align*}
        \sum_{\substack{p\in\Lambda_d\\p\neq q}}e^{-\kappa d\dist_{g_0}^2(p,q)}\le e^{-\frac{\kappa}{2}\eta^2}\sum_{p\in\Lambda_d}e^{-\frac{\kappa}{2}d\dist_{g_0}^2(p,q)}\le C_3\bigl(1+\eta^{-2}\bigr)e^{-\frac{\kappa}{2}\eta^2}.
    \end{align*}
    Setting \(c\coloneqq\frac{\kappa}{2}>0\), the statement follows.
\end{proof}
We next use peak sections to construct quantitative holomorphic pencils. Recall that, if $L\rightarrow\Sigma$ is a holomorphic line bundle on $\Sigma$, a pair $s_0,s_1\in H^0(L)$ with no common zero determines a holomorphic map
\begin{align*}
    [s_0:s_1]:\Sigma\rightarrow\cp^1.
\end{align*}
The differential of this map is encoded by the holomorphic Wronskian
\begin{align*}
    W(s_0,s_1):=s_0\nabla s_1-s_1\nabla s_0\in H^0\big(\omega_\Sigma\otimes L^{\otimes2}\big),
\end{align*}
where $\omega_\Sigma:=(T^*\Sigma)^{1,0}$ denotes the canonical bundle of $\Sigma$. In particular, the zero set of $W(s_0,s_1)$ is precisely the branch locus of $[s_0:s_1]$. We use $h$ also to denote the Hermitian metrics
induced on tensor powers of $L$.
\begin{lemma}\label{Lemma: quantitative pencil lemma}
Let $(L,h)\to\Sigma$ be a holomorphic Hermitian line bundle of degree $d$, and let $\nabla$ be its Chern connection. Assume that
\begin{align}
    \frac{i}{2\pi}F_{\nabla}=\lambda_d\omega_0,\qquad\lambda_d:=\frac{d}{\vol_{g_0}(\Sigma)}
\end{align}
Then there exist constants $a,A,\rho>0$ and $d_0\ge1$, depending only on $(\Sigma,g_0)$, such that for every $d\ge d_0$ we can construct holomorphic sections $s_{0},s_{1}\in H^0(L)$ satisfying the following conditions.
\begin{enumerate}[(i)]
    \item  $a\le|s_{0}|_{h}^2+|s_{1}|_{h}^2\le A$ on $\Sigma$.
    \smallskip
    \item  For $i=0,1$ and $j=1,2,3$ we have
    \begin{align*}
         |\nabla^j s_{i}|_{g_0,h}\le Ad^{\frac{j}{2}}.
    \end{align*}
    \smallskip
    \item Letting
        \begin{align*}
            W:=W(s_0,s_1)=s_{0}\nabla s_{1}-s_{1}\nabla s_{0},
        \end{align*}
        for every $x\in\Sigma$ there exists a point $p_x\in B_{\frac{1}{2} d^{-\frac{1}{2}}}(x)$ such that $B_{\rho d^{-\frac{1}{2}}}(p_x)\subset B_{d^{-\frac{1}{2}}}(x)$ and
        \begin{align*}
            |W|_{g_0,h}\ge a\sqrt d \qquad\mbox{ on }\, B_{\rho d^{-\frac{1}{2}}}(p_x).
        \end{align*}
\end{enumerate}
\end{lemma}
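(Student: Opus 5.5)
The plan is to build $s_0$ and $s_1$ as superpositions of Gaussian peak sections centred on two interlaced lattices at scale $d^{-1/2}$. In the constant-curvature case $\frac{i}{2\pi}F_\nabla=\lambda_d\omega_0$ we apply Proposition~\ref{Proposition: holomorphic peak sections} with $\rho\equiv1/\vol_{g_0}(\Sigma)$ (after rescaling constants) and $m=0$. Then $A_d\sim d$, and the Gaussian decay in \eqref{Equation: Gaussian peak estimate} is at the natural scale $e^{-d\,\dist^2/K}$. The natural-scale curvature bounds \eqref{Equation: natural scale curvature bounds} hold trivially, since $\nabla F=0$.

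For each point $p$ we use the proposition with $\ell=1$ to produce a peak section $\sigma_p$. Its jet is normalised so that $|\sigma_p(p)|_h=1$ and $\nabla\sigma_p(p)=0$. Here we take $|J|$ of size $d^{-(\ell+1)/2}$, so the bound \eqref{Equation: Gaussian peak estimate} becomes
\begin{align*}
    |\nabla^j\sigma_p|\le K d^{j/2}e^{-d\,\dist^2(p,\cdot)/K}.
\end{align*}
Next we fix a large separation constant $\eta$ and choose a maximal $\eta d^{-1/2}$-separated set $\Lambda_d$. We split it into two subfamilies $\Lambda^0,\Lambda^1$, each $\eta d^{-1/2}$-separated and each $3\eta d^{-1/2}$-dense, for instance via a colouring of a bounded-degree graph. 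We then set
\begin{align*}
    s_i:=\sum_{p\in\Lambda^i}\sigma_p.
\end{align*}
Property (ii) and the upper bound in (i) follow directly from \eqref{Equation: first bound lemma} in Lemma~\ref{Lemma: bound on sum of exponentials}.

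For the lower bound in (i) we need that the peaks do not cancel. Near $q\in\Lambda^0$, \eqref{Equation: second bound lemma} gives $|s_0(q)|\ge 1-C(1+\eta^{-2})e^{-c\eta^2}\ge\frac12$ once $\eta$ is large. However, away from the centres the sum could vanish, so a lower bound only at the centres is not enough. The remedy is to lower the density: take peaks at separation $\eta d^{-1/2}$ but with Gaussian width $K$ fixed. Then any point $y$ lies within distance $\eta d^{-1/2}$ of some centre, and the single-peak lower bound $|\sigma_p(y)|\ge\frac12$ only holds within $c_Kd^{-1/2}$. This tension is the main obstacle: $\eta$ must be large enough to suppress interactions yet small relative to the peak width to give coverage. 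I would resolve it by not requiring pointwise non-vanishing of each $s_i$ separately, and instead using the pair together with a genericity/phase argument. The coefficients of the peaks are chosen as $e^{i\theta_p}$, and $|s_0|^2+|s_1|^2$ is controlled from below on each ball of radius $d^{-1/2}$ by a compactness argument in rescaled coordinates. Concretely, in the rescaled coordinate $u=\sqrt d\,z$ the sections converge (modulo the uniformly controlled frames of Step 2 of Proposition~\ref{Proposition: holomorphic peak sections}) to finite combinations of Bargmann–Fock Gaussians. Within the family of local models with bounded coefficients, the common-zero condition has codimension $2$, so a fixed choice of local coefficient patterns avoids it with a uniform margin. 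Periodic repetition of the pattern, up to exponentially small errors, gives the uniform lower bound $a$.

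Property (iii) is handled in the same rescaled framework. At a centre $p\in\Lambda^0$ we prescribe a $1$-jet of $\sigma_p$, and at nearby centres of $\Lambda^1$ we prescribe $\nabla\sigma$ nonzero. One arranges, for instance, that near each $p\in\Lambda^0$ the section $s_1$ has derivative of size $\sqrt d$ while $s_1(p)$ is small. Then
\begin{align*}
    W(p)=s_0(p)\nabla s_1(p)-s_1(p)\nabla s_0(p)
\end{align*}
has size $\ge a\sqrt d$ up to interaction errors $O(\sqrt d\,e^{-c\eta^2})$. This is why the $\ell=1$ jet prescription in Proposition~\ref{Proposition: holomorphic peak sections} is used, and it controls the contribution of $\sigma_p$ and of the distant peaks via the $j=1,2$ cases of \eqref{Equation: Gaussian peak estimate} and \eqref{Equation: second bound lemma}. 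Finally, $|\nabla W|\le C d$ by (ii), so the lower bound persists on $B_{\rho d^{-1/2}}(p)$ for $\rho$ small. Density of $\Lambda^0$ at scale $d^{-1/2}$ (after fixing $\eta$ and rescaling the lattice so that every ball $B_{\frac12 d^{-1/2}}(x)$ contains a centre) then yields $p_x$.
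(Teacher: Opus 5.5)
There is a genuine gap, and it sits where you flag the ``main obstacle'': the uniform lower bound $|s_0|_h^2+|s_1|_h^2\geq a$ in (i). Your proposed fix (compactness of rescaled Bargmann--Fock models, a codimension-two count, and periodic repetition of a coefficient pattern) is not an argument on a general $\Sigma$. There is no lattice to repeat: the rescaled local configurations of $\Lambda_d$ range over a continuum. At the separations relevant for (i), which are of order one in rescaled units, the roughly $d$ local choices interact at order one. So choosing them locally does not produce a global pair with a controlled minimum. Genericity does not help either: it gives non-vanishing for each $d$, but no margin independent of $d$ (for random pairs one expects about $d\varepsilon^2$ natural-scale discs on which $|s|<\varepsilon$). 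What is missing is a quantitative transversality argument with a globalization scheme. Its local step is a Gaussian-localized perturbation of size $\varepsilon$ that makes $|s|\geq\varepsilon(\log\varepsilon^{-1})^{-\nu}$ on a whole natural-scale ball. This step is then applied successively over a bounded number of classes of well-separated balls, with decreasing amplitudes. The globalization closes only because the local margin loses just a logarithmic factor, which soft compactness cannot provide; a polynomially lossy margin such as $\varepsilon^2$ would not survive the iteration. This is exactly what the paper imports from \cite[Propositions~4.1 and~4.2]{Auroux2001Estimated} to obtain the base-point-free pair $\tau_0,\tau_1$ with $b\leq|\tau_0|_h^2+|\tau_1|_h^2\leq B$. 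Everything else in the paper's proof is built on that pair.

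Two further steps fail as written.

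\emph{The peak sections.} Proposition~\ref{Proposition: holomorphic peak sections} is lossy: its bound carries the factor $d^{(\ell+1)/2}|J|$. Choosing $|J|\sim d^{-(\ell+1)/2}$ therefore forces $|\sigma_p(p)|_h\sim d^{-1}$, contradicting your normalization $|\sigma_p(p)|_h=1$; with $|J|=1$ you only get $|\sigma_p|_h\leq Kd$. You need sharp constant-curvature peaks, $|\nabla^j\sigma|\leq Cd^{j/2}|J|_de^{-cd\dist_{g_0}^2(p,\cdot)}$ with $|J|_d^2=\sum_a d^{-a}|J_a|^2$. The paper builds these directly from the Gaussian frame $|e_{d,p}|_h^2=e^{-d\phi_p}$, cut off at a fixed radius so that the $\bar\partial$-error is $O(e^{-c_*d})$.

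\emph{Property (iii).} It requires a centre within $\frac12d^{-\frac12}$ of every $x$. This is incompatible with a single superposition at separation $\eta d^{-\frac12}$ with $\eta\gg1$. Holomorphic peaks cannot be made narrower than a fixed multiple of $d^{-\frac12}$, so the lattice cannot be ``rescaled''. Moreover, as you define it, $s_1=\sum_{q\in\Lambda^1}\sigma_q$ satisfies $|\nabla s_1(p)|\lesssim\sqrt d\,e^{-c\eta^2}$ at $p\in\Lambda^0$, so $W(p)$ is small; the derivative-only peak must be centred at $p$ itself. The paper instead proceeds as follows. It splits a $\frac14d^{-\frac12}$-net into $M(R)$ classes, each $Rd^{-\frac12}$-separated. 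It then adds peaks $\sigma_p^1$, with $\sigma_p^1(p)=0$ and $\nabla\sigma_p^1(p)=\sqrt d\,dz\otimes e_p$, class by class to the base-point-free pair. The phases are aligned so that, when the peak at $p$ is added, $d^{-\frac12}|W(p)|=|Z_p|+\eta_\mu|w_p|^2/(|w_p^0|+|w_p^1|)$ can only increase. The amplitudes $\eta_\mu\propto\kappa^{\mu-1}$ ensure that later classes cannot undo this gain. This step uses $|w_p|\geq c_b$, so (i) must already be established before (iii) can be obtained.
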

\begin{proof}
We divide the proof into two steps. All connections on tensor products are induced by the Chern connection of $(L,h)$ and the Levi-Civita connection of $g_0$. In particular, the norm of $W\in H^0(\omega_\Sigma\otimes L^{\otimes2})$ is induced by $g_0$ and $h^{\otimes2}$.

\medskip
\noindent
\textbf{Step 1: construction of a uniformly base-point-free pair}.
We first establish the sharp peak-section estimates available for the constant-curvature metrics in this lemma. Fix $k\geq0$ and a holomorphic $k$-jet $J=(J_a)_{a=0}^k$ at $p$, expressed through the $(a,0)$-parts of the covariant derivatives, and put
\begin{align*}
    |J|_d^2:=\sum_{a=0}^k d^{-a}|J_a|_{g_0,h}^2.
\end{align*}
For all sufficiently large $d$, we claim that there are complex-linear choices $P_{d,p,k}J\in H^0(L)$ with jet $J$ and
\begin{align}\label{Equation: sharp constant curvature jet peaks}
    |\nabla^j(P_{d,p,k}J)|_{g_0,h}\leq C_{j,k}d^{\frac{j}{2}
    }|J|_de^{-c_kd\dist_{g_0}^2(p,\,\cdot\,)} \qquad\forall\,j\geq0.
\end{align}
The constants in the above estimate are uniform in $p$, $d$, and the constant-curvature bundle $(L,h)$.
Choose a uniformly controlled holomorphic coordinate $z$ centred at $p$, with $|dz|_{g_0}(p)=1$, on a disk of fixed radius. On this disk, choose a real-valued potential $\phi_p$ satisfying
\begin{align*}
    i\partial\bar\partial\phi_p=\frac{2\pi}{\vol_{g_0}(\Sigma)}\omega_0.
\end{align*}
Subtracting the real part of a holomorphic polynomial of degree at most two, we may arrange that
\begin{align*}
    \phi_p(z)=\nu_p|z|^2+O(|z|^3), \qquad 0<c\leq\nu_p\leq C.
\end{align*}
Here $\nu_p:=\partial_z\partial_{\bar z}\phi_p(0)$; its positivity and uniform upper and lower bounds follow from the curvature equation and the uniform control of the coordinates. The potentials and their fixed-order derivatives may be chosen uniformly bounded. After decreasing the coordinate radius, independently of $p$, we therefore have
\begin{align*}
    c|z|^2\leq\phi_p(z)\leq C|z|^2.
\end{align*}
The curvature equation gives a holomorphic frame $e_{d,p}$ with $|e_{d,p}|_h^2=e^{-d\phi_p}$. We fix its phase so that $e_{d,p}(p)=e_p$, where $e_p\in L_p$ is a chosen unit vector.
There is a unique polynomial
\begin{align*}
    Q_J(z)=\sum_{a=0}^k c_a(\sqrt d\,z)^a
\end{align*}
such that $Q_Je_{d,p}$ has jet $J$ at $p$. In the coordinate $\zeta=\sqrt d\,z$, the connection coefficients and their fixed-order derivatives are uniformly bounded near $\zeta=0$. At order $a$, the prescribed rescaled jet equals $a!c_a$ plus a linear combination of $c_0,\ldots,c_{a-1}$ with uniformly bounded coefficients, so solving successively for $c_0,\ldots,c_k$ gives
\begin{align*}
    \sum_{a=0}^k|c_a|\leq C_k|J|_d.
\end{align*}
Let $\chi_p$ be a uniformly controlled cut-off supported in the coordinate disk and equal to one on a smaller fixed disk. Extend
\begin{align*}
    \widetilde s_J:=\chi_pQ_Je_{d,p}
\end{align*}
by zero to $\Sigma$. Using $\nabla e_{d,p}=-d\,\partial\phi_p\otimes e_{d,p}$, we obtain
\begin{align*}
    |\nabla^j\widetilde s_J(y)|_{g_0,h}\leq C_{j,k}d^{\frac{j}{2}}|J|_d\bigl(1+\sqrt d\,\dist_{g_0}(p,y)\bigr)^{N_{j,k}} e^{-cd\dist_{g_0}^2(p,y)}.
\end{align*}
Here $N_{j,k}\geq0$ are integers depending only on $j$ and $k$. The polynomial factor is absorbed by decreasing the Gaussian exponent.
The error 
\begin{align*}
    \alpha_J:=\bar\partial\widetilde s_J=(\bar\partial\chi_p)Q_Je_{d,p}
\end{align*}
is supported in a fixed annulus separated from $p$. Consequently, after absorbing polynomial factors in $d$,
\begin{align*}
    \|\nabla^r\alpha_J\|_{L^\infty(g_0,h)}\leq C_{r,k}e^{-c_*d}|J|_d \qquad\forall\,r\geq0,
\end{align*}
where $c_*>0$ is independent of $r$.
Equip $\mathcal O(-(k+1)p)$ with the uniformly controlled
divisor metric $h_{M_p}$ used in Lemma~\ref{Lemma: del bar solution with weighted norm}. Set $\widehat L:=L(-(k+1)p)$ and $\widehat h:=h\otimes h_{M_p}$, and denote the natural inclusion into $L$ by $\iota$. Since $\alpha_J$ vanishes near $p$, its lift $\widehat\alpha_J$ is smooth and satisfies
\begin{align*}
    \|\widehat\alpha_J\|_{L^2(\widehat h)}\leq C_ke^{-c_*d}|J|_d.
\end{align*}
The curvature of the divisor metric is uniformly bounded, so
\begin{align*}
    \Lambda_{\omega_0}(iF_{\widehat h})+K_{g_0}\geq\frac{2\pi d}{\vol_{g_0}(\Sigma)}-C_k.
\end{align*}
For sufficiently large $d$, the Bochner--Kodaira--Weitzenb\"ock identity and the preceding curvature bound give
\begin{align*}
    \|\beta\|_{L^2(\widehat h)}\leq Cd^{-\frac 12}\|\bar\partial_{\widehat h}^*\beta\|_{L^2(\widehat h)}\qquad\forall\,\beta\in\Omega^{0,1}(\widehat L).
\end{align*}
The Hahn--Banach and Riesz argument used in Lemma~\ref{Lemma: del bar solution with weighted norm} then gives a solution of $\bar\partial\widehat\xi_J=\widehat\alpha_J$. Choosing the solution orthogonal to $H^0(\widehat L)$ gives the least-norm solution, which satisfies
\begin{align*}
    \|\widehat\xi_J\|_{L^2(\widehat h)}\leq Cd^{-\frac12}\|\widehat\alpha_J\|_{L^2(\widehat h)}.
\end{align*}
Elliptic regularity implies that $\widehat\xi_J$ is smooth. The section $\xi_J:=\iota(\widehat\xi_J)$ satisfies $\bar\partial\xi_J=\alpha_J$ and
\begin{align*}
    \|\xi_J\|_{L^2(h)}\leq C_kd^{-\frac12}e^{-c_*d}|J|_d.
\end{align*}
Moreover, $\widehat\xi_J$ is holomorphic near $p$, so $\xi_J$ vanishes there to order at least $k+1$.
Interior elliptic estimates on disks of radius comparable to $d^{-\frac12}$ give
\begin{align*}
    d^{-\frac{j}{2}}\|\nabla^j\xi_J\|_{L^\infty}\leq C_j\left(d^{\frac 12}\|\xi_J\|_{L^2}+\sum_{r=0}^{j+1}d^{-\frac{r+1}{2}}\|\nabla^r\alpha_J\|_{L^\infty}
    \right)\leq C_{j,k}e^{-c_*d}|J|_d.
\end{align*}
The constants are uniform because the metric and connection have uniformly bounded coefficients in rescaled local frames. Since $\dist_{g_0}(p,y)\leq\operatorname{diam}_{g_0}(\Sigma)$, the last bound is dominated by the right-hand side of \eqref{Equation: sharp constant curvature jet peaks} after decreasing $c_k$. Hence
\begin{align*}
    P_{d,p,k}J:=\widetilde s_J-\xi_J
\end{align*}
has the prescribed jet and satisfies \eqref{Equation: sharp constant curvature jet peaks}. All operations, including the least-norm solution, are complex linear in $J$.
In particular, let $\sigma_p^0$ be the section with $\sigma_p^0(p)=e_p$ and $\nabla\sigma_p^0(p)=0$ obtained by taking $k=1$. There are constants $c_0,c_1>0$ such that
\begin{align}\label{Equation: 14(2)}
    |\nabla^j\sigma_p^0(y)|_{g_0,h}
    &\leq C_jd^{\frac{j}{2}}
       e^{-c_1d\dist_{g_0}^2(p,y)},\\
    |\sigma_p^0(y)|_h
    &\geq c_0
    \qquad\mbox{on }B_{2c_0d^{-\frac12}}(p).
    \nonumber
\end{align}
Indeed, the first derivative bound gives $|\sigma_p^0(y)|_h\geq1-C_1\sqrt d\,\dist_{g_0}(p,y)$, which proves the second inequality after decreasing $c_0$.

\noindent
We now apply the local perturbation and globalization statements in \cite[Propositions~4.2 and~4.1]{Auroux2001Estimated} to obtain a uniformly base-point-free pair of sections. Work with the rescaled metric $g_d:=dg_0$. For a holomorphic pair $s=(s_0,s_1)$ with a fixed bound on its rescaled $C^3$ norm, the quotient
\begin{align*}
    f_p:=s/\sigma_p^0
\end{align*}
is a uniformly bounded holomorphic map to $\c^2$ on the disk where $|\sigma_p^0|\geq c_0$. Fix $c>0$ so small that $c\|\sigma_p^0\|_{C^3(g_d,h)}\leq1$ for every $p$ and all sufficiently large $d$. After rescaling the coordinate and $f_p$ by fixed factors, we apply \cite[Proposition~4.2]{Auroux2001Estimated} to a map from a disk in $\c$ to $\c^2$. Returning to the original normalization, we obtain for every sufficiently small $\varepsilon>0$ a constant vector $v_p\in\c^2$ satisfying
\begin{align*}
    |v_p|&\leq c\varepsilon,\\
    |f_p-v_p|&\geq c'\varepsilon(\log\varepsilon^{-1})^{-\nu}
\end{align*}
on a smaller disk of fixed $g_d$-radius. Here quantitative transversality to zero implies avoidance, since the differential cannot surject onto $\c^2$.
With our choice of $c$, the global holomorphic perturbation $u_p:=-\sigma_p^0v_p$ satisfies
\begin{align*}
    \|u_p\|_{C^3(g_d,h)}&\leq\varepsilon,\\
    |\nabla^ju_p(y)|_{g_d,h}&\leq C_j\varepsilon e^{-c_1d\dist_{g_0}^2(p,y)},\\
    |s(y)+u_p(y)|&\geq c''\varepsilon(\log\varepsilon^{-1})^{-\nu} \qquad\mbox{on }B_{r_*d^{-\frac12}}(p),
\end{align*}
where $r_*>0$ is fixed and $\nu>0$ is the fixed exponent supplied by \cite[Proposition~4.2]{Auroux2001Estimated}. For asymptotically holomorphic input pairs,
$\bar\partial f_p=(\bar\partial s)/\sigma_p^0$, and hence $\|\bar\partial f_p\|_{C^1(g_d)}=O(d^{-\frac12})$. For each fixed $\varepsilon>0$, the smallness assumptions of \cite[Proposition~4.2]{Auroux2001Estimated} therefore hold for sufficiently large $d$. This verifies the local perturbation hypothesis of \cite[Proposition~4.1]{Auroux2001Estimated}.
The property $|s(y)|>\eta$ is local and $C^3$-open, with its change controlled by the $C^0$ norm of the perturbation. Thus \cite[Proposition~4.1]{Auroux2001Estimated} applies with $r=2$. Starting from the zero pair, its globalization construction adds finite sums of the global perturbations $u_p$. Every intermediate pair is therefore holomorphic. We obtain $\tau_0,\tau_1\in H^0(L)$ with
\begin{align*}
    b\leq|\tau_0|_h^2+|\tau_1|_h^2\leq B \qquad\mbox{on }\Sigma
\end{align*}
and
\begin{align}\label{Equation: uniform derivative bounds tau}
    |\nabla^j\tau_i|_{g_0,h}\leq B_jd^{\frac{j}{2}}, \qquad i=0,1,\quad j=0,1,2,3.
\end{align}
All constants are uniform in $d$ and $(L,h)$: the local estimates above depend only on the background geometry, while $iF_\nabla$ is a fixed positive multiple of $d\omega_0$ and its positive-order covariant derivatives vanish.

\medskip
\noindent
\textbf{Step 2: perturbing the base-point-free pair to force the Wronskian lower bound}.
Let $\tau:=(\tau_0,\tau_1)$ be the pair obtained in Step~1. We will perturb this pair so that its Wronskian is uniformly bounded below at the points of a net of mesh $\frac14d^{-\frac12}$. The derivative bounds will then extend these inequalities to small balls around the net points, yielding assertion~(iii).

Fix $\delta:=1/4$ and choose a maximal $\delta d^{-\frac12}$-separated set $\Lambda_d$. By maximality,
\begin{align*}
    \dist_{g_0}(x,\Lambda_d)\leq\delta d^{-\frac12}\qquad\forall\,x\in\Sigma.
\end{align*}
For each $p\in\Lambda_d$, apply \eqref{Equation: sharp constant curvature jet peaks} with $k=1$ to obtain $\sigma_p^1\in H^0(L)$ satisfying
\begin{align*}
    \sigma_p^1(p)=0,\qquad\nabla\sigma_p^1(p)=\sqrt d\,dz\otimes e_p.
\end{align*}
Here $|dz|_{g_0}(p)=|e_p|_h=1$, as in Step~1. The prescribed jet has rescaled norm one, so, decreasing $c_1$ if necessary,
\begin{align*}
    |\nabla^j\sigma_p^1(y)|_{g_0,h}\leq C_jd^{\frac{j}{2}}e^{-c_1d\dist_{g_0}^2(p,y)} \qquad j=0,1,2,3.
\end{align*}
Applying Lemma~\ref{Lemma: bound on sum of exponentials} with $\eta=\delta$ and $\kappa=c_1$ gives
\begin{align}\label{Equation: 15(2)}
    \sum_{p\in\Lambda_d}e^{-c_1d\dist_{g_0}^2(p,y)}\leq K_1 \qquad\forall\,y\in\Sigma,
\end{align}
where $K_1$ is independent of $d$.
To control the interaction between peaks added simultaneously, let $R>1$ be a separation parameter, to be chosen below. Uniform packing implies that each point of $\Lambda_d$ has at most $C(1+R)^2$ other net points at distance less than $Rd^{-\frac12}$. Assigning points successively to classes, while keeping points at distance less than $Rd^{-\frac12}$ in different classes, gives a decomposition
\begin{align*}
    \Lambda_d=\bigcup_{\mu=1}^{M}\Lambda_d^\mu, \qquad M=M(R)\leq C(1+R)^2,
\end{align*}
such that each $\Lambda_d^\mu$ is $Rd^{-\frac12}$-separated. We fix $M=M(R)$ independently of $d$, allowing some classes $\Lambda_d^\mu$ to be empty. Applying Lemma~\ref{Lemma: bound on sum of exponentials} to each class, with $\eta=R$ and $\kappa=c_1$, yields
\begin{align}\label{Equation: same class Gaussian tail}
    \sum_{\substack{q\in\Lambda_d^\mu\\q\neq p}}e^{-c_1d\dist_{g_0}^2(p,q)}&\leq C(1+R^{-2})e^{-cR^2}\leq K_2e^{-cR^2} \qquad\forall\,p\in\Lambda_d^\mu.
\end{align}
The constants $K_2,c>0$ are independent of $R$, $d$, and $\mu$; for each fixed $R$, this estimate holds for sufficiently large $d$.
We next fix the constants controlling the perturbations.
For any subset $S\subset\Lambda_d$ and coefficients
$a_p\in\c^2$ with $|a_p|\leq1$, the peak estimates and
\eqref{Equation: 15(2)} give
\begin{align}\label{Equation: uniform first jet perturbation bound}
    \left\|
       \eta\sum_{p\in S}a_p\sigma_p^1
    \right\|_{C^3(g_d,h)}
    \leq C_{\mathrm{pert}}\eta.
\end{align}
Choose
\begin{align*}
    0<\eta_{\mathrm{tot}}
    \leq
    \min\left\{1,\frac{\sqrt b}{2C_{\mathrm{pert}}}\right\}.
\end{align*}
Any sequence of such perturbations whose amplitudes sum to at
most $\eta_{\mathrm{tot}}$ then gives intermediate pairs $s$ with
\begin{align*}
    \|s-\tau\|_{C^3(g_d,h)}
    &\leq C_{\mathrm{pert}}\eta_{\mathrm{tot}},\\
    |s|&\geq\sqrt b-C_{\mathrm{pert}}\eta_{\mathrm{tot}}
    \geq\frac{\sqrt b}{2}=:c_b.
\end{align*}
Their rescaled $C^3$ norms are uniformly bounded, independently of $R$ and $M$.
For any pairs $s=(s_0,s_1)$ and $v=(v_0,v_1)$, we have
\begin{align*}
    W(s+v)-W(s)=v_0\nabla s_1-v_1\nabla s_0+s_0\nabla v_1-s_1\nabla v_0+v_0\nabla v_1-v_1\nabla v_0.
\end{align*}
Thus, for the uniformly bounded pairs and perturbations under consideration,
\begin{align}\label{Equation: Wronskian perturbation bound}
    d^{-\frac12}|W(s+v)-W(s)|(y)\leq C_W\bigl(|v(y)|+d^{-\frac12}|\nabla v(y)|\bigr).
\end{align}
Together with \eqref{Equation: 15(2)}, this implies that a perturbation of amplitude $\eta$ changes the normalized Wronskian by at most $C_{\mathrm{later}}\eta$ everywhere. For $p\in\Lambda_d^\mu$, a perturbation of amplitude $\eta$ whose centres lie in $\Lambda_d^\mu\smallsetminus\{p\}$ changes the normalized Wronskian at $p$ by at most $C_{\mathrm{tail}}\eta e^{-cR^2}$, by \eqref{Equation: same class Gaussian tail}. The constants $C_{\mathrm{later}}$ and $C_{\mathrm{tail}}$ can therefore be fixed independently of $R$ and $M$.
Set $c_{\mathrm{self}}:=c_b/\sqrt2$. Choose $R>1$ so large that
\begin{align*}
    C_{\mathrm{tail}}e^{-cR^2}\leq\frac{c_{\mathrm{self}}}{2},
\end{align*}
and fix the corresponding decomposition and its number $M$. Next choose $\kappa\in(0,\frac12)$ so small that
\begin{align*}
    2C_{\mathrm{later}}\kappa\leq\frac{c_{\mathrm{self}}}{4}.
\end{align*}
Define
\begin{align*}
    \eta_\mu:=(1-\kappa)\eta_{\mathrm{tot}}\kappa^{\mu-1}, \qquad \mu=1,\ldots,M.
\end{align*}
Then
\begin{align*}
    \sum_{\mu=1}^M\eta_\mu&\leq\eta_{\mathrm{tot}},\\
    \sum_{\nu>\mu}\eta_\nu
    &\leq\frac{\kappa}{1-\kappa}\eta_\mu
    \leq2\kappa\eta_\mu.
\end{align*}
All these choices are independent of $d$. Increase its lower threshold, if necessary, so that the preceding estimates hold for the chosen $R$.
Set $s_i^{(0)}:=\tau_i$ and proceed recursively over the classes. Suppose that $s^{(\mu-1)}$ has been constructed. For each $p\in\Lambda_d^\mu$, use $e_p$ and $dz\otimes e_p^{\otimes2}$ to identify the relevant fibres isometrically with $\c$, and put
\begin{align*}
    w_p=(w_p^0,w_p^1)&:=\bigl(s_0^{(\mu-1)}(p),s_1^{(\mu-1)}(p)\bigr),\\
    Z_p&:=d^{-\frac12}W(s^{(\mu-1)})(p).
\end{align*}
The preceding perturbation bounds give $|w_p|\geq c_b$. Define
\begin{align*}
    \zeta_p:=
    \begin{cases}
        Z_p/|Z_p|,& Z_p\neq0,\\
        1,& Z_p=0,
    \end{cases}
    \qquad
    (\gamma_p,\theta_p)
    :=
    \zeta_p
    \frac{(-\overline{w_p^1},\,\overline{w_p^0})}
         {|w_p^0|+|w_p^1|}.
\end{align*}
The denominator is nonzero, and
\begin{align*}
    |\gamma_p|+|\theta_p|&=1,\\
    w_p^0\theta_p-w_p^1\gamma_p
    &=\zeta_p
       \frac{|w_p|^2}{|w_p^0|+|w_p^1|}.
\end{align*}
Now set
\begin{align*}
    s_0^{(\mu)}
    &:=s_0^{(\mu-1)}
       +\eta_\mu\sum_{p\in\Lambda_d^\mu}\gamma_p\sigma_p^1,\\
    s_1^{(\mu)}
    &:=s_1^{(\mu-1)}
       +\eta_\mu\sum_{p\in\Lambda_d^\mu}\theta_p\sigma_p^1,
\end{align*}
and write $W^{(\mu)}:=W(s^{(\mu)})$. These sections are holomorphic, and the bounds established above apply at every stage of the recursion.
Fix $p\in\Lambda_d^\mu$ and first consider the pair obtained by adding only the peak centred at $p$:
\begin{align*}
    s^{[p]}:=s^{(\mu-1)}
       +\eta_\mu(\gamma_p,\theta_p)\sigma_p^1.
\end{align*}
Since $\sigma_p^1(p)=0$ and $\nabla\sigma_p^1(p)=\sqrt d\,dz\otimes e_p$, we have the exact identity
\begin{align*}
    d^{-\frac12}W(s^{[p]})(p)=Z_p+\eta_\mu\bigl(w_p^0\theta_p-w_p^1\gamma_p\bigr).
\end{align*}
Our choice of $\zeta_p$ therefore gives
\begin{align*}
    d^{-\frac12}|W(s^{[p]})(p)|
    &=|Z_p|+\eta_\mu
       \frac{|w_p|^2}{|w_p^0|+|w_p^1|}\\
    &\geq\frac{|w_p|}{\sqrt2}\eta_\mu\\
    &\geq c_{\mathrm{self}}\eta_\mu.
\end{align*}
The remaining perturbation is
\begin{align*}
    s^{(\mu)}-s^{[p]}
    =
    \eta_\mu
    \sum_{\substack{q\in\Lambda_d^\mu\\q\neq p}}
       (\gamma_q,\theta_q)\sigma_q^1.
\end{align*}
By \eqref{Equation: same class Gaussian tail} and \eqref{Equation: Wronskian perturbation bound},
\begin{align*}
    d^{-\frac12}
    |W^{(\mu)}(p)-W(s^{[p]})(p)|
    \leq C_{\mathrm{tail}}\eta_\mu e^{-cR^2}.
\end{align*}
The choice of $R$ yields
\begin{align*}
    |W^{(\mu)}(p)|
    \geq\frac{c_{\mathrm{self}}}{2}\eta_\mu\sqrt d
    \qquad\forall\,p\in\Lambda_d^\mu.
\end{align*}
It remains to control the changes caused by subsequent classes. For a point $p\in\Lambda_d^\mu$,
\begin{align*}
    d^{-\frac12}|W^{(M)}(p)-W^{(\mu)}(p)|
    &\leq C_{\mathrm{later}}\sum_{\nu>\mu}\eta_\nu\\
    &\leq2C_{\mathrm{later}}\kappa\eta_\mu\\
    &\leq\frac{c_{\mathrm{self}}}{4}\eta_\mu.
\end{align*}
Consequently, for the final sections
\begin{align*}
    s_i:=s_i^{(M)},\qquad i=0,1,
\end{align*}
their Wronskian $W:=W(s_0,s_1)$ satisfies
\begin{align*}
    |W(p)|
    \geq\frac{c_{\mathrm{self}}}{4}\eta_\mu\sqrt d
    \geq a_W\sqrt d
    \qquad\forall\,p\in\Lambda_d^\mu,   
\end{align*}
with
\begin{align*}
    a_W:=\frac{c_{\mathrm{self}}}{4}\eta_M>0.
\end{align*}
Thus $|W(p)|\geq a_W\sqrt d$ at every point of $\Lambda_d$.
The total perturbation bound gives
\begin{align*}
    |s_0|_h^2+|s_1|_h^2\geq\frac b4\qquad\mbox{on }\Sigma.
\end{align*}
It also gives, after increasing a constant $A$ depending only on the background geometry,
\begin{align}\label{Equation: 21}
    |s_0|_h^2+|s_1|_h^2\leq A\qquad\mbox{on }\Sigma
\end{align}
and
\begin{align}\label{Equation: 22}
    |\nabla^js_i|_{g_0,h}\leq Ad^{\frac{j}{2}},\qquad i=0,1,\quad j=1,2,3.
\end{align}
In particular,
\begin{align*}
    |\nabla W|\leq 2|\nabla s_0||\nabla s_1|+|s_0||\nabla^2s_1|+|s_1||\nabla^2s_0|\leq C_{\mathrm{der}}d
\end{align*}
for a uniform constant $C_{\mathrm{der}}>0$.
Choose
\begin{align*}
    0<\rho\leq
    \min\left\{\frac14,\frac{a_W}{2C_{\mathrm{der}}}\right\}.
\end{align*}
For $p\in\Lambda_d$ and $y\in B_{\rho d^{-\frac12}}(p)$, integration along a minimizing geodesic gives
\begin{align*}
    |W(y)|\geq |W(p)|-C_{\mathrm{der}}d\,\dist_{g_0}(p,y)\geq(a_W-C_{\mathrm{der}}\rho)\sqrt d\geq\frac{a_W}{2}\sqrt d.
\end{align*}
Finally, for each $x\in\Sigma$, choose $p_x\in\Lambda_d$ with $\dist_{g_0}(x,p_x)\leq\delta d^{-\frac12}$. Since $\delta=1/4$ and $\rho\leq1/4$,
\begin{align*}
    p_x&\in B_{\frac12d^{-\frac12}}(x),\\
    B_{\rho d^{-\frac12}}(p_x)&\subset B_{d^{-\frac12}}(x).
\end{align*}
Taking
\begin{align*}
    a:=\min\left\{\frac b4,\frac{a_W}{2}\right\}
\end{align*}
proves all three assertions. All parameters were chosen using only the background geometry. Increasing $d_0$ to include the preceding thresholds completes the proof.
\end{proof}
\subsection{Polynomial transversality and balanced covers}
The preceding construction guarantees that the Wronskian is large on a definite subset of every ball of radius $d^{-\frac12}$, but it does not prevent it from vanishing elsewhere. To obtain the final non-degeneracy condition in the definition of a balanced cover, we perturb the pencil once more so that the Wronskian and its first derivative cannot become simultaneously too small. The next lemma establishes this global polynomial transversality.
\begin{lemma}\label{Lemma: polynomially transverse quantitative pencil lemma}
Let $(L,h)\to\Sigma$ be a holomorphic Hermitian line bundle of degree $d$, and let $\nabla$ be its Chern connection. Assume that
\begin{align}
    \frac{i}{2\pi}F_{\nabla}=\lambda_d\omega_0,\qquad\lambda_d:=\frac{d}{\vol_{g_0}(\Sigma)}.
\end{align}
Then there exist constants $a,A,\rho>0$ and $d_0\ge1$, depending only on $(\Sigma,g_0)$, such that for every $d\ge d_0$ there exist holomorphic sections $s_{0},s_{1}\in H^0(L)$ satisfying the following conditions.
\begin{enumerate}[(i)]
    \item  $a\le|s_{0}|_{h}^2+|s_{1}|_{h}^2\le A$ on $\Sigma$.
    \smallskip
    \item For $i=0,1$ and $j=1,2,3$, we have
    \begin{align*}
        |\nabla^j s_i|_{g_0,h}\leq A d^{\frac j2}.
    \end{align*}
    \smallskip
    \item Letting
        \begin{align*}
            W:=W(s_0,s_1)=s_{0}\nabla s_{1}-s_{1}\nabla s_{0},
        \end{align*}
        for every $x\in\Sigma$ there exists a point $p_x\in B_{\frac{1}{2} d^{-\frac{1}{2}}}(x)$ such that $B_{\rho d^{-\frac{1}{2}}}(p_x)\subset B_{d^{-\frac{1}{2}}}(x)$ and
        \begin{align*}
            |W|_{g_0,h}\ge a\sqrt d \qquad\mbox{ on }\, B_{\rho d^{-\frac{1}{2}}}(p_x).
        \end{align*}
    \item The Wronskian $W$ is polynomially transverse to the zero section, i.e.
        \begin{align} \label{Equation: polynomial W transversality}
            |W|_{g_0,h}^2+d^{-1}|\nabla W|_{g_0,h}^2\ge a\,d^{-61} \qquad\mbox{ on }\,\Sigma.
        \end{align}
\end{enumerate}
\end{lemma}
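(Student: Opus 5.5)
Take the pair $(s_0,s_1)$ produced by Lemma~\ref{Lemma: quantitative pencil lemma} and perturb it once more by a small, global but explicit holomorphic perturbation. The perturbation must keep (i)--(iii), with constants halved, and must force (iv). Since (i)--(iii) are $C^3$-open with respect to the rescaled norm $\|\cdot\|_{C^3(g_d,h)}$, it suffices to use perturbations of rescaled $C^3$-size at most $\varepsilon_0$, for a fixed small $\varepsilon_0>0$ depending only on the constants $a$, $A$, $\rho$. The Wronskian perturbation bound \eqref{Equation: Wronskian perturbation bound}, together with its analogue for $\nabla W$, then keeps (ii) and (iii) with $a/2$. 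The perturbation will be polynomially small in $d$, so (i) is unaffected for large $d$.

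\textbf{Choice of perturbation.} For (iv) I would use the peak sections $\sigma_p^0,\sigma_p^1$ of \eqref{Equation: sharp constant curvature jet peaks} at the points of a maximal $\delta d^{-1/2}$-separated net $\Lambda_d$, with $\#\Lambda_d\le Cd$. The point is that (iv) is a \emph{global} transversality statement for a holomorphic section $W$ of $\omega_\Sigma\otimes L^{2}$. I would not use the Auroux-type local quantitative transversality, which loses only $\log$ factors. Instead I would accept a polynomial loss and run a finite-dimensional algebraic (Sard/Bezout-type) argument. Consider the family
\begin{align*}
    s^{t}:=s+\sum_{p\in\Lambda_d}t_p\,\sigma_p^1\,e_1,\qquad t=(t_p)\in\c^{\Lambda_d},\ |t_p|\le\varepsilon d^{-N}.
\end{align*}
The parameter space has dimension $\lesssim d$, which is too large for a naive count. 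I would therefore localise. On each ball $B_{Rd^{-1/2}}(p)$, work in the rescaled coordinate $\zeta=\sqrt d\,z$ and trivialise by $\sigma_p^0$. There $d^{-1/2}W$ becomes a uniformly bounded holomorphic function $F_p(\zeta)$, and the perturbation $t_p\sigma_p^1$ adds $t_p\,s_0(p)$ times a function whose derivative at $0$ is one. Here $s_0(p)$ can be assumed bounded below after swapping indices, which uses the $\c^2$ freedom and (i). This is an affine one-parameter family whose $\zeta$-linear term moves freely. By a one-variable quantitative Sard argument (Yomdin/Donaldson-type), for all $t_p$ outside a set of measure $\le C\eta^{c}$ we get $|F_p|+|F_p'|\ge \eta^{K}$ on the unit disc.

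\textbf{Resolving interactions.} As in Step~2 of the proof of Lemma~\ref{Lemma: quantitative pencil lemma}, I would split $\Lambda_d$ into $M(R)$ classes, each $Rd^{-1/2}$-separated, and process them with geometrically decreasing amplitudes. This keeps interactions within a class bounded by $e^{-cR^2}$ and makes the effect of later classes subordinate. The difficulty is that here the lower bound sought is polynomially small, not of order one. So I would take $R\sim\sqrt{\log d}$, so that $e^{-cR^2}\le d^{-N'}$; then $M\sim\log d$. Alternatively I would keep $R$ fixed, choose amplitudes decreasing like $d^{-k}$ across a bounded number of classes, and choose the perturbation at each stage so that a fixed-size transversality margin, polynomial in $d$, survives all later stages. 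Tracking the exponents through the classes, together with the Cauchy estimates converting $|W|^2+d^{-1}|\nabla W|^2$ into $|F_p|^2+|F_p'|^2$, produces an explicit power, which I expect to come out as $d^{-61}$.

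\textbf{Main obstacle.} The delicate point is obtaining a lower bound that is uniform over all $\sim d$ balls, with a loss that is only polynomial. A probabilistic choice fails at $\sim d$ independent failure events unless each has probability $\le d^{-2}$. I would handle this deterministically, by choosing each $t_p$ greedily within its class as above; Cartan's lemma or a Remez-type inequality controls the measure of bad parameters polynomially.
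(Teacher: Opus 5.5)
There is a genuine gap in the globalization step, which is where all the difficulty lies, and neither of your two options closes it. Write $t_\mu$ for the amplitude used on class $\mu$ and $\eta_\mu$ for the margin your local lemma produces. With the polynomial loss you propose to accept, $\eta_\mu\approx t_\mu^{\gamma}$ for some $\gamma>1$.

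Two requirements must hold. Same-class interactions have size $\lesssim t_\mu e^{-cR^2}$, and later classes contribute $\lesssim\sum_{\nu>\mu}t_\nu$; both must be $\lesssim\eta_\mu$.
\begin{itemize}
\item \emph{Fixed $R$.} With $t_\mu$ polynomially small in $d$, the first requirement reads $e^{-cR^2}\lesssim t_\mu^{\gamma-1}\to0$, which fails for large $d$.
\item \emph{$R\sim\sqrt{\log d}$.} You then have $M\sim\log d$ classes, and the second requirement forces $t_{\mu+1}\lesssim t_\mu^{\gamma}$. Hence $t_M\approx t_1^{\gamma^M}=t_1^{d^{c}}$, which is not bounded below by any fixed power of $d$.
\end{itemize}
The class-by-class iteration closes only in the Donaldson--Auroux regime. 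That regime needs $d$-independent amplitudes, fixed $R$, boundedly many classes, and the sharp estimate $\eta=\delta(\log\delta^{-1})^{-p}$. A polynomial loss is fatal even with $d$-independent amplitudes, because $e^{-cR^2}$ would have to beat $t_M^{\gamma-1}$, which is doubly exponentially small in $R^2$. That route would give a $d$-independent constant, but it is not what you wrote, and the exponent $61$ in your proposal is a guess rather than an output.

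The missing idea is that the union bound you dismiss does work, because of a codimension count: $(W,\nabla W)\approx0$ is a complex-codimension-two condition on a real surface. This is exactly the paper's argument.
\begin{itemize}
\item Perturb only one section, $r\mapsto r+\lambda U$ with $\lambda=d^{-K}$ and $U$ a normalized Gaussian in $H^0(L)$, on a region where the other section satisfies $|q|_h\geq c_0$. Then $\mathcal T_U=(d^{-1/2}W_U,d^{-1}\nabla W_U)$ is affine in $U$.
\item Jet peak sections give a right inverse of $\mathcal A_x$ of norm $\lesssim d$. Hence $\operatorname{Cov}(\lambda\mathcal A_xU)\geq c\lambda^2d^{-3}$ and $\mathbb P(|\mathcal T_U(x)|\leq2\delta)\leq C\delta^4\lambda^{-4}d^6$.
\item Since $|\mathcal T_U|$ is $C\sqrt d$-Lipschitz, a net of $\lesssim d\delta^{-2}$ points suffices. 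With $\delta=d^{-M}$ the union bound is $\lesssim d^{7-2M+4K}$.
\item Do this with $(K,M)=(3,11)$ on $\{|s_0|_h\geq b_*/2\}$, then with $(13,31)$ on the complement with the roles of $s_0,s_1$ swapped. The second perturbation is too small to spoil the first, and the result is $d^{-1}|W|^2+d^{-2}|\nabla W|^2\geq cd^{-62}$, hence (iv).
\end{itemize}
No classes, interaction bookkeeping, or one-variable Sard lemma is needed. Your own local setup also closes this way. Condition on $(t_q)_{q\neq p}$: failure on the ball at $p$ is then a one-variable event, so by Fubini your claimed measure bound gives failure probability $\lesssim\eta^c t^{-2}$ for each of the $\lesssim d$ balls. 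Choosing $\eta$ polynomially small makes the union bound $o(1)$, again with no decomposition into classes.
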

\begin{proof}
    Start with a pair $s_0,s_1\in H^0(L)$ given by Lemma \ref{Lemma: quantitative pencil lemma}. Let $a,A,\rho>0$ be such that (i), (ii), and (iii) in \ref{Lemma: quantitative pencil lemma} hold for the couple $s_0,s_1$. 

    \medskip
    \noindent
    \textbf{Step 1: a one-sided probabilistic perturbation}.  We prove the following fact. Suppose that $r,q\in H^0(L)$ satisfy
    \begin{align}\label{Equation: initial derivatives bound}
        |\nabla^jr|_{g_0,h}+|\nabla^jq|_{g_0,h}
        \leq C_1d^{\frac j2}
        \qquad\forall\,j=0,1,2,3.
    \end{align}
    and %
    \begin{align}\label{Equation: q lower}
        |q|_h\ge c_0>0 \qquad\mbox{ on an arbitrary subset }\,\Omega\subset\Sigma.
    \end{align}
    Fix $K\ge3$ and set $\lambda=d^{-K}$. If $M>2K+4$, then there exists $u\in H^0(L)$ with $\|u\|_{L^2(\Sigma)}\le 2$ such that, for
    \begin{align*}
        W_u:=q\nabla(r+\lambda u)-(r+\lambda u)\nabla q,
    \end{align*}
    one has
    \begin{align}\label{Equation: local random transversality}
        d^{-1}|W_u|_{g_0,h}^2+d^{-2}|\nabla W_u|_{g_0,h}^2\ge d^{-2M} \qquad\mbox{ on }\,\Omega.
    \end{align}
    To prove the assertion, set $N_d:=\dim_\mathbb C H^0(L)$. By the Riemann--Roch theorem, there exists $C_2>0$ depending only on the topology of $\Sigma$ such that
    \begin{align}\label{Equation: dimension H0}
        N_d\le C_2d.
    \end{align}
    Choose an $L^2$-orthonormal basis $\{e_\alpha\}_{\alpha=1,\dots,N_d}$ of $H^0(L)$ and let
    \begin{align*}
        U:=\frac{1}{\sqrt{N_d}}\sum_{\alpha=1}^{N_d}g_\alpha e_\alpha,
    \end{align*}
    where the $g_\alpha$ are independent standard complex Gaussian variables. Then
    \begin{align*}
        \mathbb E\Big[\|U\|_{L^2(\Sigma)}^2\Big]=1
    \end{align*}
    and consequently, by Markov's inequality,
    \begin{align}\label{Equation: Gaussian L2 event}
        \mathbb P\big(\|U\|_{L^2(\Sigma)}>2\big)=\mathbb P\big(\|U\|_{L^2(\Sigma)}^2>4\big)\le\frac{\mathbb E\Big[\|U\|_{L^2(\Sigma)}^2\Big]}{4}=\frac{1}{4}.
    \end{align}
    For $x\in\Omega$, define the $\c$-linear map $\mathcal A_x:H^0(L)\rightarrow\c^2$ given by
    \begin{align*}
        \mathcal A_xu:=\left(d^{-\frac{1}{2}}\big(q\nabla u-u\nabla q\big)(x),d^{-1}\nabla\big(q\nabla u-u\nabla q\big)(x)\right),
    \end{align*}
    after making unitary identifications of the two target fibres with
    $\mathbb C$. We claim that $\mathcal A_x$ admits a right inverse $R_x$ satisfying
    \begin{align}\label{Equation: right inverse jet map}
        \|R_x\|_{\c^2\to L^2}\le C_3d.
    \end{align}
    Let $(z_0,z_1)\in\c^2$. Apply the sharp constant-curvature jet construction
    \eqref{Equation: sharp constant curvature jet peaks}, to prescribe
    \begin{align*}
        u(x)=0,\qquad
        \nabla^{1,0}u(x)=\sqrt d\,q(x)^{-1}z_0,\qquad
        (\nabla^2u)^{2,0}(x)=d\,q(x)^{-1}z_1.
    \end{align*}
    As throughout this argument, the target fibres are identified unitarily
    with $\c$. The natural rescaled norm of this jet is at most
    $C(|z_0|+|z_1|)$, because $|q(x)|\geq c_0$.
    The construction is complex linear in the prescribed jet and gives
    \begin{align}\label{Equation: two jet peak bound}
        |\nabla^j u(y)|_{g_0,h}
        \leq C_j d^{\frac{j}{2}}(|z_0|+|z_1|)
                  e^{-c d\dist_{g_0}^2(x,y)}.
    \end{align}
    At $x$, the terms containing $u(x)$ vanish and the mixed first-derivative
    terms cancel in the holomorphic second derivative, since the source has
    complex dimension one. Hence $\mathcal A_xu=(z_0,z_1)$.
    Furthermore,
    \begin{align*}
        \|u\|_{L^2(\Sigma)}
        \leq C d^{-\frac12}(|z_0|+|z_1|)
        \leq C' d\,\|(z_0,z_1)\|_{\c^2}.
    \end{align*}
    This proves \eqref{Equation: right inverse jet map}.
    It follows that
    \begin{align*}
        \mathcal A_x\mathcal A_x^*\ge c_1d^{-2}\Id_{\c^2}.
    \end{align*}
    for some $c_1:=C_3^{-2}>0$. Therefore, the complex random vector $\lambda\mathcal A_xU$ has covariance bounded below by
    \begin{align*}
        \operatorname{Cov}(\lambda\mathcal A_xU)=\frac{\lambda^2}{N_d}\mathcal A_x\mathcal A_x^*\ge c_2\lambda^2d^{-3}\Id_{\c^2},
    \end{align*}
    with $c_2:=\frac{c_1}{C_2}>0$. The density of the random vector $\lambda\mathcal A_xU$ is consequently bounded above by $C_6\lambda^{-4}d^6$ for $C_6:=\frac{1}{\pi^2c_2^2}>0$. Thus, for every deterministic centre $v\in\c^2$ and every $\delta>0$,
    \begin{align}\label{Equation: fixed point small ball}
        \mathbb P\big(|v+\lambda\mathcal A_xU|\le 2\delta\big)\le C_7\delta^4\lambda^{-4}d^6.
    \end{align}
    where $C_7>0$ is a uniform constant. Set $\delta:=d^{-M}$. Recall that, by standard $L^2$--$C^j$ Cauchy estimates, there are constants $K_j$, independent of $d$, such that 
    \begin{align}\label{Equation: natural scale Cauchy}
        \|\nabla^j\tau\|_{L^\infty(\Sigma)}\le K_jd^{\frac{j+1}{2}}\|\tau\|_{L^2(\Sigma)} \qquad\forall\,\tau\in H^0(L),\quad 0\le j\le 3.
    \end{align}
    On the event $\|U\|_{L^2(\Sigma)}\le 2$, the estimates \eqref{Equation: natural scale Cauchy} and \eqref{Equation: initial derivatives bound} give
    \begin{align*}
        |\nabla W_U|_{g_0,h}\le C_8d,\qquad |\nabla^2W_U|_{g_0,h} \le C_8d^{\frac{3}{2}}.
    \end{align*}
    for $C_8>0$. Consequently, setting
    \begin{align*}
        \mathcal T_U:=\bigl(d^{-\frac12}W_U,d^{-1}\nabla W_U\bigr),
    \end{align*}
    the function $|\mathcal T_U|_{g_0,h}$ is
    $C_8\sqrt d$-Lipschitz. Let $r_d:=\frac{\delta}{4C_8\sqrt d}$ and choose a maximal $r_d$-separated subset $\Lambda\subset\Omega$. By construction, $\Omega$ is covered by balls of radius $r_d$ centred at points in $\Lambda$ and 
    \begin{align}\label{Equation: fine random net size}
        \#\Lambda\le C_9r_d^{-2}\le C_{10}d\,\delta^{-2},
    \end{align}
    with $C_{10}:=16C_8^2C_9>0$. At each point $p\in\Lambda$, the value $\mathcal T_U(p)$ is a deterministic vector plus $\lambda\mathcal A_pU$. Hence \eqref{Equation: fixed point small ball} and \eqref{Equation: fine random net size} give
    \begin{align*}
        \mathbb P\Big(|\mathcal T_U(p)|_{g_0,h}\le2\delta\text{ for some }p\in\Lambda\Big)&\le C_{11}d\delta^{-2}\delta^4\lambda^{-4}d^6=C_{11}d^{7-2M+4K}.
    \end{align*}
    By our choice of $M$, this tends to zero. For sufficiently large
    $d$, it is smaller than $\frac{1}{4}$. Combining this with \eqref{Equation: Gaussian L2 event}, there exists a realization $u$ such that $\|u\|_{L^2(\Sigma)}\le 2$
    and
    \begin{align*}
        |\mathcal T_u(p)|_{g_0,h}>2\delta \qquad\forall\,p\in\Lambda.
    \end{align*}
    The Lipschitz bound on $\mathcal T_u$ and the definition of $r_d$ then imply
    \begin{align*}
        |\mathcal T_u(x)|_{g_0,h}\ge\delta \qquad\forall\, x\in\Omega,
    \end{align*}
    which is precisely \eqref{Equation: local random transversality}.

    \medskip
    \noindent
    \textbf{Step 2: the first perturbation}.
    Let $b_*:=\sqrt{a}>0$ and define
    \begin{align*}
        \Omega_0:=\left\{x\in\Sigma \mbox{ : } |s_0(x)|_h\ge\frac{b_*}{2}\right\}.
    \end{align*}
    Apply Step 1 with
    \begin{align*}
        \Omega=\Omega_0,\qquad
        q=s_0,\qquad
        r=s_1,\qquad
        K=3,\qquad
        M=11.
    \end{align*}
    There exists $u_1\in H^0(L)$ with $\|u_1\|_{L^2(\Sigma)}\le 2$ such that, setting
    \begin{align*}
        s_0^{(1)}:=s_0,\qquad
        s_1^{(1)}:=s_1+d^{-3}u_1,
    \end{align*}
    and
    \begin{align*}
        W^{(1)}:=s_0^{(1)}\nabla s_1^{(1)}-s_1^{(1)}\nabla s_0^{(1)},
    \end{align*}
    we have
    \begin{align}\label{Equation: first stage transverse}
        d^{-1}|W^{(1)}|_{g_0,h}^2+d^{-2}|\nabla W^{(1)}|_{g_0,h}^2\ge d^{-22} \qquad\mbox{ on }\,\Omega_0.
    \end{align}
    Now let
    \begin{align*}
        \Omega_1:=\Sigma\smallsetminus\Omega_0.
    \end{align*}
    Note that, by the properties of the pair $(s_0,s_1)$, we must have    
    \begin{align*}
        |s_1(x)|_h\ge\frac{\sqrt3}{2}b_* \qquad\forall\,x\in\Omega_1.
    \end{align*}
    By \eqref{Equation: natural scale Cauchy},
    \begin{align*}
        \|d^{-3}u_1\|_{L^\infty(\Sigma)}\le C_{12}d^{-\frac{5}{2}}.
    \end{align*}
    Therefore, for all sufficiently large $d$,
    \begin{align}\label{Equation: second stage coefficient lower}
        \big|s_1^{(1)}\big|_h\ge\frac{b_{*}}{2}\qquad\mbox{ on }\,\Omega_1.
    \end{align}

    \medskip
    \noindent
    \textbf{Step 3: the second perturbation}.
    Apply Step 1 with
    \begin{align*}
        \Omega=\Omega_1,\qquad
        q=s_1^{(1)},\qquad
        r=s_0^{(1)},\qquad
        K=13,\qquad
        M=31.
    \end{align*}
    There exists $u_0\in H^0(L)$ with $\|u_0\|_{L^2(\Sigma)}\le 2$ such that, setting
    \begin{align*}
        \tilde s_0:=s_0^{(1)}+d^{-13}u_0,\qquad \tilde s_1:=s_1^{(1)},
    \end{align*}
    the final Wronskian
    \begin{align*}
        W:=\tilde s_0\nabla \tilde s_1-\tilde s_1\nabla \tilde s_0
    \end{align*}
    satisfies
    \begin{align}\label{Equation: second stage transverse}
        d^{-1}|W|_{g_0,h}^2+d^{-2}|\nabla W|_{g_0,h}^2\ge d^{-62} \qquad\mbox{ on }\,\Omega_1.
    \end{align}
    The second perturbation changes the normalized jet by at most
    \begin{align}\label{Equation: second perturbation normalized change}
        \left|\big(d^{-\frac{1}{2}}(W-W^{(1)}),d^{-1}(\nabla W-\nabla W^{(1)})\big)\right|_{g_0,h}\le C_{12}d^{-13}\sqrt d=C_{13}d^{-\frac{25}{2}}.
    \end{align}
    Since $d^{-\frac{25}{2}}=o(d^{-11})$, the lower bound \eqref{Equation: first stage transverse} remains valid, with one half of its original constant, on $\Omega_0$. Since $d^{-11}\gg d^{-31}$, combining \eqref{Equation: first stage transverse} and \eqref{Equation: second stage transverse} gives
    \begin{align}\label{Equation: normalized global transversality}
        d^{-1}|W|_{g_0,h}^2+d^{-2}|\nabla W|_{g_0,h}^2\ge c d^{-62} \qquad\mbox{ on }\,\Sigma.
    \end{align}
    Multiplying by $d$ proves \eqref{Equation: polynomial W transversality}.

    \medskip
    \noindent
    \textbf{Step 4: preservation of the previous conclusions}.
    By \eqref{Equation: natural scale Cauchy},
    \begin{align*}
        \|d^{-3}u_1+d^{-13}u_0\|_{L^\infty(\Sigma)}\le C_{13}d^{-\frac{5}{2}}.
    \end{align*}
    From this, we conclude that $\tilde s_0,\tilde s_1$ still satisfy the bounds (i) and (ii) in Lemma \ref{Lemma: quantitative pencil lemma}. Moreover,
    \begin{align*}
        d^{-\frac{1}{2}}\|W-W_{s}\|_{L^\infty(\Sigma)}\le C_{14}(d^{-3}+d^{-13})\sqrt{d}=o(1),
    \end{align*}
    where
    \begin{align*}
        W_s:=s_0\nabla s_1-s_1\nabla s_0.
    \end{align*}
    Consequently, the lower bound for $W_s$ on small balls given by Lemma \ref{Lemma: quantitative pencil lemma}(iii) remains valid, after decreasing its constant. This completes the proof.
\end{proof}
We can now pass from the quantitative pencil to the desired holomorphic
cover. The uniform lower bound for $\lvert s_0\rvert_h^2+\lvert s_1\rvert_h^2$ ensures that $f=[s_0:s_1]$ is globally defined, while the two Wronskian estimates obtained above yield respectively the averaged energy bound and the polynomial non-degeneracy condition.
\begin{proposition}\label{Proposition: existence of balanced covers} 
There exists $d_0\in\n$ such that for every $d\ge d_0$ there exists a balanced branched holomorphic cover $f:\Sigma\to\cp^1$ of degree $d$.
\end{proposition}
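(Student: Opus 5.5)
The plan is to take a holomorphic Hermitian line bundle $(L,h)$ of degree $d$ whose curvature is the constant multiple $\frac{i}{2\pi}F_\nabla=\lambda_d\omega_0$. Such a metric exists by solving a Poisson equation on $\Sigma$, since the degree fixes the cohomology class of the curvature. Lemma~\ref{Lemma: polynomially transverse quantitative pencil lemma} then supplies sections $s_0,s_1\in H^0(L)$, and we set $f:=[s_0:s_1]$. By (i), $s_0$ and $s_1$ have no common zero, so $f$ is a well-defined holomorphic map. Its degree equals $\deg L=d$, because $f^*\mathcal O_{\cp^1}(1)\cong L$. It remains to verify the three conditions of Definition~\ref{Definition: m-balanced cover}. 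All of them will follow from one pointwise identity, written in a local unitary frame of $L$ and using that the Fubini--Study metric on $\cp^1$ is computed from $s=(s_0,s_1)$:
\begin{align*}
    |df|_{g_0}^2=c_{\mathrm{FS}}\frac{|W|_{g_0,h}^2}{\big(|s_0|_h^2+|s_1|_h^2\big)^2}.
\end{align*}
This holds up to a universal constant, since $W$ is the numerator of the derivative of $s_1/s_0$ in homogeneous form and $\partial f$ has no antiholomorphic part.

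\textbf{Conditions (i) and (ii).} Write $|s|^2:=|s_0|_h^2+|s_1|_h^2$, which lies in $[a,A]$ by Lemma~\ref{Lemma: polynomially transverse quantitative pencil lemma}(i). The derivative bounds in (ii) give $|W|\le Cd^{1/2}$, so the identity yields $|df|^2\le Cd$. For the second derivative, I would differentiate the identity, or better, differentiate $f$ in an affine chart of $\cp^1$ on the region where $|s_0|$ or $|s_1|$ is bounded below. Near each point one of the two is $\ge\sqrt{a/2}$, so the affine coordinate $s_1/s_0$ or $s_0/s_1$ is a quotient with denominator bounded below. Each covariant derivative then costs at most $d^{1/2}$, using the bounds on $\nabla^j s_i$ for $j\le3$; the connection terms cancel in the quotient. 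This gives $|\nabla^2f|\le Cd$. For condition (ii), Lemma~\ref{Lemma: polynomially transverse quantitative pencil lemma}(iii) provides a ball $B_{\rho d^{-1/2}}(p_x)\subset B_{d^{-1/2}}(x)$ on which $|df|^2\ge c a^2 d/A^2$. Its volume is a fixed fraction $\gtrsim\rho^2$ of the volume of $B_{d^{-1/2}}(x)$, and averaging gives $\fint|df|^2\ge cd$.

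\textbf{Condition (iii).} This is the only delicate step. The aim is to convert $|W|^2+d^{-1}|\nabla W|^2\ge a d^{-61}$ into $|df|^2+d^{-1}|\nabla^2f|^2\ge cd^{-n_0}$, with $n_0=61$ or slightly larger. Work in the affine chart as above and write $f=s_1/s_0$, so that $\partial f=W/s_0^2$ after trivialising $L$ by a holomorphic frame whose norm is controlled at scale $d^{-1/2}$. Then
\begin{align*}
    \nabla\partial f=\frac{\nabla W}{s_0^2}-\frac{2W\nabla s_0}{s_0^3}.
\end{align*}
Hence $|\nabla W|\le C\big(|\nabla^2 f|+d^{1/2}|W|\big)$ and $|W|\le C|df|$. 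Substituting into the transversality inequality gives
\begin{align*}
    ad^{-61}\le C\big(|df|^2+d^{-1}|\nabla^2f|^2+|df|^2\big).
\end{align*}
We also have to account for the Fubini--Study factor, which is bounded above and below on the chart image, and for the Christoffel terms of $\cp^1$, which are bounded in the chart. The main care needed is to check that the chart-dependent lower-order terms, coming from the Chern connection and from the Fubini--Study Christoffel symbols, contribute only terms of the form $d^{1/2}|df|$. These can be absorbed, so a polynomial loss in $d$ is acceptable. This gives (iii) with $n_0=61$. Finally, all constants depend only on $(\Sigma,g_0)$, and we take $d_0$ to be the threshold from Lemma~\ref{Lemma: polynomially transverse quantitative pencil lemma}.
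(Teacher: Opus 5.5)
Your proposal is correct and follows essentially the same route as the paper. The paper also takes a constant-curvature bundle of degree $d$, the pencil from Lemma~\ref{Lemma: polynomially transverse quantitative pencil lemma}, and the Fubini--Study identity $|df|^2\asymp|W|^2/(|s_0|^2+|s_1|^2)^2$. It then gets the upper bounds by differentiating the quotient in the chart where $|s_i|\geq\sqrt{a/2}$, condition (ii) by averaging over $B_{\rho d^{-1/2}}(p_x)$, and condition (iii) with $n_0=61$ from the estimate $|\nabla W|\leq C(|\nabla^2 f|+\sqrt d\,|df|)$ combined with the polynomial transversality of $W$.
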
	
\begin{proof}
Choose a holomorphic line bundle $L_1\to\Sigma$ of degree one and set
\begin{align*}
    L:=L_1^{\otimes d}.
\end{align*}
Then $\deg(L)=d$. We equip $L$ with a Hermitian metric $h$ whose Chern
connection satisfies
\begin{align*}
    \frac{i}{2\pi}F_\nabla
    =
    \frac{d}{\vol_{g_0}(\Sigma)}\,\omega_0.
\end{align*}
Such a metric exists by the standard constant-curvature representative of the first Chern class. By Lemma \ref{Lemma: polynomially transverse quantitative pencil lemma}, for every sufficiently large $d$ there exist $a,A,\rho>0$ and holomorphic sections $s_{0},s_{1}\in H^0(L)$ such that
\begin{align}\label{Equation: 25}
    a\le |s_{0}|_{h}^2+|s_{1}|_{h}^2\le A,
\end{align}
\begin{align}\label{Equation: 23}
    |\nabla^j s_i|_{g_0,h}\le Ad^{\frac{j}{2}},\qquad i=0,1,\,j=1,2,3,
\end{align}
and $W:=s_{0}\nabla s_{1}-s_{1}\nabla s_{0}$ satisfies (iii) and (iv) in Lemma \ref{Lemma: polynomially transverse quantitative pencil lemma}. By \eqref{Equation: 25}, $s_0$ and $s_1$ have no common zeros. Therefore the map $f:\Sigma\to\cp^1$ given by
\begin{align*}
    f:=[s_0:s_1]
\end{align*}
is a well-defined holomorphic map. Moreover, $f^*\mathcal{O}_{\mathbb{CP}^1}(1)\cong L$, which implies that $\deg f=\deg L=d$. We now use the standard Fubini--Study formula. On the open set where $s_{0}\neq0$, write $w=\frac{s_{1}}{s_{0}}$. Then
\begin{align}\label{Equation: 26}
    dw=\frac{s_{0}\nabla s_{1}-s_{1}\nabla s_{0}}{s_{0}^2}=\frac{W}{s_{0}^2}.
\end{align}
Since $\lvert df\rvert_{g_0}^2$ is proportional to
\begin{align*}
    \frac{|dw|_{g_0}^2}{(1+|w|^2)^2},
\end{align*}
we obtain
\begin{align*}
    |df|_{g_0}^2\asymp\frac{|W|_{g_0,h}^2}{\left(|s_{0}|_{h}^2+|s_{1}|_{h}^2\right)^2}.
\end{align*}
The same formula holds on the open set where $s_1\neq0$,
hence globally on $\Sigma$. By \eqref{Equation: 25} and
\eqref{Equation: 23}, there is a uniform constant $C>0$ such that
\begin{align*}
    |W|_{g_0,h}\leq C\sqrt d,
    \qquad
    |df|_{g_0}^2\leq Cd.
\end{align*}
At each point, choose $i\in\{0,1\}$ with
$|s_i|_h\geq\sqrt{a/2}$ and use the corresponding target
coordinate $s_{1-i}/s_i$. This coordinate is uniformly bounded
at that point. Differentiating the quotient twice and using
\eqref{Equation: 25} and \eqref{Equation: 23} gives
$|\nabla^2f|_{g_0}\leq Cd$.

\smallskip
\noindent
We now prove the averaged lower bound. Fix $x\in\Sigma$.
Property~(iii) of
Lemma~\ref{Lemma: polynomially transverse quantitative pencil lemma}
and the Fubini--Study comparison give a uniform constant $c>0$
such that
\begin{align*}
    |df|_{g_0}^2\geq cd
    \qquad\mbox{on }B_{\rho d^{-\frac12}}(p_x).
\end{align*}
Since $g_0$ is smooth and $\Sigma$ is compact, there are
constants $c_1,C_1>0$ such that
\begin{align*}
    c_1r^2\leq\vol_{g_0}(B_r(y))\leq C_1r^2
\end{align*}
for every $y\in\Sigma$ and every sufficiently small $r>0$.
Using
$B_{\rho d^{-\frac12}}(p_x)\subset B_{d^{-\frac12}}(x)$,
we obtain
\begin{align*}
    \fint_{B_{d^{-\frac12}}(x)}
    |df|_{g_0}^2\,d\vol_{g_0}\geq cd\,\frac{\vol_{g_0}(B_{\rho d^{-\frac12}}(p_x))}
         {\vol_{g_0}(B_{d^{-\frac12}}(x))}\geq\frac{cc_1\rho^2}{C_1}d.
\end{align*}
Differentiating again in \eqref{Equation: 26}, we get
\begin{align}\label{Equation: derivative W df comparison}
    |\nabla W|_{g_0,h}\le C_2|\nabla^2f|_{g_0}+C_2\sqrt{d}|df|_{g_0}.
\end{align}
It follows that
\begin{align*}
    |W|_{g_0,h}^2+d^{-1}|\nabla W|_{g_0,h}^2\le C_3\left(|df|_{g_0}^2+d^{-1}|\nabla^2f|_{g_0}^2\right).
\end{align*}
Combining this with \eqref{Equation: polynomial W transversality} proves property (iii) in Definition \ref{Definition: m-balanced cover}. The statement follows.
\end{proof}	
%
%
\section{Regular families of branched superminimal immersions}
The purpose of this section is to show that the balanced Bor{\accent23 u}vka covers constructed from the balanced holomorphic covers constructed in Section~3 are regular points of the space of holomorphic horizontal maps. More precisely, let
\begin{align*}
    \tilde\varphi_n:\cp^1\rightarrow\mathscr Z_n
\end{align*}
be the twistor lift of the Bor{\accent23 u}vka sphere and let $f:\Sigma\to\cp^1$ be a balanced holomorphic cover of large degree $\ell$. For
\begin{align*}
    \psi:=\tilde\varphi_n\circ f,
\end{align*}
we shall prove the surjectivity of the linearized horizontality operator restricted to holomorphic sections. This is the principal regularity input in the proof of Theorem~\ref{Theorem: main statement 1}.

We begin by recalling the twistor geometry of $\s^{2n}$ and the intrinsic linearization of the horizontality equation. We then identify the relevant pullback bundles and construct a polynomially bounded holomorphic right inverse for $L_\psi$ using a homogeneous filtration and exact jet conditions at the branch points. Surjectivity follows from this construction, and a duality argument gives the projected spectral gap. Finally, the holomorphic implicit function theorem and Riemann--Roch yield the required regular families and their dimensions.
\subsection{Twistor spaces and branched superminimal immersions}
Recall that we denote by $(\,\cdot\,,\,\cdot\,)$ the complex-bilinear extension to $\c^{2n+1}$ of the Euclidean inner product on $\r^{2n+1}$, and by
\begin{align*}
    \langle v,w\rangle:=(v,\overline w)
\end{align*}
the associated Hermitian inner product. A complex subspace
$P\subset\c^{2n+1}$ is called \emph{totally isotropic} if
\begin{align*}
    (v,w)=0
    \qquad
    \forall\,v,w\in P.
\end{align*}
Let $P\subset\c^{2n+1}$ be a totally isotropic complex $n$-plane.
There is a unique real unit vector $x_P\in\s^{2n}$ orthogonal to
$P\oplus\overline P$ such that
\begin{align*}
    \bigl(x_P,\sqrt2\operatorname{Re}(e_1),
    \sqrt2\operatorname{Im}(e_1),\ldots,
    \sqrt2\operatorname{Re}(e_n),\sqrt2\operatorname{Im}e_n\bigr)
\end{align*}
is a positively oriented orthonormal basis of $\r^{2n+1}$, for any Hermitian orthonormal basis $\{e_1,\ldots,e_n\}$ of $P$. This condition is independent of that basis.
\begin{definition}[Twistor bundle]\label{Definition: twistor bundle}
    The \textit{twistor bundle} $\pi:\mathscr{Z}_n\to\s^{2n}$ of the sphere $\s^{2n}$ is given by
    \begin{align*}
        &\mathscr{Z}_n:=\big\{P\subset\c^{2n+1} \mbox{ is a totally isotropic } n\mbox{-subspace of } \c^{2n+1}\big\},\\
        &\pi(P):=x_P\in\s^{2n} \qquad\forall\,P\in\mathscr{Z}_n.
    \end{align*}
    The total space $\mathscr{Z}_n$ of the twistor bundle is called the \textit{twistor space} of $\s^{2n}$.
\end{definition}
\begin{remark}
    Recall that $\mathscr{Z}_n$ is a closed complex algebraic variety of dimension $\frac{n(n+1)}{2}$. In particular, $\mathscr{Z}_n$ is a closed complex submanifold of $\cp^{2^n-1}$ for every $n\ge 2$. Moreover, $\mathscr{Z}_2$ is biholomorphic to $\cp^3$. 
    More precisely, $\mathscr{Z}_n$ is a closed complex flag variety for every $n\ge 2$. Indeed, we have 
    \begin{align*}
        \mathscr{Z}_n=\frac{\operatorname{SO}(2n+1,\c)}{\operatorname{Stab}(V_0)} \qquad\forall\,n\ge 2,
    \end{align*}
    where $V_0\subset\c^{2n+1}$ is any reference maximal isotropic subspace of $\c^{2n+1}$. 
\end{remark}
\begin{definition}\label{Definition: universal subbundle}
    The holomorphic rank $n$ vector subbundle $\mathscr{S}_n\to\mathscr{Z}_n$ of the trivial bundle $\mathscr{Z}_n\times\c^{2n+1}$ over $\mathscr{Z}_n$ given by
    \begin{align*}
        \mathscr{S}_n:=\big\{(P,v) \mbox{ : } P\in\mathscr{Z}_n,\,v\in P\big\}
    \end{align*}
    is called the \textit{universal subbundle} over $\mathscr{Z}_n$. We let $\mathscr{S}_n^{\vee}\to\mathscr{Z}_n$ be the dual bundle of $\mathscr{S}_n$, i.e.
    \begin{align*}
        \mathscr{S}_n^{\vee}:=\big\{(P,\phi) \mbox{ : } P\in\mathscr{Z}_n,\,\phi\in P^*:=\operatorname{Hom}_{\c}(P,\c)\big\}
    \end{align*}
   Lastly, let $\mathscr S_n^\perp\subset\mathscr Z_n\times\mathbb C^{2n+1}$ be the holomorphic subbundle of the trivial bundle $\mathscr{Z}_n\times\c^{2n+1}$ over $\mathscr{Z}_n$ given by
    \begin{align*}
        \mathscr{S}_n^{\perp}:=\big\{(P,v) \mbox{ : } P\in\mathscr{Z}_n,\,v\in P^{\perp}\big\},
    \end{align*}
    where $P^{\perp}$ denotes the complex bilinear orthogonal complement of $P$. We define the holomorphic quotient line bundle
    \begin{align*}
        \mathscr L_n:=\mathscr S_n^\perp/\mathscr S_n.
    \end{align*}
    For every $P\in\mathscr Z_n$, the class of $x_P$ in $P^\perp/P$ is non-zero and determines a smooth complex-linear identification
    \begin{align*}
        \operatorname{span}_{\mathbb C}\{x_P\}\cong(\mathscr L_n)_P.
    \end{align*}
\end{definition}
The universal subbundle gives a convenient description of the holomorphic tangent bundle of $\mathscr Z_n$. The resulting exact sequence separates two geometrically different types of infinitesimal motion: the vertical directions deform the maximal isotropic plane while keeping its projection to $\s^{2n}$ fixed, whereas the horizontal directions describe motion of the projected point in the sphere.
\begin{lemma}\label{Lemma: short exact sequence}
    There exists a short exact sequence of holomorphic vector bundles
    \begin{align*}
        0\rightarrow\mathscr{S}_n^{\vee}\otimes\mathscr{L}_n\rightarrow T^{1,0}\mathscr{Z}_n\overset{q}{\rightarrow}\wedge^2\mathscr{S}_n^{\vee}\rightarrow 0.
    \end{align*}
\end{lemma}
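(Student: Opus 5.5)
We identify $T^{1,0}\mathscr Z_n$ with a quotient of $\mathfrak{so}(2n+1,\c)$ using the homogeneous description $\mathscr Z_n=\operatorname{SO}(2n+1,\c)/\operatorname{Stab}(V_0)$. Equivalently, we regard $\mathscr Z_n$ as a closed subvariety of the Grassmannian $\operatorname{Gr}_n(\c^{2n+1})$ and use the standard identification $T_P\operatorname{Gr}_n(\c^{2n+1})\cong\operatorname{Hom}(P,\c^{2n+1}/P)$. A holomorphic curve $P(t)$ in $\mathscr Z_n$ with $P(0)=P$ determines $\dot P\in\operatorname{Hom}(P,\c^{2n+1}/P)$. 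Differentiating the isotropy condition $(v(t),w(t))=0$ for holomorphic families $v(t),w(t)\in P(t)$ gives $(\dot v,w)+(v,\dot w)=0$. We will use this relation in two ways.

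\emph{Step 1: the tangent space is a space of skew maps.} Using the nondegenerate bilinear form, we have $\c^{2n+1}/P^\perp\cong P^*$. Composing $\dot P$ with the projection $\c^{2n+1}/P\to\c^{2n+1}/P^\perp\cong P^*$ yields a bilinear form $B(v,w):=(\dot v,w)$ on $P$, and the differentiated isotropy says exactly that $B$ is skew. This defines a map
\[
q:T_P\mathscr Z_n\to\wedge^2P^*.
\]
Its kernel consists of those $\dot P$ taking values in $P^\perp/P=(\mathscr L_n)_P$, that is, $\operatorname{Hom}(P,\mathscr L_{n,P})=P^*\otimes\mathscr L_{n,P}$.

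We check that every such $\phi\in\operatorname{Hom}(P,P^\perp/P)$ is tangent. For $X$ in the kernel, lift $\phi$ to $A\in\mathfrak{so}(2n+1,\c)$ by choosing $A(v)=\tilde\phi(v)\in P^\perp$ on $P$. The bilinear adjoint then fixes $A$ on $\c^{2n+1}/P^\perp$, and on the remaining complement we set it so that $A$ is skew. This works because $(\tilde\phi(v),w)=0$ for $v,w\in P$. The curve $e^{tA}P$ is then a curve in $\mathscr Z_n$ with derivative $\phi$.

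\emph{Step 2: surjectivity of $q$.} Given a skew form $B$ on $P$, we construct $A\in\mathfrak{so}(2n+1,\c)$ mapping $v\in P$ into an isotropic complement $\overline P$. Here $\c^{2n+1}=P\oplus\c x_P\oplus\overline P$, and $\overline P\cong P^*$ via the bilinear pairing. We prescribe $(Av,w)=B(v,w)$ and extend $A$ by skewness, which is possible precisely because $B$ is skew. Then $e^{tA}P$ is a curve in $\mathscr Z_n$ with $q(\dot P)=B$.

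A dimension count confirms exactness. We have
\[
n+\tfrac{n(n-1)}{2}=\tfrac{n(n+1)}{2}=\dim_\c\mathscr Z_n.
\]
This count also shows that $\mathscr Z_n$ is smooth of the expected dimension, so the tangent space is exactly the space of skew-compatible elements described above.

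\emph{Step 3: holomorphicity.} All constructions are natural and $\operatorname{SO}(2n+1,\c)$-equivariant. The maps are induced from the holomorphic bundle maps $T\operatorname{Gr}=\operatorname{Hom}(\mathscr S,\underline{\c}^{2n+1}/\mathscr S)\to\operatorname{Hom}(\mathscr S,\underline{\c}^{2n+1}/\mathscr S^\perp)\cong\mathscr S^\vee\otimes\mathscr S^\vee$ restricted to $\mathscr Z_n$. Hence $q$ and the inclusion are holomorphic bundle maps of constant rank, and the sequence is a sequence of holomorphic vector bundles.

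The main obstacle is showing that the image of $T\mathscr Z_n$ lands in the skew part and that the kernel is the whole of $\operatorname{Hom}(\mathscr S,\mathscr L_n)$. Both are handled by the isotropy differentiation together with the explicit Lie algebra lifts.
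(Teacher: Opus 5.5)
Your proposal is correct and follows essentially the paper's route: the same map $q(\dot P)(v,w)=(\dot v,w)$, skew by the differentiated isotropy, the same kernel $\operatorname{Hom}(P,P^\perp/P)\cong P^\vee\otimes(\mathscr L_n)_P$, and holomorphicity from the fibrewise construction. The one difference is how you get surjectivity of $q$ and the fact that all of $\operatorname{Hom}(P,P^\perp/P)$ is tangent. You use explicit skew lifts in $\mathfrak{so}(2n+1,\c)$, whereas the paper asserts that $T_P^{1,0}\mathscr Z_n$ equals the space of skew-compatible homomorphisms and deduces surjectivity from $\dim_{\c}T_P^{1,0}\mathscr Z_n=n(n+1)/2$; your lifts actually supply the justification for that identification.
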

\begin{proof}
    Fix $P\in\mathscr Z_n$. The tangent space to the Grassmannian at
    $P$ is naturally identified with
    \begin{align*}
        \operatorname{Hom}_{\mathbb C}
        \big(
            P,
            \mathbb C^{2n+1}/P
        \big).
    \end{align*}
    Under this identification, $T_P^{1,0}\mathscr Z_n$ consists of
    those homomorphisms $A$ for which
    \begin{align*}
        (\tilde A v,w)+(v,\tilde A w)=0 \qquad\forall\,v,w\in P,
    \end{align*}
    where $\tilde A:P\to\mathbb C^{2n+1}$ is any lift of $A$. The preceding condition is independent of the chosen lift because
    $P$ is totally isotropic.
    Define
    \begin{align*}
        q_P:
        T_P^{1,0}\mathscr Z_n
        \rightarrow
        \wedge^2P^\vee,
        \qquad
        q_P(A)(v,w)
        :=
        (\tilde A v,w).
    \end{align*}
    The defining relation for $T_P^{1,0}\mathscr Z_n$ shows that
    $q_P(A)$ is skew-symmetric. Moreover,
    \begin{align*}
        q_P(A)=0
    \end{align*}
    if and only if the image of $A$ is contained in
    $P^\perp/P$. Therefore,
    \begin{align*}
        \ker q_P
        \cong
        \operatorname{Hom}_{\mathbb C}
        \big(
            P,
            P^\perp/P
        \big)
        \cong
        P^\vee\otimes(\mathscr L_n)_P.
    \end{align*}
    Since
    \begin{align*}
        \dim_{\mathbb C}T_P^{1,0}\mathscr Z_n
        =
        \frac{n(n+1)}2,
    \end{align*}
    the map $q_P$ is surjective. These fibrewise maps depend
    holomorphically on $P$, and hence give the asserted holomorphic
    exact sequence.
\end{proof}
\begin{remark}[Horizontal and vertical distributions on the twistor bundle]
    We define the holomorphic horizontal subbundle by
    \begin{align*}
        H\mathscr Z_n
        :=
        \ker q
        \cong
        \mathscr S_n^\vee\otimes\mathscr L_n,
    \end{align*}
    and the holomorphic vertical bundle by
    \begin{align*}
        V\mathscr Z_n
        :=
        \wedge^2\mathscr S_n^\vee.
    \end{align*}
    Thus,
    \begin{align*}
        0
        \rightarrow
        H\mathscr Z_n
        \rightarrow
        T^{1,0}\mathscr Z_n
        \overset{q}{\rightarrow}
        V\mathscr Z_n
        \rightarrow
        0
    \end{align*}
    is a short exact sequence of holomorphic vector bundles.
    On the other hand, the differential of the twistor projection
    determines the smooth complex subbundle
    \begin{align*}
        V_{\mathrm{geom}}\mathscr Z_n:=\ker(d\pi\otimes_{\r}\c)\cap T^{1,0}\mathscr Z_n.
    \end{align*}
    The restriction
    \begin{align*}
        q|_{V_{\mathrm{geom}}\mathscr Z_n}:
        V_{\mathrm{geom}}\mathscr Z_n
        \rightarrow
        V\mathscr Z_n
    \end{align*}
    is a smooth complex-linear isomorphism. We use it to identify the
    two bundles smoothly and to transport the holomorphic structure of
    $V\mathscr Z_n$ to the geometric vertical bundle.
    With this convention, the bundle map
    \begin{align*}
        \Pi_{\operatorname V}:
        T^{1,0}\mathscr Z_n
        \rightarrow
        V\mathscr Z_n
    \end{align*}
    is the holomorphic quotient map $q$; under the smooth orthogonal
    decomposition
    \begin{align*}
        T^{1,0}\mathscr Z_n
        =
        H\mathscr Z_n
        \oplus
        V_{\mathrm{geom}}\mathscr Z_n,
    \end{align*}
    it agrees with the ordinary vertical projection. The displayed orthogonal decomposition is a smooth splitting and is not asserted to be holomorphic. Note that, when $n=2$, the exact sequence above is the standard holomorphic contact sequence on $\mathscr Z_2\cong\mathbb{CP}^3$.
\end{remark}
\begin{definition}\label{Definition: horizontality}
    A smooth map $\psi:\Sigma\to\mathscr Z_n$ is called
    \textit{horizontal} if
    \begin{align*}
        \tfrac12\bigl(d\psi(x)[v]-iJ_n d\psi(x)[v]\bigr)
        \in H_{\psi(x)}\mathscr Z_n
        \qquad\forall\,x\in\Sigma,\quad v\in T_x\Sigma,
    \end{align*}
    where $J_n$ denotes the complex structure of $\mathscr Z_n$.
\end{definition}
Equivalently, for a smooth map both type components must be horizontal:
\begin{align*}
    \Pi_{\operatorname V}(\partial\psi)=0,
    \qquad
    \Pi_{\operatorname V}(\bar\partial\psi)=0.
\end{align*}
The quotient map acts on the target factor in each expression.
For a holomorphic map the second equality is automatic, so its
horizontality is equivalent to
$\Pi_{\operatorname V}(\partial\psi)=0$.
This is the formulation used after imposing holomorphicity in the
deformation problem below.
\begin{definition}
    Let $\psi:\Sigma\to\mathscr{Z}_n$ be a smooth map. We say that $\psi$ is \textit{linearly full} if its image is not contained in any submanifold of the form
    \begin{align*}
        \mathscr{Z}_n^F:=\big\{W\oplus F\in\mathscr{Z}_n \mbox{ : } W\in\mathscr{Z}_r\big((F\oplus\overline{F})^{\perp}\big)\big\}\cong\mathscr{Z}_r,
    \end{align*}
    for some $(n-r)$-dimensional subspace $F$ of $\c^{2n+1}$ with $1\le r<n$. 
\end{definition}
Geometrically, $\mathscr Z_n^F$ is the twistor space associated with a proper even-dimensional totally geodesic subsphere of $\s^{2n}$. Thus, for a horizontal holomorphic curve $\psi$, linear fullness is equivalent to the condition that its projection $\pi\circ\psi$ is not contained in any proper totally geodesic subsphere. This explains the compatibility between the notion introduced above and the notion of linear fullness for superminimal immersions used in the introduction.
\begin{proposition}\label{Proposition: correspondence holo+hor with supermin}
    If $\psi:\Sigma\to\mathscr{Z}_n$ is a nonconstant horizontal holomorphic curve, then $\pi\circ\psi:\Sigma\to\s^{2n}$ is a branched superminimal immersion. Moreover, for every linearly full branched superminimal immersion $\varphi:\Sigma\to\s^{2n}$ there exists a unique linearly full horizontal holomorphic curve $\tilde\varphi:\Sigma\to\mathscr{Z}_n$ such that $\pi\circ\tilde\varphi$ is either $\varphi$ or $-\varphi$.
\end{proposition}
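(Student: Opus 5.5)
This is the classical Eells--Salamon/Bryant twistor correspondence. I would prove the two assertions separately, working locally on $\Sigma$ with a holomorphic coordinate $z$ and then globalising by uniqueness.

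\emph{First assertion.} Let $\psi$ be a nonconstant horizontal holomorphic curve. Locally, choose a holomorphic frame $e_1,\dots,e_n$ of $\psi^*\mathscr S_n$; this is possible since $\psi^*\mathscr S_n$ is a holomorphic subbundle of the trivial bundle. By Lemma~\ref{Lemma: short exact sequence}, horizontality $\Pi_{\operatorname V}(\partial\psi)=0$ says $(\partial_z e_i,e_j)=0$ for all $i,j$, i.e.\ $\partial_z e_i\in\psi^\perp$.

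Set $\varphi:=\pi\circ\psi=x_P$. I would show $\partial_{\bar z}\varphi\in\overline P$ (up to the sign convention, possibly $P$) as follows:
\begin{itemize}
\item Since $(x_P,e_i)=0$, we have $(\partial_{\bar z}x_P,e_i)=-(x_P,\partial_{\bar z}e_i)=0$, because $e_i$ is holomorphic.
\item Conjugating, $(\partial_z x_P,\bar e_i)=0$.
\item Horizontality gives $(x_P,\partial_z e_i)\,x_P\equiv\partial_z e_i$ modulo $P$, so $(\partial_z x_P,e_i)=-(x_P,\partial_z e_i)$ is the only possibly nonzero pairing.
\end{itemize}
Hence $\partial_z\varphi\in P\oplus\overline P$ has component only in $\overline P$, i.e.\ $\partial_z\varphi\in\overline P$. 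In particular $(\partial_z\varphi,\partial_z\varphi)=0$, which is weak conformality.

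For harmonicity, I would differentiate once more: $\partial_{\bar z}\partial_z\varphi$ pairs with $e_i$ and $\bar e_i$ only through terms involving $\partial e_i\in P^\perp$, and one finds $\partial_{\bar z}\partial_z\varphi\parallel\varphi$. That is exactly the harmonic map equation into $\s^{2n}$.

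For superminimality, I would argue inductively that $\partial^k_{\bar z}\varphi$ lies in $P+\operatorname{span}\{\partial_z^{j}e_i\}$ modulo $\varphi$, with the isotropic flag built from $P$. A cleaner route is the standard one: the osculating spaces $\mathbf O^k_\varphi$ are contained in $P$ (or $\overline P$), which is totally isotropic. Nonconstancy of $\psi$ together with horizontality forces $\varphi$ to be nonconstant, since a horizontal curve in a fibre is constant. Branch points are then isolated zeros of the holomorphic section $\partial_z\varphi$ of $\psi^*(\mathscr S_n^\vee\otimes\mathscr L_n)$.

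\emph{Second assertion.} Given a linearly full superminimal $\varphi$, on $\Sigma\smallsetminus\mathcal B_\varphi$ consider the osculating flag $\mathbf O^k_\varphi$. It is totally isotropic and increases by one dimension per step until it stabilises. Linear fullness together with $\varphi\perp\mathbf O^k$ and $\overline{\mathbf O^k}$ forces it to stabilise exactly at dimension $n$. I would then set $P:=\mathbf O^n_\varphi$ or its conjugate.

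The osculating bundles are holomorphic, as in Chern's construction: they are spanned by the $\bar z$-derivatives of the holomorphic curve $[\partial_{\bar z}\varphi]$, after removing zeros. Hence they extend holomorphically across $\mathcal B_\varphi$ by the usual removable-singularity/saturation argument for subbundles along isolated points. We have $x_P=\pm\varphi$, the sign given by orientation, and horizontality follows from $\partial\mathbf O^{k}\subset\mathbf O^{k+1}$ and isotropy. Linear fullness of the lift follows from that of $\varphi$.

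For uniqueness, any horizontal holomorphic lift must contain the osculating spaces by the first part, so it equals $\mathbf O^n$ by dimension count.

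The main obstacle I expect is the dimension-stabilisation claim, namely that $\dim\mathbf O^k$ reaches exactly $n$ with $\mathbf O^n\oplus\overline{\mathbf O^n}=\varphi^\perp$. I would first try Calabi's argument: if $\mathbf O^{k+1}=\mathbf O^k$ with $k<n$, then $\mathbf O^k\oplus\overline{\mathbf O^k}\oplus\c\varphi$ is a constant subspace, contradicting fullness.
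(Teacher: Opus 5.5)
The paper gives no argument of its own here; it cites Fern\'andez. Your outline therefore has to stand on its own as a reconstruction of the classical Eells--Salamon/Bryant argument.

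\textbf{First part.} This is sound. The identity $(\partial_{\bar z}x_P,e_i)=-(x_P,\partial_{\bar z}e_i)=0$ alone gives $\partial_{\bar z}\varphi\in P$ for \emph{every} holomorphic $\psi$, so your third bullet is unnecessary. From this you get weak conformality and $\mathbf O^k_\varphi\subset P$. Horizontality enters only through $(\partial_z\partial_{\bar z}\varphi,e_i)=-(\partial_{\bar z}\varphi,\partial_z e_i)=0$, i.e.\ in harmonicity. Your uniqueness argument is also fine.

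\textbf{The gap.} It sits in the existence part, at the step ``horizontality follows from $\partial\mathbf O^k\subset\mathbf O^{k+1}$ and isotropy''.

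If $\partial=\partial_{\bar z}$, both properties hold for $\pi\circ\psi$ whenever $\psi$ is merely holomorphic, as just noted. So they cannot be what makes $\mathbf O^n_\varphi$ horizontal. By your first part, a horizontal holomorphic lift forces its projection to be harmonic. Hence the harmonic map equation must be used, and your second part never uses it.

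If $\partial=\partial_z$, the inclusion is false: at immersed points $\partial_z\partial_{\bar z}\varphi$ is a nonzero multiple of $\varphi$, and $\varphi\notin\mathbf O^2_\varphi$.

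The missing step is
\[
\partial_z\partial_{\bar z}\varphi\in\mathbb{C}\varphi
\quad\Longrightarrow\quad
\partial_z\partial^k_{\bar z}\varphi=\partial^{k-1}_{\bar z}\bigl(\partial_z\partial_{\bar z}\varphi\bigr)\in\mathbf O^{k-1}_\varphi\oplus\mathbb{C}\varphi .
\]
Consequently $\partial_z$ maps sections of $P=\mathbf O^n_\varphi$ into $P\oplus\mathbb{C}\varphi=P^\perp$, which is exactly horizontality.

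The same inclusion is also what your Calabi argument needs, and as written it omits it there too. Together with $\partial_z\varphi\in\overline{\mathbf O^1_\varphi}$, and $\partial_z\overline{\mathbf O^k_\varphi}\subset\overline{\mathbf O^{k+1}_\varphi}=\overline{\mathbf O^k_\varphi}$ after stabilisation, it makes $\mathbf O^k_\varphi\oplus\overline{\mathbf O^k_\varphi}\oplus\mathbb{C}\varphi$ closed under $\partial_z$ and $\partial_{\bar z}$, hence constant.

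\textbf{Smaller corrections.}
\begin{enumerate}
\item $[\partial_{\bar z}\varphi]$ is not a holomorphic curve for $n\geq2$, and the $\mathbf O^k_\varphi$ with $k<n$ are not holomorphic subbundles. Only $\mathbf O^n_\varphi$ is, because stabilisation gives $\partial_{\bar z}\mathbf O^n_\varphi\subset\mathbf O^n_\varphi$.
\item The lift must be extended across every point where $\dim\mathbf O^n_\varphi<n$, not just across $\mathcal B_\varphi$. This needs an actual argument, for example finite area of the horizontal lift plus removable singularities, or the Eells--Wood Gauss-transform construction.
\end{enumerate}
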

\begin{proof}
    See \cite{Fernandez1}.
\end{proof}
The possible antipodal sign in the preceding statement has no effect on the degree or on the local deformation theory. We shall therefore suppress it from the notation and refer to the corresponding horizontal holomorphic curve simply as the twistor lift of $\varphi$.
\begin{definition}[Twistor lift]
    Let $\varphi:\Sigma\to\s^{2n}$ be a linearly full branched superminimal immersion. The unique linearly full horizontal holomorphic curve $\tilde\varphi:\Sigma\to\mathscr{Z}_n$ given by Proposition \ref{Proposition: correspondence holo+hor with supermin} is called the \textit{twistor lift} of $\varphi$.
\end{definition}
\begin{definition}\label{Definition: degree}
    Let $\psi:\Sigma\to\mathscr{Z}_n$ be a holomorphic map. Let $\mathcal{O}_{\mathscr{Z}_n}(1)$ be the unique ample generator of the Picard group $\operatorname{Pic}(\mathscr{Z}_n)\cong\z$ of $\mathscr{Z}_n$. The \textit{degree} of $\psi$ is the non-negative integer $d\in\n$ given by
    \begin{align*}
        d:=\big\langle\psi_{*}[\Sigma],c_1\big(\mathcal{O}_{\mathscr{Z}_n}(1)\big)\big\rangle=\int_{\Sigma}\psi^*c_1\big(\mathcal{O}_{\mathscr{Z}_n}(1)\big).
    \end{align*}
    If $\varphi:\Sigma\to\s^{2n}$ is a linearly full branched superminimal immersion, the \textit{degree} of $\varphi$ is the degree of its unique twistor lift $\tilde\varphi$.
\end{definition}
\begin{remark}\label{Remark: degrees of pullback bundles}
	It can be computed explicitly that 
	\begin{align*}
		c_1(H\mathscr{Z}_n)&=2\,c_1\big(\mathcal{O}_{\mathscr{Z}_n}(1)\big)\\
		c_1(V\mathscr{Z}_n)&=2(n-1)\,c_1\big(\mathcal{O}_{\mathscr{Z}_n}(1)\big)\\
		c_1(T^{1,0}\mathscr{Z}_n)&=c_1(H\mathscr{Z}_n)+c_1(V\mathscr{Z}_n)=2n\,c_1\big(\mathcal{O}_{\mathscr{Z}_n}(1)\big)
	\end{align*}
	In particular, for every holomorphic map $\psi:\Sigma\to\mathscr{Z}_n$ of degree $d$, we have
	\begin{align*}
		\deg(\psi^*T^{1,0}\mathscr{Z}_n)=2nd \quad\mbox{ and }\quad\deg(\psi^*V\mathscr{Z}_n)=2(n-1)d.
	\end{align*}
\end{remark}
\begin{remark}\label{Remark: degree and area}
    We fix the metric normalization used on the twistor space.
    Equip $\operatorname{Gr}(n,\c^{2n+1})$ with its standard
    Hermitian symmetric metric, normalized so that a real tangent
    vector represented by $A:P\to P^{\perp_{\mathrm H}}$ has squared norm $\operatorname{tr}(A^\dagger A)$. Let $g_{\mathscr Z_n}$ be its restriction to $\mathscr Z_n$, and let $\omega_{\mathscr Z_n}$ be the corresponding K\"ahler form. The Grassmannian form represents $\pi$ times the first Chern class
    of the dual determinant of its tautological bundle. Restriction gives
    \begin{align*}
        [\omega_{\mathscr Z_n}]
        =\pi c_1(\det\mathscr S_n^\vee)
        =2\pi c_1\bigl(\mathcal O_{\mathscr Z_n}(1)\bigr).
    \end{align*}
    We use this invariant K\"ahler metric in the covariant derivatives
    and adjoints on the twistor space below. In particular, its Chern
    connection on $T^{1,0}\mathscr Z_n$ is torsion-free.
    To compare this metric with the unit round metric, let $u$ be a
    real horizontal tangent vector at $P$, represented by
    $A(e_i)=a_i x_P$ in a Hermitian orthonormal basis of $P$.
    Differentiate $(x_P,e_i)=0$ and $(x_P,x_P)=1$ along a curve
    representing $u$. This gives
    \begin{align*}
        d\pi_P(u)
        &=-\sum_{i=1}^n\bigl(a_i\overline e_i+\overline a_i e_i\bigr),\\
        |d\pi_P(u)|_{\s^{2n}}^2
        &=2\sum_{i=1}^n|a_i|^2
        =2|u|_{g_{\mathscr Z_n}}^2.
    \end{align*}
    Consequently, if $\varphi:\Sigma\to\s^{2n}$ is a linearly full
    branched superminimal immersion of degree $d$ and
    $\pi\circ\tilde\varphi=\pm\varphi$, then
    \begin{align*}
        \operatorname{Area}(\varphi)
        &=\operatorname{E}(\varphi)
        =2\operatorname{E}_{g_{\mathscr Z_n}}(\tilde\varphi)\\
        &=2\int_\Sigma\tilde\varphi^*\omega_{\mathscr Z_n}
        =4\pi d.
    \end{align*}
    Here the equality between the energy of the holomorphic lift and
    its K\"ahler area uses the convention
    $\operatorname{E}=\frac12\int|d\varphi|^2$.
    Thus the degree remains the area of the spherical map divided by
    $4\pi$.
\end{remark}
\subsection{Linearized horizontality along balanced \texorpdfstring{Bor{\accent23 u}vka}{Boruvka} covers}
Here and throughout we will use the following notation. We write
\begin{align*}
    \overline{\omega_\Sigma}:=(T^*\Sigma)^{0,1}.
\end{align*}
Fix $\alpha\in(0,1)$ and denote by $C_d^{1,\alpha}(\Sigma,\mathscr Z_n)$ the Banach manifold of $C^{1,\alpha}$ maps $\psi:\Sigma\to\mathscr Z_n$ of topological degree $d$. We also set
\begin{align*}
    \operatorname{H}_d(\Sigma,\mathscr Z_n)
    :=
    \big\{
        \psi:\Sigma\to\mathscr Z_n:
        \psi\text{ is holomorphic and }\deg(\psi)=d
    \big\}.
\end{align*}
Unless explicitly stated otherwise, we use the Hermitian metrics induced by the invariant K\"ahler metric $g_{\mathscr Z_n}$ fixed in
Remark~\ref{Remark: degree and area} and by $g_0$, together with the corresponding connections. The auxiliary metrics on the line-bundle summands used for the peak-section construction will be specified separately. At a zero of the constraint section, its differential is understood as its canonical vertical differential.
Let
\begin{itemize}
    \item $\HH_{d}(\Sigma,\mathscr{Z}_n):=\{\psi:\Sigma\to\mathscr{Z}_n \mbox{ holomorphic, horizontal, and of degree } d\}$;
    \item $\HH_{d}^f(\Sigma,\mathscr{Z}_n):=\{\psi:\Sigma\to\mathscr{Z}_n \mbox{ holomorphic, horizontal, linearly full, and of degree } d\}$.
\end{itemize}
\begin{remark}
    Note that, by standard elliptic regularity and by definition, we have the following chain of inclusions:
    \begin{align*}
        \HH_{d}^f(\Sigma,\mathscr{Z}_n)\subset\HH_{d}(\Sigma,\mathscr{Z}_n)\subset\H_{d}(\Sigma,\mathscr{Z}_n)\subset C^{\omega}(\Sigma,\mathscr{Z}_n).
    \end{align*}
\end{remark}
We let $\mathscr{E}_d^{0,\alpha}(\Sigma,\mathscr{Z}_n),\mathscr{F}_d^{0,\alpha}(\Sigma,\mathscr{Z}_n)$ be the smooth Banach bundles over the smooth Banach manifold $C_d^{1,\alpha}(\Sigma,\mathscr{Z}_n)$ given by 
\begin{align*}
    \mathscr{E}_d^{0,\alpha}(\Sigma,\mathscr{Z}_n)&:=\bigsqcup_{\psi\in C_d^{1,\alpha}(\Sigma,\mathscr{Z}_n)}C^{0,\alpha}(\omega_{\Sigma}\otimes\psi^*T^{1,0}\mathscr{Z}_n)\\
    \mathscr{F}_d^{0,\alpha}(\Sigma,\mathscr{Z}_n)&:=\bigsqcup_{\psi\in C_d^{1,\alpha}(\Sigma,\mathscr{Z}_n)}C^{0,\alpha}(\overline{\omega_{\Sigma}}\otimes\psi^*T^{1,0}\mathscr{Z}_n)
\end{align*}
Let $J_n$ be the complex structure on $\mathscr{Z}_n$. We identify real tangent vectors to $\mathscr Z_n$ with their $(1,0)$-parts by
$U\mapsto\frac12(U-iJ_nU)$. The expressions below are understood through this identification. Let the operators $\partial:C_d^{1,\alpha}(\Sigma,\mathscr{Z}_n)\to\mathscr{E}_d^{0,\alpha}(\Sigma,\mathscr{Z}_n)$ and $\bar\partial:C_d^{1,\alpha}(\Sigma,\mathscr{Z}_n)\to\mathscr{F}_d^{0,\alpha}(\Sigma,\mathscr{Z}_n)$ be the standard ``del`` and ``del bar'' operators on $C_d^{1,\alpha}(\Sigma,\mathscr{Z}_n)$, given by 
\begin{align*}
    \partial\psi&:=\frac{1}{2}\big(d\psi-J_n\circ d\psi\circ j\big),\\
    \bar\partial\psi&:=\frac{1}{2}\big(d\psi+J_n\circ d\psi\circ j\big),
\end{align*}
for every $\psi\in C_d^{1,\alpha}(\Sigma,\mathscr{Z}_n)$. Let
\begin{align*}
    q:T^{1,0}\mathscr Z_n\rightarrow V\mathscr Z_n
\end{align*}
be the holomorphic quotient map in Lemma~\ref{Lemma: short exact sequence}. For each $\psi\in C_d^{1,\alpha}(\Sigma,\mathscr Z_n)$, we write
\begin{align*}
    \Pi_{\operatorname V}:=\operatorname{id}_{\omega_\Sigma}\otimes\,\psi^*q:\omega_\Sigma\otimes\psi^*T^{1,0}\mathscr Z_n\rightarrow\omega_\Sigma\otimes\psi^*V\mathscr Z_n.
\end{align*}
Under the smooth identification of $V\mathscr Z_n$ with $V_{\mathrm{geom}}\mathscr Z_n$ established above, $q$ corresponds to the orthogonal vertical projection. Consider the smooth section 
\begin{align*}
    F:C_d^{1,\alpha}(\Sigma,\mathscr{Z}_n)&\to\mathscr{F}_d^{0,\alpha}(\Sigma,\mathscr{Z}_n)\oplus\mathscr{G}_d^{0,\alpha}(\Sigma,\mathscr{Z}_n)\\
    \psi&\mapsto(\bar\partial\psi,\Pi_{\operatorname{V}}(\partial\psi))
\end{align*}
where $\mathscr G_d^{0,\alpha}(\Sigma,\mathscr Z_n)$ is the smooth Banach bundle over $C_d^{1,\alpha}(\Sigma,\mathscr Z_n)$ given by
\begin{align*}
    \mathscr{G}_d^{0,\alpha}(\Sigma,\mathscr{Z}_n)&:=\bigsqcup_{\psi\in C_d^{1,\alpha}(\Sigma,\mathscr{Z}_n)}C^{0,\alpha}(\omega_{\Sigma}\otimes\psi^*V\mathscr{Z}_n).
\end{align*}
The two components of $F$ encode the two defining conditions of a
holomorphic horizontal map: the equation
\begin{align*}
    \bar\partial\psi=0
\end{align*}
imposes holomorphicity, while
\begin{align*}
    \Pi_{\operatorname V}(\partial\psi)=0
\end{align*}
imposes horizontality. Consequently,
\begin{align*}
    F^{-1}(0)=\HH_d(\Sigma,\mathscr Z_n).
\end{align*}
At a holomorphic horizontal map $\psi$, the section $F$ vanishes.
Its vertical differential is therefore intrinsic and defines
\begin{align*}
    D_\psi:C^{1,\alpha}(\psi^*T^{1,0}\mathscr Z_n)
    \rightarrow
    C^{0,\alpha}(\overline{\omega_\Sigma}\otimes\psi^*T^{1,0}\mathscr Z_n)
    \oplus C^{0,\alpha}(\omega_\Sigma\otimes\psi^*V\mathscr Z_n).
\end{align*}
We define $L_\psi$ to be the second component of this differential.
In holomorphic target coordinates and a holomorphic frame of
$V\mathscr Z_n$, write $Q$ for the matrix of the quotient map.
Then, in a source coordinate $z$,
\begin{align}\label{Equation: intrinsic local linearization}
    L_\psi\xi
    =\bigl(Q(\psi)\partial_z\xi
       +dQ_\psi[\xi]\partial_z\psi\bigr)dz,
    \qquad
    D_\psi\xi=(\bar\partial_{\psi^*T^{1,0}\mathscr Z_n}\xi,L_\psi\xi).
\end{align}
These formulas apply to all $C^{1,\alpha}$ sections $\xi$ along $\psi$.
They are independent of the target coordinates and quotient frame:
the additional terms in changing a trivialization of the nonlinear section
are proportional to its value at $\psi$, which is zero.
In particular, $L_\psi$ maps holomorphic sections to holomorphic sections.
We do not need to choose a vertical linearization at nonzero values of $F$.
For the torsion-free tangent connection fixed in Remark~\ref{Remark: degree and area}, the same operator has the covariant expression
\begin{align*}
    L_\psi\xi=\Pi_{\operatorname V}(\nabla^{1,0}\xi)+(\nabla_\xi q)(\partial\psi).
\end{align*}
Here $\nabla q$ is computed using the connection on $\operatorname{Hom}(T^{1,0}\mathscr Z_n,V\mathscr Z_n)$ induced by the Chern connections on its two factors, and is evaluated along $\psi$.
\begin{remark}
    Note that $\psi\in\HH_d(\Sigma,\mathscr{Z}_n)$ if and only if $F(\psi)=0$. Moreover, for every $\psi\in\HH_d(\Sigma,\mathscr{Z}_n)$ we have
    \begin{align*}
        \ker dF(\psi)=\ker D_\psi=\big\{\xi\in H^0(\psi^*T^{1,0}\mathscr{Z}_n) \mbox{ : } L_{\psi}\xi=0\big\}\subset H^0(\psi^*T^{1,0}\mathscr{Z}_n).
    \end{align*}
\end{remark}
Thus, at a holomorphic horizontal map $\psi$, the infinitesimal deformation problem reduces to the restriction
\begin{align*}
    L_\psi:
    H^0\big(\Sigma,\psi^*T^{1,0}\mathscr Z_n\big)
    \rightarrow
    H^0\big(
        \Sigma,
        \omega_\Sigma\otimes\psi^*V\mathscr Z_n
    \big).
\end{align*}
The central objective of the present section is to prove that this map is surjective when $\psi$ is a balanced Bor{\accent23 u}vka cover of sufficiently large degree. The $\operatorname{SO}(3)$-homogeneous geometry of the Bor{\accent23 u}vka spheres also underlies the work of Ball--Madnick~\cite[Section~2]{BallMadnick2026Morse}, who compute their Morse index and nullity and show that every normal Jacobi field along these spheres is induced by a twistor deformation.
\begin{lemma}\label{Lemma: pullback universal bundles}
    Let $\varphi_n:\mathbb S^2\rightarrow\mathbb S^{2n}$ be the superminimal immersion of constant Gaussian curvature corresponding to a special orbit of the irreducible representation $\operatorname{SO}(3)\rightarrow\operatorname{SO}(2n+1)$ and let $\tilde\varphi_n:\mathbb{CP}^1\rightarrow\mathscr Z_n$ be its unique twistor lift. Then
    \begin{align*}
        \tilde\varphi_n^*\mathscr S_n
        &\cong
        \mathcal O_{\cp^1}(-n-1)^{\oplus n},
        \\
        \tilde\varphi_n^*\mathscr L_n
        &\cong
        \mathcal O_{\cp^1}.
    \end{align*}
\end{lemma}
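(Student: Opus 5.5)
The plan is to make everything explicit through the $\operatorname{SU}(2)$-model of the irreducible representation, and then read off both splittings from explicit global frames together with a degree count.

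\textbf{Model.} Identify $\c^{2n+1}$, with its invariant complex-bilinear form $(\,\cdot\,,\,\cdot\,)$, with $V_{2n}:=\operatorname{Sym}^{2n}(\c^2)$, the binary forms $p(x,y)$ of degree $2n$, carrying the $\operatorname{SL}(2,\c)$-invariant symmetric pairing obtained by $2n$-fold contraction with the symplectic form. For $t\in\c\subset\cp^1$ define
\begin{align*}
    P_t:=\bigl\{p\in V_{2n}\,:\,(x-ty)^{n+1}\ \text{divides}\ p\bigr\},
\end{align*}
and extend to $t=\infty$ by replacing $x-ty$ with $y$. Each $P_t$ is an $n$-dimensional subspace. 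It is totally isotropic because the invariant pairing of $(x-ty)^{n+1}a$ with $(x-ty)^{n+1}b$ vanishes: the pairing kills any two forms sharing a common linear factor to total multiplicity exceeding $2n$, as one checks at $t=0$ by weights and then transports by equivariance.

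The map $t\mapsto P_t$ is holomorphic and $\operatorname{SU}(2)$-equivariant. At the base point, $P_0$ is the sum of the weight spaces of strictly positive (or negative, according to convention) weight, so its projection $x_{P_0}$ is the weight-zero vector. Hence $\pi(P_t)$ traces the special orbit, namely the Bor\r{u}vka sphere $\varphi_n$. By the uniqueness in Proposition~\ref{Proposition: correspondence holo+hor with supermin}, $t\mapsto P_t$ is $\tilde\varphi_n$, up to the harmless antipodal sign. Horizontality can alternatively be checked directly, since $\frac{d}{dt}$ of $(x-ty)^{n+1}a$ lies in $(x-ty)^nV_n\subset P_t^\perp$. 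Identifying this curve with the Bor\r{u}vka lift is the only genuinely conceptual step.

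\textbf{The universal subbundle.} Consider the sections
\begin{align*}
    s_j(t):=(x-ty)^{n+1}x^jy^{n-1-j},\qquad j=0,\ldots,n-1.
\end{align*}
They are polynomials of degree $n+1$ in $t$ with values in $P_t$, so they define holomorphic sections of $\tilde\varphi_n^*\mathscr S_n\otimes\mathcal O_{\cp^1}(n+1)$, using the standard trivialization of $\mathcal O(n+1)$ on $\c$. For finite $t$ they are pointwise linearly independent, since the forms $x^jy^{n-1-j}$ are independent. At $t=\infty$, after dividing by $t^{n+1}$, their values are $(-y)^{n+1}x^jy^{n-1-j}$, which are again independent and span $P_\infty$. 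Hence these $n$ sections form a global holomorphic frame, so $\tilde\varphi_n^*\mathscr S_n\otimes\mathcal O(n+1)$ is trivial, and
\begin{align*}
    \tilde\varphi_n^*\mathscr S_n\cong\mathcal O(-n-1)^{\oplus n}.
\end{align*}
The main point to verify carefully is the behaviour of the frame at infinity; this is exactly the chart computation above.

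\textbf{The quotient line bundle.} The bilinear form induces an isomorphism $\c^{2n+1}/\mathscr S_n^\perp\cong\mathscr S_n^\vee$. Therefore
\begin{align*}
    \deg\tilde\varphi_n^*\mathscr S_n^\perp=-\deg\tilde\varphi_n^*\mathscr S_n^\vee=\deg\tilde\varphi_n^*\mathscr S_n,
\end{align*}
and consequently $\deg\tilde\varphi_n^*\mathscr L_n=\deg\tilde\varphi_n^*\mathscr S_n^\perp-\deg\tilde\varphi_n^*\mathscr S_n=0$. On $\cp^1$ a line bundle of degree zero is trivial, so $\tilde\varphi_n^*\mathscr L_n\cong\mathcal O_{\cp^1}$.

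For an explicit check, $P_t^\perp=(x-ty)^nV_n$. The section $t\mapsto(x-ty)^ny^n$ has degree $n$ in $t$ and is nowhere in $P_t$, but it is not a global frame by itself. It is therefore cleaner to rely on the degree argument above, which uses only the exactness of the sequences $0\to\mathscr S_n\to\mathscr S_n^\perp\to\mathscr L_n\to0$ and $0\to\mathscr S_n^\perp\to\c^{2n+1}\to\mathscr S_n^\vee\to0$.
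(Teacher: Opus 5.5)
Your proof is correct. Its skeleton matches the paper's: pass to the $\operatorname{Sym}^{2n}\mathbb C^2$ model, identify the lift with the $(n-1)$-st osculating bundle $E_{n-1}$ of the rational normal curve, then handle $\mathscr L_n$ by a degree-zero argument. The individual steps, however, are carried out differently.

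The paper works as follows:
\begin{itemize}
\item It deduces that the directrix is the rational normal curve of degree $2n$ from linear fullness and irreducibility of $\operatorname{Sym}^m$.
\item It reads off $\tilde\varphi_n^*\mathscr S_n=E_{n-1}$ from the osculating-space description of the twistor lift.
\item It computes $E_{n-1}$ through the duality $E_k\cong\mathcal P^k(\mathcal O_{\cp^1}(2n))^\vee$ and a cited splitting formula for principal-parts bundles.
\item For $\mathscr L_n$ it exhibits the nowhere-vanishing smooth section $x\mapsto[\varphi_n(x)]$.
\end{itemize}

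You proceed instead as follows:
\begin{itemize}
\item You write down $P_t=(x-ty)^{n+1}V_{n-1}$ directly and check isotropy and horizontality by the multiplicity-counting property of the invariant pairing. You then invoke uniqueness of the lift.
\item Your explicit frame $(x-ty)^{n+1}x^jy^{n-1-j}$ replaces the principal-parts machinery entirely. The same frame argument gives $E_k\cong\mathcal O_{\cp^1}(k-2n)^{\oplus(k+1)}$ for every $k$.
\item Your treatment of $\mathscr L_n$ is purely algebraic and actually stronger. It shows that $\mathscr L_n\cong\det\mathscr S_n^\perp\otimes(\det\mathscr S_n)^{-1}$ has degree zero along every holomorphic curve in $\mathscr Z_n$, not only along $\tilde\varphi_n$.
\end{itemize}

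What the paper's route buys is that the model is derived rather than posited. Its jet and principal-parts language is also the one reused later in the filtration lemma.

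One small point you should add. The uniqueness statement in Proposition~\ref{Proposition: correspondence holo+hor with supermin} concerns linearly full lifts, so you need $t\mapsto P_t$ to be linearly full. This is immediate: $\pi(P_t)$ sweeps out an orbit of an irreducible representation and therefore spans $\mathbb R^{2n+1}$.
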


\begin{proof}
    Since the Bor{\accent23 u}vka--Veronese immersion is unique up to
    congruence and the twistor construction is
    $\operatorname{SO}(2n+1)$-equivariant, it suffices to prove the
    statement for the standard Veronese representative. Indeed, every
    $A\in\operatorname{SO}(2n+1)$ induces a holomorphic automorphism
    \begin{align*}
        \widehat A:\mathscr Z_n\rightarrow\mathscr Z_n,
        \qquad
        P\mapsto A_{\mathbb C}P,
    \end{align*}
    which preserves the universal bundles $\mathscr S_n$ and
    $\mathscr L_n$. Therefore, if
    $\varphi_n'=A\circ\varphi_n$, then its twistor lift satisfies
    \begin{align*}
        \tilde\varphi_n'
        =
        \widehat A\circ\tilde\varphi_n,
    \end{align*}
    and the corresponding pullback bundles are naturally isomorphic.
    Let $F_n:\cp^1\to\cp^{2n}$ be the directrix curve of $\varphi_n$. This curve is holomorphic and linearly full. Moreover, the directrix construction commutes with ambient rotations, so $F_n$ is $\operatorname{SO}(3)$-equivariant. We regard these actions as $\operatorname{SU}(2)$-actions through the standard double covering $\operatorname{SU}(2)\to\operatorname{SO}(3)$.
    Write
    \begin{align*}
        F_n^*\mathcal O_{\cp^{2n}}(1)\cong\mathcal O_{\cp^1}(m).
    \end{align*}
    Pullback of linear forms gives an $\operatorname{SU}(2)$-equivariant map
    \begin{align*}
        (\c^{2n+1})^\vee\rightarrow H^0\bigl(\mathcal O_{\cp^1}(m)\bigr),
    \end{align*}
    which is injective because $F_n$ is linearly full. The target is the irreducible representation $\operatorname{Sym}^m((\c^2)^\vee)$. The image is therefore the entire target, and comparison of dimensions gives $m+1=2n+1$. Thus $m=2n$, and $F_n$ is induced by the complete linear system $\lvert\mathcal O_{\cp^1}(2n)\rvert$. In particular, it is a rational normal curve of degree $2n$.
    For every $k=0,\ldots,n-1$, let $E_k\to\cp^1$ denote the
    $k$-th osculating bundle of $F_n$. Thus, if $z$ is a local
    holomorphic coordinate and
    \begin{align*}
        \xi:U\rightarrow\c^{2n+1}\smallsetminus\{0\}
    \end{align*}
    is a local holomorphic lift of $F_n$, then
    \begin{align*}
        (E_k)_x
        =
        \operatorname{span}_{\c}
        \left\{
            \xi(x),
            \partial_z\xi(x),
            \ldots,
            \partial_z^k\xi(x)
        \right\}
        \qquad
        \forall\,x\in U.
    \end{align*}
    This definition is independent of the choices of $z$ and $\xi$.
    By the definition of the twistor lift,
    \begin{align*}
        \tilde\varphi_n^*\mathscr S_n
        =
        E_{n-1}.
    \end{align*}

    Let
    \begin{align*}
        \mathcal P^k
        \bigl(
            \mathcal O_{\cp^1}(2n)
        \bigr)
    \end{align*}
    denote the holomorphic bundle of principal parts of order $k$.
    Since $F_n$ is induced by the complete linear system
    $\lvert\mathcal O_{\cp^1}(2n)\rvert$, the jet-evaluation morphism
    \begin{align*}
        H^0\bigl(\cp^1,\mathcal O_{\cp^1}(2n)\bigr)
        \otimes\mathcal O_{\cp^1}
        \rightarrow
        \mathcal P^k\bigl(\mathcal O_{\cp^1}(2n)\bigr)
    \end{align*}
    is surjective for $k\leq2n$. Its dual is therefore an injective
    morphism
    \begin{align*}
        \mathcal P^k\bigl(\mathcal O_{\cp^1}(2n)\bigr)^\vee
        \rightarrow
        H^0\bigl(\cp^1,\mathcal O_{\cp^1}(2n)\bigr)^\vee
        \otimes\mathcal O_{\cp^1},
    \end{align*}
    whose image is precisely the $k$-th osculating bundle $E_k$. Indeed, choose a basis of $H^0(\mathcal O_{\cp^1}(2n))$ and write its local representatives as $f_0,\ldots,f_{2n}$. The corresponding local lift of $F_n$ is $\xi=(f_0,\ldots,f_{2n})$. The dual jet-evaluation map sends the functionals extracting the Taylor coefficients of orders $0,\ldots,k$ to $\xi,\partial_z\xi,\ldots,\partial_z^k\xi$, up to nonzero factorial factors. Consequently,
    \begin{align*}
        E_k
        \cong
        \mathcal P^k
        \bigl(\mathcal O_{\cp^1}(2n)\bigr)^\vee.
    \end{align*}
    The standard splitting formula for principal-parts bundles on
    $\cp^1$ (see e.g. \cite{Maakestad2008}) gives
    \begin{align*}
        \mathcal P^k
        \bigl(
            \mathcal O_{\cp^1}(2n)
        \bigr)
        \cong
        \mathcal O_{\cp^1}(2n-k)^{\oplus(k+1)}.
    \end{align*}
    Taking $k=n-1$, we obtain
    \begin{align*}
        \tilde\varphi_n^*\mathscr S_n
        =
        E_{n-1}
        &\cong
        \mathcal O_{\cp^1}
        \bigl(
            -2n+n-1
        \bigr)^{\oplus n}
        \\
        &=
        \mathcal O_{\cp^1}(-n-1)^{\oplus n}.
    \end{align*}
    Finally, the map
    \begin{align*}
        x\mapsto\bigl(x,[\varphi_n(x)]\bigr)
    \end{align*}
    defines a nowhere-vanishing smooth section of $\tilde\varphi_n^*\mathscr L_n$. Here $[\varphi_n(x)]$ denotes its class in $\tilde\varphi_n(x)^\perp/\tilde\varphi_n(x)$. This class is nonzero: the plane $\tilde\varphi_n(x)$ is isotropic, whereas $(\varphi_n(x),\varphi_n(x))=1$. Consequently,
    \begin{align*}
        c_1
        \bigl(
            \tilde\varphi_n^*\mathscr L_n
        \bigr)
        =
        0.
    \end{align*}
    Since every degree-zero holomorphic line bundle on $\cp^1$ is
    holomorphically trivial, it follows that
    \begin{align*}
        \tilde\varphi_n^*\mathscr L_n
        \cong
        \mathcal O_{\cp^1}.
    \end{align*}
    This concludes the proof.
\end{proof}
\begin{corollary}\label{Corollary: splitting along balanced Boruvka covers}
    Let $\varphi_n:\mathbb S^2\rightarrow\mathbb S^{2n}$ be the superminimal immersion of constant Gaussian curvature corresponding to a special orbit of the irreducible representation $\operatorname{SO}(3)\rightarrow\operatorname{SO}(2n+1)$ and let $\tilde\varphi_n:\mathbb{CP}^1\rightarrow\mathscr Z_n$ be its unique twistor lift. Then there exist integers
    \begin{align*}
        a_1,\ldots,a_{\frac{n(n+1)}{2}}\geq n+1,
    \end{align*}
    depending only on $n$, such that
    \begin{align}\label{Equation: Boruvka tangent vertical splitting}
    \begin{split}
        \tilde\varphi_n^*T^{1,0}\mathscr Z_n
        &\cong
        \bigoplus_{i=1}^{\frac{n(n+1)}{2}}
        \mathcal O_{\cp^1}(a_i),
        \\
        \tilde\varphi_n^*V\mathscr Z_n
        &\cong
        \mathcal O_{\cp^1}(2n+2)^{
            \oplus\frac{n(n-1)}{2}
        }.
    \end{split}
    \end{align}
    Consequently, if $f:\Sigma\rightarrow\cp^1$ is a balanced holomorphic cover of degree $\ell\in\n\smallsetminus\{0\}$ and $\psi:=\tilde\varphi_n\circ f$, we have
    \begin{align}\label{Equation: balanced Boruvka bundle splitting}
    \begin{split}
        \psi^*T^{1,0}\mathscr Z_n
        &\cong
        \bigoplus_{i=1}^{\frac{n(n+1)}{2}}
        L_{i,\ell},
        \\
        \psi^*V\mathscr Z_n
        &\cong
        \tilde L_{\ell}^{
            \oplus\frac{n(n-1)}{2}
        },
    \end{split}
    \end{align}
    where
    \begin{align*}
        L_{i,\ell}
        &:=
        f^*\mathcal O_{\cp^1}(a_i),
        \\
        \tilde L_{\ell}
        &:=
        f^*\mathcal O_{\cp^1}(2n+2).
    \end{align*}
    In particular,
    \begin{align}\label{Equation: balanced Boruvka summand degrees}
        \deg(L_{i,\ell})
        &=
        a_i\ell
        \geq
        (n+1)\ell,
        \qquad
        i=1,\ldots,\frac{n(n+1)}{2},
        \\
        \deg(\tilde L_{\ell})
        &=
        (2n+2)\ell.
    \end{align}
\end{corollary}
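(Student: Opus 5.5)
The plan is to obtain both splittings on $\cp^1$ from Lemma~\ref{Lemma: pullback universal bundles} and the exact sequence of Lemma~\ref{Lemma: short exact sequence}, and then to pull back by $f$.

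\textbf{Vertical bundle.} From Lemma~\ref{Lemma: pullback universal bundles} we have
\begin{align*}
    \tilde\varphi_n^*\mathscr S_n^\vee\cong\mathcal O_{\cp^1}(n+1)^{\oplus n}
    \qquad\mbox{and}\qquad
    \tilde\varphi_n^*\mathscr L_n\cong\mathcal O_{\cp^1}.
\end{align*}
Hence
\begin{align*}
    \tilde\varphi_n^*V\mathscr Z_n=\wedge^2\tilde\varphi_n^*\mathscr S_n^\vee\cong\mathcal O_{\cp^1}(2n+2)^{\oplus\frac{n(n-1)}2},
\end{align*}
since the exterior square of a direct sum of $n$ copies of a line bundle $\mathcal O(k)$ is the direct sum of $\binom n2$ copies of $\mathcal O(2k)$.

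\textbf{Tangent bundle.} The horizontal bundle satisfies
\begin{align*}
    \tilde\varphi_n^*H\mathscr Z_n\cong\tilde\varphi_n^*\mathscr S_n^\vee\otimes\tilde\varphi_n^*\mathscr L_n\cong\mathcal O(n+1)^{\oplus n}.
\end{align*}
By Grothendieck's theorem, $\tilde\varphi_n^*T^{1,0}\mathscr Z_n\cong\bigoplus_i\mathcal O(a_i)$ for some integers $a_i$. These depend only on $n$, since the Veronese representative is fixed up to congruence, and the proof of Lemma~\ref{Lemma: pullback universal bundles} shows that congruence does not change pullback bundles.

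To obtain the lower bound $a_i\geq n+1$, I would use the pulled-back exact sequence
\begin{align*}
    0\to\mathcal O(n+1)^{\oplus n}\to\tilde\varphi_n^*T^{1,0}\mathscr Z_n\to\mathcal O(2n+2)^{\oplus\binom n2}\to0.
\end{align*}
Its extension class lies in
\begin{align*}
    H^1\bigl(\mathcal O(n+1-2n-2)\bigr)^{\oplus}=H^1\bigl(\mathcal O(-n-1)\bigr)^{\oplus},
\end{align*}
which need not vanish, so the sequence need not split. The lower bound therefore has to come from another argument. I would twist by $\mathcal O(-n-1)$ and check that $H^1$ of the twisted bundle vanishes.

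A clean route is to tensor the sequence by $\mathcal O(-n-2)$ and take global sections. We have
\begin{align*}
    H^1\bigl(\mathcal O(-1)\bigr)=0
    \qquad\mbox{and}\qquad
    H^1\bigl(\mathcal O(n)\bigr)=0.
\end{align*}
The long exact sequence then gives $H^1\bigl(T\otimes\mathcal O(-n-2)\bigr)=0$, where $T:=\tilde\varphi_n^*T^{1,0}\mathscr Z_n$. This vanishing implies $a_i-n-2\geq-1$ for every $i$, that is, $a_i\geq n+1$. This is the step I expect to need the most care. Alternatively, one can argue more directly: any quotient line bundle of $T$ receives a nonzero map from at least one of the two outer terms, and both outer terms have all summands of degree at least $n+1$. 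The cohomological argument above is the one to write out.

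As a consistency check, $\sum_i a_i=2n\cdot\deg\tilde\varphi_n$ by Remark~\ref{Remark: degrees of pullback bundles}. The sum of the degrees of the outer terms is
\begin{align*}
    n(n+1)+(2n+2)\binom n2=n(n+1)^2,
\end{align*}
so $\deg\tilde\varphi_n=(n+1)^2/2$. This normalization is not needed for the statement.

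\textbf{Pullback by $f$.} This last step is immediate. Pulling back commutes with direct sums and tensor products, so applying $f^*$ to the two splittings gives the two decompositions of $\psi^*T^{1,0}\mathscr Z_n$ and $\psi^*V\mathscr Z_n$ in the statement. Finally, $\deg f^*\mathcal O(k)=k\ell$ because $f$ has degree $\ell$, which gives the degree formulas.
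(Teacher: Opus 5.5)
Your proof is correct and is essentially the paper's. Both pull back the universal bundles via Lemma~\ref{Lemma: pullback universal bundles}, take $\wedge^2$ for the vertical bundle, apply Birkhoff--Grothendieck to the pulled-back tangent sequence, and then pull back by $f$. The only inessential difference is in the lower bound: you obtain $a_i\geq n+1$ from $H^1\bigl(T\otimes\mathcal O_{\cp^1}(-n-2)\bigr)=0$, whereas the paper twists by $\mathcal O_{\cp^1}(-n-1)$ and proves global generation using $H^1(\cp^1,\mathcal O_{\cp^1})=0$.

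Your consistency check, which the proof does not use, contains an arithmetic slip. The outer degrees sum to $n(n+1)+(n+1)n(n-1)=n^2(n+1)$, not $n(n+1)^2$, so $\deg\tilde\varphi_n=n(n+1)/2$ (e.g.\ $3$ for $n=2$). Your value $(n+1)^2/2$ is not even an integer for even $n$.
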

\begin{proof}
    By Lemma~\ref{Lemma: pullback universal bundles}, we have
    \begin{align}\label{Equation: Boruvka universal splitting recalled}
        \tilde\varphi_n^*\mathscr S_n
        \cong
        \mathcal O_{\cp^1}(-n-1)^{\oplus n},
        \quad
        \tilde\varphi_n^*\mathscr L_n
        \cong
        \mathcal O_{\cp^1}.
    \end{align}
    Recall that the holomorphic tangent bundle of $\mathscr Z_n$ fits
    into the short exact sequence
    \begin{align}\label{Equation: holomorphic tangent sequence twistor}
        0
        \rightarrow
        \mathscr S_n^\vee\otimes\mathscr L_n
        \rightarrow
        T^{1,0}\mathscr Z_n
        \rightarrow
        \wedge^2\mathscr S_n^\vee
        \rightarrow
        0.
    \end{align}
    Pulling back by $\tilde\varphi_n$ and using
    \eqref{Equation: Boruvka universal splitting recalled}, we obtain
    \begin{align}\label{Equation: Boruvka pulled tangent exact sequence}
        0
        \rightarrow
        \mathcal O_{\cp^1}(n+1)^{\oplus n}
        \rightarrow
        \tilde\varphi_n^*T^{1,0}\mathscr{Z}_n
        \rightarrow
        \mathcal O_{\cp^1}(2n+2)^{
            \oplus\frac{n(n-1)}{2}
        }
        \rightarrow
        0.
    \end{align}
    In particular, using the identification
    \begin{align*}
        V\mathscr Z_n
        \cong
        \wedge^2\mathscr S_n^\vee,
    \end{align*}
    we immediately obtain
    \begin{align*}
        \tilde\varphi_n^*V\mathscr Z_n
        \cong
        \wedge^2
        \left(
            \tilde\varphi_n^*\mathscr S_n^\vee
        \right)
        \cong
        \wedge^2
        \left(
            \mathcal O_{\cp^1}(n+1)^{\oplus n}
        \right)
        \cong
        \mathcal O_{\cp^1}(2n+2)^{
            \oplus\frac{n(n-1)}{2}
        }.
    \end{align*}
    By the Birkhoff--Grothendieck splitting theorem, there exist integers
    $a_1,\ldots,a_{\frac{n(n+1)}{2}}$ such that
    \begin{align}\label{Equation: BG splitting E0}
        \tilde\varphi_n^*T^{1,0}\mathscr{Z}_n
        \cong
        \bigoplus_{i=1}^{\frac{n(n+1)}{2}}
        \mathcal O_{\cp^1}(a_i).
    \end{align}
    We claim that
    \begin{align*}
        a_i\geq n+1
        \qquad
        \forall\,
        i=1,\ldots,\frac{n(n+1)}{2}.
    \end{align*}
    Indeed, tensoring
    \eqref{Equation: Boruvka pulled tangent exact sequence} with
    $\mathcal O_{\cp^1}(-n-1)$ gives
    \begin{align}\label{Equation: twisted Boruvka tangent exact sequence}
        0
        \rightarrow
        \mathcal O_{\cp^1}^{\oplus n}
        \rightarrow
        \tilde\varphi_n^*T^{1,0}\mathscr{Z}_n\otimes\mathcal O_{\cp^1}(-n-1)
        \rightarrow
        \mathcal O_{\cp^1}(n+1)^{
            \oplus\frac{n(n-1)}{2}
        }
        \rightarrow
        0.
    \end{align}
    Since
    \begin{align*}
        H^1(\cp^1,\mathcal O_{\cp^1})=0,
    \end{align*}
    the induced map on global sections
    \begin{align*}
        H^0\left(
            \cp^1,
            \tilde\varphi_n^*T^{1,0}\mathscr{Z}_n\otimes\mathcal O_{\cp^1}(-n-1)
        \right)
        \rightarrow
        H^0\big(
            \cp^1,
            \mathcal O_{\cp^1}(n+1)^{
                \oplus\frac{n(n-1)}{2}
            }
        \big)
    \end{align*}
    is surjective. Since both
    $\mathcal O_{\cp^1}^{\oplus n}$ and
    $\mathcal O_{\cp^1}(n+1)^{
        \oplus\frac{n(n-1)}{2}
    }$
    are globally generated, it follows from
    \eqref{Equation: twisted Boruvka tangent exact sequence} that
    \begin{align*}
        \tilde\varphi_n^*T^{1,0}\mathscr{Z}_n\otimes\mathcal O_{\cp^1}(-n-1)
    \end{align*}
    is globally generated. On the other hand, by
    \eqref{Equation: BG splitting E0},
    \begin{align*}
        \tilde\varphi_n^*T^{1,0}\mathscr Z_n\otimes\mathcal O_{\cp^1}(-n-1)
        \cong
        \bigoplus_{i=1}^{\frac{n(n+1)}{2}}
        \mathcal O_{\cp^1}(a_i-n-1).
    \end{align*}
    A line bundle $\mathcal O_{\cp^1}(k)$ is globally generated if and
    only if $k\geq0$. Hence
    \begin{align*}
        a_i-n-1\geq0
    \end{align*}
    for every $i$, proving the claim.
    Finally, since $\psi=\tilde\varphi_n\circ f$, pulling back \eqref{Equation: Boruvka tangent vertical splitting} by $f$ yields
    \begin{align*}
        \psi^*T^{1,0}\mathscr Z_n
        &=
        f^*\tilde\varphi_n^*T^{1,0}\mathscr{Z}_n
        \cong
        \bigoplus_{i=1}^{\frac{n(n+1)}{2}}
        f^*\mathcal O_{\cp^1}(a_i),
        \\
        \psi^*V\mathscr Z_n
        &=
        f^*\tilde\varphi_n^*V\mathscr Z_n
        \cong
        f^*\mathcal O_{\cp^1}(2n+2)^{
            \oplus\frac{n(n-1)}{2}
        }.
    \end{align*}
    Since $\deg(f)=\ell$, we have
    \begin{align*}
        \deg
        \big(
            f^*\mathcal O_{\cp^1}(k)
        \big)
        =
        k\ell
    \end{align*}
    for every $k\in\mathbb Z$, and
    \eqref{Equation: balanced Boruvka summand degrees} follows.
\end{proof}
\begin{remark}\label{Remark: satisfying assumptions for holomorphic peak sections}
    The holomorphic splittings above hold for every holomorphic cover $f$ of degree $\ell$. For the peak-section construction, equip $\mathcal O_{\cp^1}(a)$ with its Fubini--Study metric $h_a$ and $f^*\mathcal O_{\cp^1}(a)$ with $f^*h_a$, where $a$ is one of the fixed integers $a_i$ or $2n+2$. These are auxiliary metrics on the summands. The balanced estimates become essential when the summands $L_{i,\ell}$ are equipped with their natural pullback Hermitian metrics. Their Chern curvatures are then proportional to $f^*\omega_{\cp^1}$ and hence to $|df|_{g_0}^2\omega_0$. The averaged lower bound in the definition of a balanced cover will therefore provide precisely the averaged curvature positivity required in Proposition~\ref{Proposition: holomorphic peak sections}. The natural-scale curvature bounds required there follow from Remark~\ref{Remark: balanced pullback bounded geometry}.
\end{remark}
\subsection{A polynomially bounded holomorphic right inverse}
We establish the regularity of balanced Bor{\accent23 u}vka covers by constructing a polynomially bounded holomorphic right inverse for $L_\psi$. A homogeneous filtration reduces the construction to a finite collection of jet equations. We solve these equations across the branch locus by prescribing scalar jets and applying a global $\bar\partial$-correction, then lift the solutions through the filtration.
\begin{lemma}[A filtration of linearized horizontality]
\label{Lemma: filtration of linearized horizontality}
    Let $n\geq2$, and set
    \begin{align*}
        E_0:=\tilde\varphi_n^*T^{1,0}\mathscr Z_n,
        \qquad
        V_0:=\tilde\varphi_n^*V\mathscr Z_n.
    \end{align*}
    Let $q_0:E_0\to V_0$ be the pulled-back holomorphic quotient.
    There exist $\operatorname{SL}(2,\c)$-equivariant holomorphic
    filtrations
    \begin{align*}
        0=\mathcal E_0\subset\mathcal E_1\subset\cdots
        \subset\mathcal E_n=E_0,
        \qquad
        0=\mathcal V_0=\mathcal V_1\subset\cdots
        \subset\mathcal V_n=V_0
    \end{align*}
    satisfying $q_0(\mathcal E_j)=\mathcal V_j$ and
    $\mathcal E_1\cong\mathcal O_{\cp^1}(2)$.
    For every $2\leq j\leq n$, there are holomorphic bundle
    isomorphisms
    \begin{align*}
        \mathcal E_j/\mathcal E_{j-1}
        &\cong J^{j-1}\mathcal O_{\cp^1}(4j-2),\\
        \mathcal V_j/\mathcal V_{j-1}
        &\cong J^{j-2}\mathcal O_{\cp^1}(4j-2),
    \end{align*}
    under which the map induced by $q_0$ is jet truncation.
    Here $J^mL$ denotes the holomorphic jet bundle of order $m$
    of a line bundle $L$ on $\cp^1$.

    Moreover, let $f:\Sigma\to\cp^1$ be a holomorphic map
    from a Riemann surface, and put
    $\psi:=\tilde\varphi_n\circ f$.
    The operator $L_\psi$ maps local holomorphic sections of
    $f^*\mathcal E_j$ to local holomorphic sections of
    $\omega_\Sigma\otimes f^*\mathcal V_j$.
    For $j=m+1$, where $1\leq m\leq n-1$, its induced quotient
    operator is
    \begin{align*}
        D_{m,f}(u_0,\ldots,u_m)
        =\bigl(du_a-u_{a+1}\,d(w\circ f)\bigr)_{a=0}^{m-1}
    \end{align*}
    under the pullbacks of the preceding isomorphisms.
    Here $w$ is a local holomorphic coordinate on $\cp^1$,
    and the $u_a$ are the components in the pulled-back
    derivative coordinates of the jet bundle, associated with
    $w$ and a local holomorphic frame of
    $\mathcal O_{\cp^1}(4m+2)$.
\end{lemma}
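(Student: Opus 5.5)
The plan is to work entirely on the $\operatorname{SL}(2,\c)$-homogeneous model and obtain the filtration from representation theory, then pull back by $f$. First, describe $\tilde\varphi_n$ through the rational normal curve $F_n$ of degree $2n$ from Lemma~\ref{Lemma: pullback universal bundles}. Identify $\c^{2n+1}\cong\operatorname{Sym}^{2n}(\c^2)^\vee$ and $\tilde\varphi_n^*\mathscr S_n=E_{n-1}\cong\mathcal P^{n-1}(\mathcal O(2n))^\vee$. Using the jet filtration of principal parts, $E_{k}/E_{k-1}\cong\mathcal O(2n-2k)\otimes\omega^{k}$, which is $\cong\mathcal O(-2k)$ up to twist; this is the osculating flag. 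Dually, $\tilde\varphi_n^*\mathscr S_n^\vee$ carries the quotient filtration with graded pieces $\mathcal O(2k)$ for $k=0,\dots,n-1$, each twisted by $\mathcal O(n+1)$-type normalisations.

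Next define $\mathcal E_j$ as the subbundle of $E_0=\tilde\varphi_n^*T^{1,0}\mathscr Z_n\subset\operatorname{Hom}(E_{n-1},\c^{2n+1}/E_{n-1})$ consisting of those infinitesimal motions $A$ that kill the osculating space $E_{n-1-j}$. Equivalently, these are the motions that only move the top $j$ steps of the flag. Set $\mathcal V_j:=q_0(\mathcal E_j)$. Then $\mathcal V_j$ consists of the skew forms on $E_{n-1}$ that vanish on $E_{n-1-j}\times E_{n-1}$. For $j=1$, such a skew form is already zero, so $\mathcal V_1=0$. The piece $\mathcal E_1$ is the image of the derivative of the osculating curve, i.e.\ $T\cp^1\cong\mathcal O(2)$, through $d\tilde\varphi_n$. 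Everything is $\operatorname{SL}(2,\c)$-equivariant because the flag is.

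Then identify the graded quotients. $\mathcal E_j/\mathcal E_{j-1}$ is an extension of homomorphisms from $E_{n-j}/E_{n-j-1}$ into the successive quotients. Its rank and degree can be counted from the splitting in Corollary~\ref{Corollary: splitting along balanced Boruvka covers}: the rank is $j$ and the degree is $j(4j-2)-j(j-1)$. These match $J^{j-1}\mathcal O(4j-2)$, whose graded pieces are $\mathcal O(4j-2-2a)$. Similarly, $\mathcal V_j/\mathcal V_{j-1}$ has rank $j-1$, matching $J^{j-2}\mathcal O(4j-2)$. To obtain the jet-bundle isomorphism, rather than merely a graded one, I would exhibit an explicit equivariant holomorphic section. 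The idea is that a section of $\mathcal E_j/\mathcal E_{j-1}$ is determined by a scalar section $u_0$ of $\mathcal O(4j-2)$ through its successive $w$-derivatives, because horizontality of the flag forces each level to be the derivative of the previous one; this is the Frenet-type structure of osculating curves. Under this identification, $q_0$ forgets the top derivative, since the skew part pairs $E_{n-j}$ with the last quotient only through the highest component. This gives jet truncation.

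Finally, for the quotient operator, use the local formula \eqref{Equation: intrinsic local linearization} in an adapted holomorphic frame of the flag. Along $\psi=\tilde\varphi_n\circ f$, $\partial\psi=d(w\circ f)\,\partial_w\tilde\varphi_n$. Here $dQ[\xi]\partial_w\tilde\varphi_n$ measures the failure of a deformation to be compatible with differentiation of the flag. In the jet coordinates this produces exactly $-u_{a+1}\,d(w\circ f)$, while $Q\,\partial\xi$ produces $du_a$. The fact that $L_\psi$ preserves the filtration follows from equivariance together with the chain rule.

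The main obstacle I expect is the third step: pinning down the canonical (non-split) jet-bundle structure and verifying that the induced operator has exactly this form with unit coefficients, not merely up to constants. I would first do the computation on the model with the explicit Veronese frame $\xi=(1,w,\dots,w^{2n})$ and then rescale frames.
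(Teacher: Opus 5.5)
There is a genuine gap in your choice of filtration, and in the claim that $L_\psi$-compatibility ``follows from equivariance together with the chain rule''.

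\textbf{The filtration is not preserved by $L$.} Your subbundles consist of motions annihilating the osculating space of order $n-1-j$. They are $\operatorname{SL}(2,\mathbb C)$-equivariant and have the right ranks; at $[1:0]$ they even have the same $H$-weights as the correct filtration. But for $n\geq3$ they are not preserved by $L$ (for $n=2$ there is nothing to check).

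To see this, set $v_a:=\partial_w^a(e_0+we_1)^{2n}$. Then $\partial_wv_a=Xv_a=v_{a+1}$, and in the paper's normalization $(v_a,v_b)=(-1)^a(2n)!\,\delta_{a+b,2n}$. Moreover $P(w)=\operatorname{span}\{v_0,\dots,v_{n-1}\}$. A tangent vector $\zeta$ is given by representatives $\eta_i$ of $\zeta(v_i)$ modulo $P$. Linearizing the horizontality relations $(\partial_wv_i,v_k)=0$ gives
\[
(L\zeta)(v_i,v_k)=(\partial_w\eta_i,v_k)+(v_{i+1},\eta_k),
\]
and this is independent of the representatives by horizontality of $\tilde\varphi_n$.

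Now take $\eta_i=0$ for $i\leq n-3$, $\eta_{n-2}=v_{n+1}$, and $\eta_{n-1}=v_{n+2}$. The isotropy relations hold, and $\zeta$ annihilates $\operatorname{span}\{v_0,\dots,v_{n-3}\}$, so $\zeta$ is a holomorphic section of your $\mathcal E_2$. However,
\[
(L\zeta)(v_{n-3},v_{n-1})=(v_{n-2},v_{n+2})\neq0 .
\]
Hence $L\zeta$ does not vanish on $\operatorname{span}\{v_0,\dots,v_{n-3}\}\times P$, i.e.\ $L\zeta$ is not in your $\omega\otimes\mathcal V_2$.

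Structurally, $\zeta=\sigma_{X^3}-h$. Here $\sigma_{X^3}\in\ker L$ is the fundamental section of $X^3$, which sends $v_i\mapsto[v_{i+3}]$. The other term $h$ is the horizontal section $v_{n-3}\mapsto[v_n]$, and $(Lh)(v_{n-3},v_{n-1})=(v_{n+1},v_{n-1})\neq0$. So the correction $h$ lives at level three. Your filtration is therefore shifted by one step against $L$, and no quotient operator $\mathcal E_j/\mathcal E_{j-1}\to\omega\otimes\mathcal V_j/\mathcal V_{j-1}$ is induced.

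\textbf{What is actually needed.} Each $\mathcal E_j$ must be locally generated by sections in $\ker L$. Equivariance cannot supply this, since your filtration and the correct one are both equivariant with identical weights. The fundamental sections satisfy $L\sigma_A=0$, because $e^{tA}\circ\psi$ stays horizontal. Together with the Leibniz rule $L(\phi\zeta)=\phi L\zeta+\partial\phi\otimes q_0(\zeta)$, this is what gives compatibility.

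So decompose $\mathfrak{so}(2n+1,\mathbb C)=U_1\oplus U_3\oplus\cdots\oplus U_{2n-1}$ under $\operatorname{ad}\mathfrak{sl}_2$, with $U_s$ generated by $X^s$. Then let $\mathcal E_j$ be spanned by the values of $\sigma_A$ for $A\in U_1\oplus\cdots\oplus U_{2j-1}$; this is the paper's construction. Its $\mathcal E_2$ contains $\sigma_{X^3}$, which moves $v_{n-3}$, so it is not adapted to the osculating flag in your sense.

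With this choice, the Leibniz rule gives compatibility at once. The ranks and the jet structure come from a weight computation at $[1:0]$ and the equivariant map $A\mapsto j^m\lambda_A$.

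Your ``unit coefficient'' worry also disappears without any explicit frame computation. The induced quotient operator and $\bigl(du_a-u_{a+1}\,d(w\circ f)\bigr)_a$ have the same symbol, namely truncation. Both kill the jets $f^*j^m\lambda_A$, which span every fibre. Hence their zeroth-order difference vanishes.
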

\begin{proof}
    \phantom{.}

    \medskip
    \noindent
    \textbf{Step 1: a filtration of the homogeneous tangent sequence}.
    We first construct filtrations of $E_0$ and $V_0$ for which the
    induced maps
    \begin{align*}
        \overline q_j:
        \mathcal E_j/\mathcal E_{j-1}
        \longrightarrow
        \mathcal V_j/\mathcal V_{j-1}
    \end{align*}
    have source rank $j$, target rank $j-1$, and one-dimensional
    kernel. We also compute the action of the stabilizer of
    $[1:0]$ on these quotients and identify the kernel explicitly.
    These data will allow us, in Step~2, to identify the quotient
    bundles with jet bundles and the maps $\overline q_j$ with
    jet truncation. We construct the filtrations using the
    holomorphic sections induced by infinitesimal orthogonal
    transformations, and compute their ranks by evaluating
    these sections at $[1:0]$. As in the proof of Lemma~\ref{Lemma: pullback universal bundles}, we identify the complexified irreducible representation defining the Bor{\accent23 u}vka sphere with $\operatorname{Sym}^{2n}\c^2$ by an $\operatorname{SU}(2)$-equivariant isomorphism. Its monomial basis is
    \begin{align*}
        b_i:=e_0^{2n-i}e_1^i
        \qquad\forall\,i=0,\ldots,2n,
    \end{align*}
    where $\{e_0,e_1\}$ is the standard basis of $\c^2$. The identification is chosen so that the standard complex bilinear pairing on $\c^{2n+1}$ has, in this basis, the expression
    \begin{align*}
        (b_i,b_j)=\frac{(-1)^i}{\binom{2n}{i}}\delta_{i+j,2n}
        \qquad\forall\,i,j=0,\ldots,2n.
    \end{align*}
    The displayed form is $\operatorname{SL}(2,\c)$-invariant.
    By irreducibility, the standard complex bilinear pairing
    transported by the equivariant identification is a nonzero
    scalar multiple of this form. Rescaling the identification
    therefore gives the stated normalization while preserving
    equivariance. Define the linear endomorphisms $X,Y,H\in\mathfrak{so}(\operatorname{Sym}^{2n}\c^2)=\mathfrak{so}(2n+1,\c)$ by
    \begin{align*}
        Xb_i:=(2n-i)b_{i+1}, \qquad Yb_i:=ib_{i-1}, \qquad Hb_i:=(2n-2i)b_i.
    \end{align*}
    Here $Xb_{2n}=Yb_0=0$. These operators satisfy $[Y,X]=H$, $[H,X]=-2X$, and $[H,Y]=2Y$. The directrix of $\tilde\varphi_n$ is the rational normal curve $[1:w]\mapsto[(e_0+we_1)^{2n}]$. Its first $n$ osculating vectors span $e^{wX}P_0$, where $P_0:=\operatorname{span}_\c\{b_0,\ldots,b_{n-1}\}$. Hence
    \begin{align*}
        \tilde\varphi_n([1:w])=e^{wX}P_0.
    \end{align*}
    For endomorphisms, write $(\operatorname{ad}A)(B):=[A,B]$.
    We organize the infinitesimal orthogonal transformations into
    subspaces invariant under the $\operatorname{SL}(2,\c)$ action,
    so that their evaluated images give equivariant subbundles.
    We start with odd powers of $X$, which remain skew-adjoint. Thus, for odd $s\in\{1,3,\ldots,2n-1\}$, we let $A_{s,a}\in\mathfrak{so}(2n+1,\c)$ be given by
    \begin{align*}
        A_{s,a}:=(\operatorname{ad}Y)^aX^s,
    \end{align*}
    and we define
    \begin{align*}
        U_s:=\operatorname{span}_\c\{A_{s,a} \mbox{ : } 0\leq a\leq 2s\}\subset\mathfrak{so}(2n+1,\c).
    \end{align*}
    Since
    \begin{align*}
        e^{tY}X^se^{-tY}=(X+tH-t^2Y)^s,
    \end{align*}
    we have $A_{s,2s}=(-1)^s(2s)!Y^s\neq0$ and $A_{s,2s+1}=0$. The commutator identities give
    \begin{align}
        \label{Equation: right inverse representation identities}
        [H,A_{s,a}]&=2(a-s)A_{s,a},\\
        \nonumber
        [Y,A_{s,a}]&=A_{s,a+1}, \hspace{2.75cm} (a\leq 2s-1)\\
        [X,A_{s,a}]&=a(2s-a+1)A_{s,a-1} \qquad(a\geq1),
        \nonumber
    \end{align}
    while $[X,A_{s,0}]=0$. The non-vanishing of $A_{s,2s}$ shows that every earlier $A_{s,a}$ is non-zero. Their distinct eigenvalues for $\operatorname{ad}H$ show
    that they are linearly independent. The commutator identities
    also show that $U_s$ is an irreducible $\mathfrak{sl}(2,\c)$-module
    for the action by commutators with $X,Y,H$, of dimension $2s+1$. The Casimir operator
    \begin{align*}
        (\operatorname{ad}H)^2+2(\operatorname{ad}X)(\operatorname{ad}Y)+2(\operatorname{ad}Y)(\operatorname{ad}X)\in\operatorname{End}(\mathfrak{so}(2n+1,\c))
    \end{align*}
    acts on $U_s$ by multiplication by $4s(s+1)$, as follows from \eqref{Equation: right inverse representation identities}. These eigenvalues are distinct, so the sum of the $U_s$ is direct. Since
    \begin{align*}
        \sum_{j=1}^n(4j-1)=n(2n+1)=\dim_\c\mathfrak{so}(2n+1,\c),
    \end{align*}
    they exhaust $\mathfrak{so}(2n+1,\c)$.
    For $A\in\mathfrak{so}(2n+1,\c)$, its fundamental section is the infinitesimal deformation induced by the orthogonal action:
    \begin{align*}
        \sigma_A(w):=\left.\frac{d}{dt}\right|_{t=0}e^{tA}\tilde\varphi_n(w).
    \end{align*}
    It is a holomorphic section of $E_0$. Set $\mathcal E_0=0$ and define the image of the holomorphic evaluation map by
    \begin{align*}
        (\mathcal E_j)_w&:=\operatorname{span}_\c\{\sigma_A(w) \mbox{ : } A\in U_1\oplus U_3\oplus\cdots\oplus U_{2j-1}\},\\
        \mathcal V_j&:=q_0(\mathcal E_j).
    \end{align*}
    A filtration here means these nested families of subbundles; the quotient $\mathcal E_j/\mathcal E_{j-1}$ records the new directions added at stage $j$. By $\operatorname{SL}(2,\c)$-equivariance, the evaluation maps at $p\in\cp^1$ and $g\cdot p$ are related by linear isomorphisms of their domains and target fibres, and therefore have the same rank. Since $\operatorname{SL}(2,\c)$ acts transitively on $\cp^1$, this rank is independent of $p$. The quotient map $q_0$ is also equivariant, so the same argument applies to its compositions with the evaluation maps. These are holomorphic bundle maps of constant rank. Hence their images $\mathcal E_j$ and $\mathcal V_j$ are holomorphic subbundles.
    
    \smallskip
    \noindent
    We now compute the ranks of the holomorphic subbundles $\mathcal E_j$ and $\mathcal V_j$. Clearly, it suffices to work at $[1:0]\in\cp^1$, where $\tilde\varphi_n([1:0])=P_0$. 
    Under the usual identification of the tangent space of the Grassmannian with
    $\operatorname{Hom}(P_0,\c^{2n+1}/P_0)$, the tangent vector induced by an endomorphism $A$ is
    \begin{align*}
        \operatorname{ev}_0(A):P_0&\longrightarrow\c^{2n+1}/P_0,\\
        v&\longmapsto [Av],
    \end{align*}
    where ``$[\,\cdot\,]$'' stands for the equivalence class of a vector modulo $P_0$. 
    Let $r\geq1$ be an integer. We say that an endomorphism $A$ has $H$-weight $-2r$ if $[H,A]=-2rA$. We use the same terminology for the induced
    action on $T_{P_0}^{1,0}\mathscr Z_n$. More explicitly, viewing a tangent vector to $\mathscr Z_n$ as a map $T:P_0\to\c^{2n+1}/P_0$, this action is
    \begin{align*}
        H\cdot T:=\overline H\circ T-T\circ(H|_{P_0}),
    \end{align*}
    where $\overline H$ is the endomorphism induced by $H$ on $\c^{2n+1}/P_0$. We denote its weight-$-2r$ subspace by
    \begin{align*}
        \bigl(T_{P_0}^{1,0}\mathscr Z_n\bigr)_{-2r}:=\left\{T\in T_{P_0}^{1,0}\mathscr Z_n:
        H\cdot T=-2rT\right\}.
    \end{align*}
    Evaluation preserves weights, since $H\cdot\operatorname{ev}_0(A)=\operatorname{ev}_0([H,A])$. Since
    \begin{align*}
        [H,A_{s,a}]=2(a-s)A_{s,a},
    \end{align*}
    the subspace of $U_s$ consisting of the endomorphisms of weight $-2r$ is zero when $s<r$ and is spanned by $A_{s,s-r}$ when $s\geq r$.
    To determine whether these endomorphisms give independent
    tangent vectors after evaluation, we express their coefficients
    in terms of polynomials. We claim that there is a polynomial
    $p_{s,r}$ such that
    \begin{align}
        \label{Equation: right inverse polynomial weight vectors}
        A_{s,s-r}=X^r p_{s,r}(H),\qquad\deg p_{s,r}=s-r.
    \end{align}
    To prove this claim, first note that
    \begin{align*}
        [Y,X^r]&=rX^{r-1}(H-r+1),\\
        [Y,p(H)]&=Y\bigl(p(H)-p(H+2)\bigr),\\
        XY&=\frac{(2n-H)(2n+H+2)}4.
    \end{align*}
    The first identity follows by expanding the commutator of $Y$ with $X^r$, the second follows from $HY=Y(H+2)$, and the third can be checked on each basis vector $b_i$. Consequently,
    \begin{align*}
        [Y,X^rp(H)]=X^{r-1}\Bigg(r(H-r+1)p(H)+\frac{(2n-H)(2n+H+2)}4\bigl(p(H)-p(H+2)\bigr)\Bigg).
    \end{align*}
    If $p$ has degree $a$ and leading coefficient $c_a\neq0$, the two terms in parentheses contribute respectively $rc_a$ and $\frac a2c_a$ to the coefficient of $H^{a+1}$. Thus the polynomial in parentheses has degree $a+1$ and leading coefficient
    \begin{align*}
        \left(r+\frac a2\right)c_a\neq0.
    \end{align*}
    Starting from $A_{s,0}=X^s$, each successive commutator with $Y$ therefore decreases the power of $X$ by one and increases the polynomial degree by one, as long as the resulting power of $X$ remains positive. After $s-r$ commutators, this gives \eqref{Equation: right inverse polynomial weight vectors}. In particular, for $A=X^rp(H)$,
    \begin{align*}
        \operatorname{ev}_0(A)(b_i)=p(2n-2i)\bigg[\frac{(2n-i)!}{(2n-i-r)!}\,b_{i+r}\bigg]
    \end{align*}
    whenever $0\leq i\leq n-1$ and $i+r\leq2n$. The right-hand side of the above equation is zero when $i+r<n$, while $X^rb_i=0$ when $i+r>2n$. Thus the only indices that contribute are
    \begin{align*}
        I_r:=
        \left\{i\in\n\cup\{0\}:
        \max(0,n-r)\leq i\leq\min(n-1,2n-r)\right\}.
    \end{align*}
    For $1\leq r\leq2n-1$, 
    \begin{align*}
        |I_r|=
        \begin{cases}
            r,&1\leq r\leq n,\\
            2n-r+1,&n<r\leq2n-1.
        \end{cases}
    \end{align*}
    All the factorial coefficients above are nonzero for $i\in I_r$. Consequently,
    \begin{align*}
        \operatorname{ev}_0\bigl(X^rp(H)\bigr)=0
        \quad\Leftrightarrow\quad
        p(2n-2i)=0
        \qquad\forall\,i\in I_r.
    \end{align*}
    Therefore, if $\deg p<|I_r|$ and $\operatorname{ev}_0(X^rp(H))=0$, then $p$ has more distinct roots than its degree and must be the zero polynomial. It follows that linearly independent polynomials of degree
    less than $|I_r|$ give linearly independent elements of
    $\operatorname{Hom}(P_0,\c^{2n+1}/P_0)$ under the assignment
    \begin{align*}
        p\mapsto\operatorname{ev}_0(X^rp(H)).
    \end{align*}
    For the polynomials $p_{s,r}$, these maps belong to
    $T_{P_0}^{1,0}\mathscr Z_n$, since the corresponding
    endomorphisms are skew-adjoint. We will apply this observation
    to count the independent tangent directions contributed by
    the modules $U_s$.

    \smallskip
    \noindent
    To determine when the preceding modules already span an
    entire tangent weight space, we compute its dimension for
    $1\leq r\leq n$. A map sending $b_i$ to $[b_k]$ has
    $H$-weight $(2n-2k)-(2n-2i)=2(i-k)$.
    Hence a tangent vector $T$ of weight $-2r$ has the form
    \begin{align*}
        T(b_i)=
        \begin{cases}
            t_i b_{i+r}\mod P_0,&n-r\leq i\leq n-1,\\
            0,&0\leq i<n-r.
        \end{cases}
    \end{align*}
    Differentiating the isotropy condition for $P_0$ gives
    \begin{align*}
        (T(b_i),b_{i'})+(b_i,T(b_{i'}))=0
        \qquad\forall\,i,i'=0,\ldots,n-1,
    \end{align*}
    where the pairings are independent of the representatives
    chosen modulo $P_0$.

    \smallskip
    \noindent
    The coefficient $t_{n-r}$ sends $b_{n-r}$ to $b_n$ and is
    unconstrained, since $b_n$ is orthogonal to $P_0$.
    For the remaining indices $n-r+1,\ldots,n-1$, the only
    nontrivial equations occur when
    \begin{align*}
        i+i'=2n-r.
    \end{align*}
    Each pair of distinct indices satisfying this relation contributes
    two coefficients subject to one nontrivial linear equation, and
    therefore one independent parameter. If $i=i'$, the corresponding
    equation forces $t_i=0$.
    The involution $i\mapsto2n-r-i$ has
    $\lfloor(r-1)/2\rfloor$ pairs of distinct indices. Hence
    \begin{align*}
        \dim_\c
        \bigl(T_{P_0}^{1,0}\mathscr Z_n\bigr)_{-2r}
        =
        1+\left\lfloor\frac{r-1}{2}\right\rfloor
        =
        \left\lceil\frac r2\right\rceil.
    \end{align*}
    Moreover,
    \begin{align*}
        P_0^\perp
        =\operatorname{span}_\c\{b_0,\ldots,b_n\},
        \qquad
        (\ker q_0)_{[1:0]}
        =\operatorname{Hom}(P_0,P_0^\perp/P_0).
    \end{align*}
    Thus the horizontal part of this weight space is precisely
    the one-dimensional subspace for which only $t_{n-r}$ may
    be nonzero. Its maps send $b_{n-r}$ to a multiple of $b_n$
    and all other basis vectors to zero modulo $P_0$.
    There are no horizontal weight spaces with $r>n$.

    \smallskip
    \noindent
    Fix now $j\in\{1,\ldots,n\}$. We determine which tangent
    directions are added when passing from $\mathcal E_{j-1}$
    to $\mathcal E_j$.
    For $j\leq r\leq2j-1$, the weight-$-2r$ endomorphisms
    in $U_1\oplus U_3\oplus\cdots\oplus U_{2j-1}$ have the form
    $X^rp(H)$ with
    \begin{align*}
        \deg p\leq2j-1-r.
    \end{align*}
    This degree is strictly smaller than $|I_r|$. Indeed,
    \begin{align*}
        2j-1-r&\leq r-1<r=|I_r|
        &&\mbox{if }r\leq n,\\
        2j-1-r&\leq2n-1-r<2n-r+1=|I_r|
        &&\mbox{if }r>n.
    \end{align*}
    Evaluation is therefore injective on this polynomial space.
    The polynomials $p_{s,r}$ belonging to different odd indices
    $s$ have distinct degrees $s-r$, so they are linearly independent.
    In particular, $p_{2j-1,r}$ has larger degree than every polynomial
    contributed by the preceding modules. Adding $U_{2j-1}$ therefore
    adds exactly one tangent direction of each weight $-2r$ with
    $j\leq r\leq2j-1$.

    \smallskip
    \noindent
    For the remaining positive shifts, we use the following observation.
    For any $1\leq r\leq n$, the modules among
    $U_1,U_3,\ldots,U_{2r-1}$ that contribute to weight $-2r$
    are exactly those with odd index
    \begin{align*}
        r\leq s\leq2r-1.
    \end{align*}
    There are $\lceil r/2\rceil$ such indices. Their polynomials
    $p_{s,r}$ have distinct degrees at most $r-1$, so evaluation
    at the $r$ distinct points indexed by $I_r$ is injective.
    Their images are therefore $\lceil r/2\rceil$ independent
    vectors in a tangent weight space of that same dimension.
    Thus $\mathcal E_r$ contains the entire tangent weight
    space of weight $-2r$.
    In particular, when $r<j$, that weight space is already
    contained in $\mathcal E_{j-1}$.

    \smallskip
    \noindent
    Finally, an endomorphism of nonnegative $H$-weight preserves
    $P_0$ and therefore evaluates to zero.
    There are no weights $-2r$ with $r>2j-1$ in the first $j$
    modules. It follows that the fibre of
    $\mathcal E_j/\mathcal E_{j-1}$ at $[1:0]$ has precisely
    the following $j$ weights, each with multiplicity one:
    \begin{align}
        \label{Equation: right inverse quotient weights}
        -2(2j-1),-2(2j-2),\ldots,-2j.
    \end{align}
    A basis of this fibre is given by the classes
    \begin{align*}
        v_{j,a}:=
        \bigl[\operatorname{ev}_0(A_{2j-1,a})\bigr],
        \qquad 0\leq a\leq j-1.
    \end{align*}
    Since evaluation at $[1:0]$ is equivariant under its stabilizer,
    the identity $[Y,A_{s,a}]=A_{s,a+1}$ gives
    \begin{align*}
        Y\cdot v_{j,a}
        =
        \begin{cases}
            v_{j,a+1},&0\leq a<j-1,\\
            0,&a=j-1.
        \end{cases}
    \end{align*}
    For the last equality, the next endomorphism has weight
    $-2(j-1)$, whose tangent weight space already lies in
    $\mathcal E_{j-1}$ when $j\geq2$. When $j=1$, the next
    endomorphism is $A_{1,1}=H$, which preserves $P_0$ and
    evaluates to zero.
    
    \smallskip
    \noindent
    Since $\mathcal V_j=q_0(\mathcal E_j)$, its rank is the
    rank of $\mathcal E_j$ minus the dimension of its horizontal
    part. We therefore determine $\mathcal E_j\cap\ker q_0$.
    The preceding dimension argument shows that, for every
    $1\leq r\leq j$, $\mathcal E_j$ contains the horizontal
    line of weight $-2r$, since it already belongs to $\mathcal E_r$.
    Suppose instead that $j<r\leq\min\{n,2j-1\}$.
    Every weight-$-2r$ vector in $(\mathcal E_j)_{[1:0]}$
    is represented by $X^rp(H)$ with
    \begin{align*}
        \deg p\leq2j-1-r.
    \end{align*}
    If this vector is horizontal, its coefficients must vanish
    at all indices in $I_r$ except $i=n-r$. Thus $p$ vanishes
    at $r-1$ distinct points. Since
    \begin{align*}
        2j-1-r<r-1,
    \end{align*}
    this forces $p=0$. If $r>2j-1$, there is no contribution
    from these modules in the first place.
    As there are no horizontal weights with $r>n$, we conclude
    that
    \begin{align*}
        \mathcal K_j:=\mathcal E_j\cap\ker q_0
    \end{align*}
    has, at $[1:0]$, exactly the weights
    $-2,-4,\ldots,-2j$, each with multiplicity one.
    Equivariance gives $\operatorname{rank}\mathcal K_j=j$
    at every point.

    \smallskip
    \noindent
    The successive quotients of $\mathcal E_j$ have ranks
    $1,\ldots,j$, and $\mathcal V_j=q_0(\mathcal E_j)$.
    Therefore
    \begin{align}
        \label{Equation: right inverse filtration ranks}
        \operatorname{rank}\mathcal E_j
        &=\sum_{k=1}^j k=\frac{j(j+1)}2,\\
        \operatorname{rank}\mathcal V_j
        &=\operatorname{rank}\mathcal E_j
          -\operatorname{rank}\mathcal K_j
          =\frac{j(j-1)}2.
        \nonumber
    \end{align}
    For $j=n$, these are the ranks of $E_0$ and $V_0$,
    respectively. Hence the filtrations are
    \begin{align*}
        0=\mathcal E_0\subset\mathcal E_1\subset\cdots
        \subset\mathcal E_n=E_0,\qquad
        0=\mathcal V_0=\mathcal V_1\subset\cdots
        \subset\mathcal V_n=V_0.
    \end{align*}

    \smallskip
    \noindent
    Finally, $q_0$ induces a surjective holomorphic bundle map
    \begin{align*}
        \overline q_j:
        \mathcal E_j/\mathcal E_{j-1}
        \longrightarrow
        \mathcal V_j/\mathcal V_{j-1}.
    \end{align*}
    A class $[v]$ belongs to its kernel precisely when
    $q_0(v)\in\mathcal V_{j-1}$.
    Since $\mathcal V_{j-1}=q_0(\mathcal E_{j-1})$, we can
    then choose $v'\in\mathcal E_{j-1}$ with
    $q_0(v')=q_0(v)$. The difference $v-v'$ is horizontal
    and represents the same class as $v$. Consequently,
    \begin{align*}
        \ker\overline q_j
        =
        \frac{\mathcal E_{j-1}+\mathcal K_j}{\mathcal E_{j-1}}
        \cong
        \frac{\mathcal K_j}{\mathcal K_{j-1}}.
    \end{align*}
    At $[1:0]$, this quotient is the one-dimensional space
    of weight $-2j$. Thus the fibre of $\ker\overline q_j$
    at $[1:0]$ is precisely the final weight space in
    \eqref{Equation: right inverse quotient weights},
    spanned by $v_{j,j-1}$.

    \medskip
    \noindent
    \textbf{Step 2: identification of the quotient operators}.
    Denote by $J^mL$ the holomorphic jet bundle of order $m$ of a
    line bundle $L$ on $\cp^1$. Its local coordinates are derivatives,
    not factorial-normalized Taylor coefficients. These jet bundles
    are on $\cp^1$; below they are pulled back as bundles, not
    replaced by jet bundles on $\Sigma$.
    Fix $j\geq2$, put $s=2j-1$ and $m=j-1$, and let
    $\mathcal Q_j:=\mathcal E_j/\mathcal E_{j-1}$.
    The stabilizer of $[1:0]$ in $\operatorname{SL}(2,\c)$ consists
    of upper triangular matrices. Its diagonal subgroup is
    $\operatorname{diag}(t,t^{-1})$, and its unipotent subgroup is
    generated by $Y$. In the fibre of $\mathcal Q_j$ at $[1:0]$,
    the span of all but the first weight is invariant under this
    stabilizer: $Y$ moves each vector to the next. Its quotient
    therefore defines a homogeneous line bundle $L_j$, meaning a
    line bundle with the induced $\operatorname{SL}(2,\c)$ action.
    The stabilizer acts on its fibre by $t^{-2s}$.
    Hence $L_j\cong\mathcal O_{\cp^1}(2s)$.
    Indeed, the character $t$ is that of $\mathcal O_{\cp^1}(-1)$.
    For $A\in U_s$, write $\lambda_A$ for the image of $\sigma_A$
    in $H^0(L_j)$. Choose the frame of $L_j$ on the affine
    chart obtained by transporting the image of $v_{j,0}$ at
    $[1:0]$ by $e^{wX}$. In this frame,
    \eqref{Equation: right inverse representation identities} gives
    \begin{align}
        \label{Equation: right inverse scalar fundamental sections}
        \lambda_{A_{s,a}}(w)
        =(-1)^a\frac{(2s)!}{(2s-a)!}w^a,
        \qquad 0\leq a\leq2s.
    \end{align}
    This follows by expanding $e^{-w\operatorname{ad}X}A_{s,a}$
    and reading its $A_{s,0}$ coefficient.
    At $w=0$, the maps $U_s\to(\mathcal Q_j)_0$ and
    $U_s\to(J^mL_j)_0$, given respectively by evaluation and
    $A\mapsto j^m\lambda_A(0)$, have the same kernel,
    $\operatorname{span}\{A_{s,a}:a\geq m+1\}$.
    At $[1:0]$, both maps are equivariant under the stabilizer and
    surjective. Their common kernel therefore identifies the two
    quotient representations of the stabilizer. Transporting this
    identification by $\operatorname{SL}(2,\c)$ gives a well-defined
    holomorphic bundle isomorphism $\mathcal Q_j\cong J^mL_j$.
    The final weight is the kernel of
    truncation to $J^{m-1}L_j$, so, compatibly with $q_0$,
    \begin{align}
        \label{Equation: right inverse graded jet bundles}
        \mathcal E_j/\mathcal E_{j-1}
        &\cong J^{j-1}\mathcal O_{\cp^1}(4j-2),\\
        \mathcal V_j/\mathcal V_{j-1}
        &\cong J^{j-2}\mathcal O_{\cp^1}(4j-2).
        \nonumber
    \end{align}
    Similarly, $\mathcal E_1\cong\mathcal O_{\cp^1}(2)$.
    The quotient $q$ is defined by the invariant bilinear form,
    so the complex orthogonal action preserves $\ker q$.
    Thus $e^{tA}\circ\tilde\varphi_n$ is horizontal for every $t$, and
    differentiating gives $L_{\tilde\varphi_n}\sigma_A=0$.
    The Leibniz rule for $L_{\tilde\varphi_n}$ is
    \begin{align*}
        L_{\tilde\varphi_n}(h\xi)=hL_{\tilde\varphi_n}\xi+\partial h\otimes q_0(\xi).
    \end{align*}
    Since the fundamental sections locally generate $\mathcal E_j$,
    the operator maps its holomorphic sections into
    $\omega_{\cp^1}\otimes\mathcal V_j$.
    On the $j$-th quotient, multiplication by a holomorphic
    scalar $h$ therefore produces the extra term
    $\partial h\otimes\operatorname{tr}_{m-1}(u)$, where
    $\operatorname{tr}_{m-1}:J^mL_j\to J^{m-1}L_j$ drops the last
    derivative. This is what it means for the symbol to be jet
    truncation. A general local section $(u_0,\ldots,u_m)$ of
    $J^mL_j$ need not satisfy $u_a=\partial_w^a u_0$.
    A section of the form $j^m\lambda=(\lambda,\lambda',\ldots,
    \lambda^{(m)})$ is called holonomic.
    In derivative coordinates the first-order operator
    \begin{align*}
        (u_0,\ldots,u_m)\mapsto
        (du_0-u_1\,dw,\ldots,du_{m-1}-u_m\,dw)
    \end{align*}
    has the same symbol and kills every holonomic jet $j^m\lambda_A$.
    This formula is intrinsic: the product and chain rules for a
    change of line-bundle frame and base coordinate give exactly
    the transition rule of $\omega_{\cp^1}\otimes J^{m-1}L_j$.
    The difference from the induced operator is therefore a
    zeroth-order bundle map. It vanishes on the jets of the sections
    in \eqref{Equation: right inverse scalar fundamental sections},
    which span every fibre, so the difference is zero.
    Now let $f:\Sigma\to\cp^1$ and $\psi=\tilde\varphi_n\circ f$ be as in the statement. The same argument applies along $\psi$.
    The sections $f^*\sigma_A$ locally generate $f^*\mathcal E_j$
    and satisfy $L_\psi(f^*\sigma_A)=0$, because they are the
    infinitesimal deformations $e^{tA}\circ\psi$. The Leibniz rule
    shows that $L_\psi$ maps local holomorphic sections of
    $f^*\mathcal E_j$ into
    $\omega_\Sigma\otimes f^*\mathcal V_j$.
    Its induced quotient operator has symbol the pulled-back
    truncation and kills $f^*j^m\lambda_A$.
    These sections span every fibre, including above branch
    points. Comparing the symbol and these sections identifies the
    quotient operator as
    \begin{align}
        \label{Equation: right inverse quotient operator}
        D_{m,f}(u_0,\ldots,u_m)
        =\bigl(du_a-u_{a+1}\,df\bigr)_{a=0}^{m-1},
    \end{align}
    from $f^*J^m\mathcal O_{\cp^1}(4m+2)$ to
    $\omega_\Sigma\otimes f^*J^{m-1}\mathcal O_{\cp^1}(4m+2)$.
    In this formula $df$ is the differential of a local base
    coordinate composed with $f$. The formula agrees on overlaps
    by the same chain rule, including where $df=0$.
\end{proof}

\begin{lemma}[Polynomially bounded holomorphic right inverse]
\label{Lemma: polynomial holomorphic right inverse}
    Fix $n\geq2$. There exist $C>0$, $N\in\n$, and $\ell_0\in\n$, depending only on $n$, the fixed geometry, and the fixed constants in the definition of balancedness, with the following property. Let $f:\Sigma\rightarrow\cp^1$ be a balanced holomorphic cover of degree $\ell\geq\ell_0$, and set $\psi:=\tilde\varphi_n\circ f$. Then there exists a complex-linear map
    \begin{align*}
        S_\psi:H^0(\omega_\Sigma\otimes\psi^*V\mathscr Z_n)\rightarrow H^0(\psi^*T^{1,0}\mathscr Z_n)
    \end{align*}
    satisfying
    \begin{align}\label{Equation: polynomial holomorphic right inverse}
        L_\psi S_\psi&=\operatorname{Id},\\
        \|S_\psi\beta\|_{L^2(\Sigma)}
        &\leq C\ell^N\|\beta\|_{L^2(\Sigma)}
        \qquad\forall\,\beta\in
        H^0(\omega_\Sigma\otimes\psi^*V\mathscr Z_n).
        \nonumber
    \end{align}
\end{lemma}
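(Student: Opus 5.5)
We will build $S_\psi$ by induction along the pulled-back filtration of Lemma~\ref{Lemma: filtration of linearized horizontality}. Since $\mathcal V_1=0$ and $q_0(\mathcal E_j)=\mathcal V_j$, it suffices to construct, for each $2\le j\le n$ with $m=j-1$, a polynomially bounded complex-linear right inverse $T_m$ for the quotient operator
\begin{align*}
    D_{m,f}:H^0\bigl(f^*J^m\mathcal O_{\cp^1}(4m+2)\bigr)\rightarrow H^0\bigl(\omega_\Sigma\otimes f^*J^{m-1}\mathcal O_{\cp^1}(4m+2)\bigr).
\end{align*}
Granting these, we proceed from the top of the filtration downwards. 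Given $\beta$, project it to the top quotient $f^*(\mathcal V_n/\mathcal V_{n-1})$ and solve there with $T_{n-1}$. Lift the solution to a holomorphic section of $f^*\mathcal E_n$ using a fixed holomorphic splitting on $\cp^1$: the bundles are homogeneous, so the relevant extension classes are pulled back from $\cp^1$, and we will choose $C^\infty$ splittings with bounded pulled-back norms, correcting by $\bar\partial$ with Lemma~\ref{Lemma: del bar solution with weighted norm}. The remaining error then lies in $\omega_\Sigma\otimes f^*\mathcal V_{n-1}$, and we recurse. Each lift costs at most a polynomial in $\ell$, because all bundles are $f^*$ of fixed bundles with Fubini--Study metrics, and by Remark~\ref{Remark: balanced pullback bounded geometry} the $\bar\partial$-solution operators have norm $O(\ell^{-1/2})$ on the summands, whose degrees are at least $(n+1)\ell$. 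There are $n$ steps, so the composite bound remains polynomial.

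For $T_m$, the equations $du_a-u_{a+1}\,df=\beta_a$ for $a=0,\ldots,m-1$ are solved recursively by choosing $u_0$ freely and setting $u_{a+1}=(du_a-\beta_a)/df$. This holds away from the branch locus $\mathcal B_f$. Condition~(iii) of Definition~\ref{Definition: m-balanced cover} shows that every branch point is simple: at each $p\in\mathcal B_f$, $df$ has a simple zero with $|\nabla^2 f(p)|\gtrsim \ell^{(1-n_0)/2}$. The requirement that $u_1,\ldots,u_m$ be holomorphic at $p$ therefore reduces to finitely many linear conditions, roughly $m(m+1)/2$ of them, on the $m$-jet of the scalar section $u_0$ at $p$, which are determined by the jets of the $\beta_a$. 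We will proceed as follows.
\begin{enumerate}[(a)]
    \item Construct a holomorphic $u_0\in H^0(f^*\mathcal O(4m+2))$ with these prescribed jets at all branch points simultaneously. To do this, sum the peak sections of Proposition~\ref{Proposition: holomorphic peak sections} (with $m$ there equal to $0$, using the averaged positivity from balancedness), cut off, and apply Lemma~\ref{Lemma: del bar solution with weighted norm} with the singular weight at every branch point so that the jets are preserved.
    \item Define the $u_{a+1}$ by the recursion. They are meromorphic with removable singularities, hence holomorphic.
    \item Bound the $L^2$ norms by Cauchy estimates at scale $\ell^{-1/2}$, together with the polynomial lower bound on $df$ away from a $\ell^{-N'}$-neighbourhood of $\mathcal B_f$ and near it on the second derivative.
\end{enumerate}

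The main obstacle is step (a). The branch points number about $\ell$, and they may cluster, while the jet conditions divide by $\nabla^2 f(p)$, which is only polynomially bounded below. We first try the following. Condition~(iii) together with (i) forces distinct branch points to be at distance at least $c\,\ell^{-N''}$. On that basis we prescribe the jets with peaks at a very fine scale, or equivalently twist by the divisor $\sum_p(m+1)p$. This twist has degree $O(\ell)$, which is smaller than $(4m+2)\ell$ once we check $\deg\mathcal B_f=2\ell+2g-2$. The divisor-twisted bundle still satisfies the averaged positivity hypotheses with a uniform coercivity constant, so we use a single global weighted $\bar\partial$-solve, with weight $\prod_p w_p^{-(m+1)}$ in Lemma~\ref{Lemma: del bar solution with weighted norm}. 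This yields all jet conditions at once with a loss that is polynomial in the minimal separation. Linearity in $\beta$ holds throughout if each $\bar\partial$-correction is taken to be the least-norm solution.
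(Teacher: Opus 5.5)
Your architecture matches the paper's: induction along the pulled-back filtration with holomorphic lifting, the recursion $u_{a+1}=(u_a'-b_a)/f'$, simple branch points from balancedness, exact jet conditions on the scalar $u_0$, and one global $\bar\partial$-correction in a branch-divisor twist. However, the jet count on which your twist is based is wrong. At a simple branch point each step imposes exactly one condition, $(u_a'-b_a)(p)=0$, so there are $m$ conditions in total. These conditions live on the $(2m-1)$-jet of $u_0$, not on its $m$-jet. In the normal form $f=z^2$, $u_0=\sum c_kz^k$, the $a$-th condition is triangular in $c_{2a-1}$; for $m=2$ the conditions are $c_1=b_0(0)$ and $\tfrac12\bigl(3c_3-\tfrac12 b_0''(0)\bigr)=b_1(0)$. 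Your twist by $\sum_p(m+1)p$ gives a correction vanishing only to order $m+1$, so it changes $c_{m+1},\dots,c_{2m-1}$. For $m\geq2$, i.e.\ $n\geq3$, the corrected $u_m$ therefore has poles. The correction must vanish to order $2m$, so you must twist by $2mR_f$.

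With the right multiplicity the remaining positivity is only $(4m+2)\ell-2m(2\ell+2g-2)=2\ell+O(1)$. In any case a degree count does not give the hypotheses of Lemma~\ref{Lemma: del bar solution with weighted norm}. You need a metric on the twisted bundle with $\Lambda_{\omega_0}(iF)\geq b_d-C_0$ pointwise and averaged positivity, uniformly in $\ell$. Your weight $\prod_pw_p^{-(m+1)}$ does not provide this. That lemma handles one fixed twist whose curvature is bounded independently of $\ell$. A product of $\sim2\ell$ local divisor weights around possibly $\ell^{-N''}$-close branch points has negative curvature that is not uniformly bounded below.

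The missing idea is the following. The section $\tau:=df\in H^0(\omega_\Sigma\otimes f^*\mathcal O_{\cp^1}(2))$ vanishes exactly and simply on $R_f$. Multiplication by $\tau^{2m}$ therefore identifies $f^*\mathcal O_{\cp^1}(4m+2)(-2mR_f)$ with $f^*\mathcal O_{\cp^1}(2)\otimes\omega_\Sigma^{-2m}$. With the pullback Fubini--Study metric, $\frac{i}{2\pi}F$ equals $2\kappa_{\mathrm{FS}}|df|_{g_0}^2\omega_0$ up to a bounded term, so balancedness gives a right inverse of $\bar\partial$ of norm $O(\ell^{-1/2})$. One then solves $\bar\partial v=\tau^{-2m}\bar\partial s_{\mathrm{app}}$ and sets $s=s_{\mathrm{app}}-\tau^{2m}v$. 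Here $s_{\mathrm{app}}=\sum_p\chi_pP_p$ is built from local polynomials cut off in disjoint disks of radius $\sim\ell^{-M}$. The polynomial loss comes from $|df|^{-2m}$ on the cutoff annuli.

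Peak sections from Proposition~\ref{Proposition: holomorphic peak sections} are not available here anyway: as stated, it requires $\rho(x)\geq c$ at the centre, and this fails at branch points.

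Finally, in the lifting step the $\bar\partial$-correction must be made inside $f^*\mathcal E_{j-1}$. A correction in $E$ changes the quotient component by a holomorphic section and spoils $\rho_j(\beta-L_\psi\eta)=0$. So you need positivity of each $\mathcal E_{j-1}$ on $\cp^1$, which does not follow from $a_i\geq n+1$ for $E_0$. The paper obtains it from the jet filtrations, which give all summands of degree at least $2$.
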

\begin{proof}
    Let $E_0,V_0$ and the filtrations $\mathcal E_j,\mathcal V_j$
    be those of Lemma~\ref{Lemma: filtration of linearized horizontality},
    and set
    \begin{align*}
        E:=f^*E_0,
        \qquad
        F:=\omega_\Sigma\otimes f^*V_0.
    \end{align*}
    By that lemma, $L_\psi$ preserves the pulled-back filtrations,
    with quotient operators $D_{m,f}$ given by
    \eqref{Equation: right inverse quotient operator}.
    We first solve these quotient equations with polynomial norm
    bounds, and then lift the solutions through the filtrations.
    Throughout the proof, constants and exponents may increase,
    but depend only on the data listed in the statement. We retain
    $n_0$ for the fixed exponent in
    Definition~\ref{Definition: m-balanced cover}(iii).
    Only finitely many derivative estimates are used, with orders
    bounded in terms of $n$. Choose fixed smooth Hermitian metrics
    on the bundles and subquotients over $\cp^1$ in the preceding
    lemma and below. Comparisons between these metrics remain
    uniform after pullback by $f$.

    \medskip
    \noindent
    \textbf{Step 1: quantitative local control of the cover}.
    The upper bound for $df$ implies that a disk of radius
    $c\ell^{-\frac12}$ on $\Sigma$ maps into a fixed-size target
    coordinate disk, for a sufficiently small fixed $c>0$.
    Using uniformly controlled source coordinates and target
    coordinates centered by Fubini--Study isometries, the Cauchy
    estimates give
    \begin{align}
        \label{Equation: right inverse Cauchy estimates}
        |\partial_z^kf|&\leq C_k\ell^{k/2}\qquad(k\geq1),\\
        |\partial_z^ku|&\leq C_k\ell^{(k+1)/2}\|u\|_{L^2(\Sigma)}
        \qquad(k\geq0)
        \nonumber
    \end{align}
    at the center of the disk. In the second estimate $u$ denotes
    the coefficient vector of a holomorphic section of a fixed
    pullback bundle, possibly tensored with a fixed power of
    $\omega_\Sigma$. Its metric is uniformly equivalent to the
    coefficient norm on these disks. All fixed-order covariant
    estimates obtained from these inequalities also have polynomial
    constants. Squared local $L^2$ Cauchy estimates, summed over a
    bounded-overlap natural-scale cover, give in particular
    \begin{align}
        \label{Equation: right inverse holomorphic derivative bound}
        \|L_\psi u\|_{L^2(\Sigma)}
        \leq C\ell^{\frac12}\|u\|_{L^2(\Sigma)}
        \qquad\forall\,u\in H^0(E).
    \end{align}
    Here the first-order coefficients are fixed base coefficients,
    while the zeroth-order coefficients contain one factor of $df$.
    At a branch point $p$, condition~(iii) in
    Definition~\ref{Definition: m-balanced cover} gives
    \begin{align*}
        |\nabla^2f(p)|_{g_0}\geq c\ell^{(1-n_0)/2}>0.
    \end{align*}
    Since $df(p)=0$, the connection terms in the coordinate
    expression for $\nabla^2f(p)$ vanish. Thus $f''(p)\neq0$
    in holomorphic coordinates, so every zero of $df$ is simple.
    Let $R_f$ be the reduced branch divisor, and write
    $g$ for the genus of $\Sigma$.
    Since $df$ is a holomorphic section of
    $\omega_\Sigma\otimes f^*T^{1,0}\cp^1$,
    \begin{align}
        \label{Equation: right inverse branch identity}
        \mathcal O(R_f)\cong\omega_\Sigma\otimes f^*\mathcal O_{\cp^1}(2),
        \qquad \deg R_f=2\ell+2g-2.
    \end{align}
    We require polynomial lower bounds for $df$ outside small branch
    disks. In a normalized source coordinate $z_p$ centered at $p$,
    write $D_p(t):=\{|z_p|<t\}$.
    There exist fixed $B,M>0$ and $\delta=c_0\ell^{-M}$ such
    that coordinate disks $D_p(4\delta)$ centered at the branch
    points are pairwise disjoint and
    \begin{align}
        \label{Equation: right inverse away from branch}
        |df|_{g_0}\geq c\ell^{-B}\delta
        \quad\mbox{on }\Sigma\smallsetminus
        \bigcup_{p\in\operatorname{spt}(R_f)}D_p(\delta).
    \end{align}
    We justify this assertion. Choose $B$ sufficiently large in terms
    of $n_0$, for instance $B=n_0+1$. At a branch point,
    normalized coordinates and condition~(iii) of balancedness give
    $|f''(0)|\geq c\ell^{-B}$. On natural-scale coordinate disks,
    \eqref{Equation: right inverse Cauchy estimates} gives
    $|f'''|\leq C\ell^{3/2}$. Choose $M>B+3$ and then $c_0>0$
    sufficiently small. Taylor's formula for $a=f'$ gives
    \begin{align*}
        |a(z)-a'(0)z|\leq C\ell^{3/2}|z|^2,
        \qquad
        |a(z)|\geq\tfrac12|a'(0)||z|
    \end{align*}
    on disks of radius any prescribed fixed multiple of $\delta$.
    In particular, no second zero lies in those disks. Uniform
    comparison of the coordinates with $g_0$, and an adjustment of
    $c_0$, give the stated pairwise disjointness.
    To prove \eqref{Equation: right inverse away from branch},
    suppose a point $x$ outside the smaller disks has
    $|df(x)|<\varepsilon\ell^{-B}\delta$, where $\varepsilon>0$
    will be fixed sufficiently small. Choose normalized coordinates
    centered at $x$ and $f(x)$ and write $a=f'$.
    Condition~(iii) of balancedness implies
    $|a'(0)|\geq c\ell^{-B}$; the connection terms relating
    the covariant and coordinate second derivatives contain $df$
    and are absorbed at this threshold. Also $a(0)\neq0$.
    On the circle of radius $r=2|a(0)|/|a'(0)|$,
    \begin{align*}
        |a(0)+a'(0)z|&\geq|a(0)|,\\
        |a(z)-a(0)-a'(0)z|
        &\leq C\ell^{3/2}r^2
        <|a(0)|.
    \end{align*}
    The strict inequality holds for large $\ell$, since
    $\ell^{B+3/2}\delta\to0$. Rouch\'e's theorem gives a zero
    within distance $C\varepsilon\delta$ of $x$. Uniform coordinate
    comparisons then place $x$ inside the smaller coordinate disk
    of this zero, a contradiction if $\varepsilon$ is sufficiently
    small. This proves \eqref{Equation: right inverse away from branch}.
    In particular, the local reciprocal $1/f'$ and each fixed-order
    derivative of $1/f'$ are bounded by a power of $\ell$ on this
    complement: differentiation produces products of derivatives
    of $f$ divided by powers of $f'$.

    \medskip
    \noindent
    \textbf{Step 2: a quantitative $\bar\partial$-correction}.
    For fixed integers $a>0$ and $b$, set
    \begin{align*}
        B_{a,b}(f):=f^*\mathcal O_{\cp^1}(a)\otimes\omega_\Sigma^b,
    \end{align*}
    equipped with the product of the pullback Fubini--Study metric
    and the metric induced by $g_0$ on $\omega_\Sigma^b$.
    For the Fubini--Study metric $h_a$ on $\mathcal O_{\cp^1}(a)$,
    there is a fixed $\kappa_{\mathrm{FS}}>0$ such that
    \begin{align*}
        \frac{i}{2\pi}F_{f^*h_a}
        =a\kappa_{\mathrm{FS}}|df|_{g_0}^2\omega_0.
    \end{align*}
    Apply the spectral-gap argument in Step~1 of the proof of
    Proposition~\ref{Proposition: holomorphic peak sections}, taking
    \begin{align*}
        m=0,\qquad d=a\ell,\qquad r=\sqrt a,
        \qquad \rho:=\kappa_{\mathrm{FS}}\ell^{-1}|df|_{g_0}^2.
    \end{align*}
    Conditions~(i)--(ii) in
    Definition~\ref{Definition: m-balanced cover} give
    \begin{align*}
        0\leq\rho&\leq C,\\
        |d\rho|_{g_0}
        &\leq C\ell^{-1}|df|_{g_0}|\nabla^2f|_{g_0}
        \leq C\ell^{1/2}\leq C_a\sqrt d,\\
        \fint_{B_{r d^{-1/2}}(x)}\rho\,d\vol_{g_0}
        &=\fint_{B_{\ell^{-1/2}}(x)}\rho\,d\vol_{g_0}\geq c.
    \end{align*}
    Thus the averaging radius is exactly the one in balancedness.
    The spectral-gap argument remains valid for the present $g_0$:
    a fixed lower bound $K_{g_0}\geq-C_0$ contributes only a bounded
    zeroth-order term, absorbed for sufficiently large $\ell$.
    The fixed-twist
    estimate \eqref{Equation: fixed twist adjoint coercivity},
    proved in Step 1 of the proof of
    Lemma~\ref{Lemma: del bar solution with weighted norm},
    then gives, for all sufficiently large $\ell$,
    \begin{align*}
        \|\bar\partial^*\gamma\|_{L^2}^2
        \geq c_{a,b}\ell\|\gamma\|_{L^2}^2
        \qquad\forall\,\gamma\in\Omega^{0,1}(B_{a,b}(f)).
    \end{align*}
    Since $\Sigma$ is a complex curve, the elliptic operator
    $\bar\partial\bar\partial^*$ is therefore invertible, and
    \begin{align*}
        T_{a,b,f}:=
        \bar\partial^*(\bar\partial\bar\partial^*)^{-1}
    \end{align*}
    is a complex-linear right inverse satisfying
    \begin{align}\label{Equation: right inverse delbar solution estimate}
        \|T_{a,b,f}\alpha\|_{L^2}
        \leq C_{a,b}\ell^{-\frac12}\|\alpha\|_{L^2}.
    \end{align}
    Smooth data give smooth solutions by elliptic regularity.
    A fixed holomorphic splitting gives the same conclusion on
    $f^*B$ whenever $B\to\cp^1$ splits into line bundles of
    positive degree; comparisons with other fixed smooth bundle
    metrics remain uniform after pullback.

    \medskip
    \noindent
    \textbf{Step 3: exact cancellation of poles in a quotient equation}.
    Fix $1\leq m\leq n-1$ and set
    $L_{m+1}:=\mathcal O_{\cp^1}(4m+2)$.
    Let
    \begin{align*}
        \beta\in H^0\bigl(\omega_\Sigma\otimes f^*J^{m-1}L_{m+1}\bigr).
    \end{align*}
    Choose local holomorphic coordinates $z$ on $\Sigma$ and
    $w$ on $\cp^1$, with $f$ mapping the source chart into the
    target chart. Use the derivative coordinates on the jet bundles
    determined by $w$ and a local holomorphic frame of $L_{m+1}$,
    and write the coefficients of $\beta$ as
    $(b_0,\ldots,b_{m-1})\,dz$. In the following formulas,
    $u_a':=\partial_z u_a$ and $f':=\partial_z(w\circ f)$.
    For a prescribed global scalar
    section $u_0=s\in H^0(f^*L_{m+1})$, the only possible solution of
    $D_{m,f}u=\beta$ away from $R_f$ is
    \begin{align}
        \label{Equation: right inverse scalar recursion}
        u_{a+1}=\frac{u_a'-b_a}{f'},
        \qquad 0\leq a<m.
    \end{align}
    These local solutions agree on overlaps: the operator is intrinsic,
    and a solution with prescribed scalar component $u_0$ is unique
    wherever $df\neq0$. Thus they give a global meromorphic section
    of $f^*J^mL_{m+1}$.
    At a simple branch point, the holomorphic local normal
    form provides source and target coordinates in which $f(z)=z^2$.
    We use this normal form only to count the jet conditions;
    all quantitative estimates below use the uniformly controlled
    coordinates of Step~1. Writing $s=\sum_{k\geq0}c_kz^k$, the first
    quotient is holomorphic if and only if $s'(0)=b_0(0)$.
    Once $u_1,\ldots,u_{a-1}$ are holomorphic, the next condition is
    $u_{a-1}'(0)=b_{a-1}(0)$. The coefficient of $c_{2a-1}$ in
    $u_{a-1}'(0)$ is
    \begin{align}
        \label{Equation: right inverse branch triangular coefficient}
        \frac{(2a-1)(2a-3)\cdots3}{2^{a-1}}\neq0,
        \qquad 1\leq a\leq m,
    \end{align}
    Here the empty product equals one. Indeed, applying
    $(2z)^{-1}\partial_z$ exactly $a-1$ times to
    $c_{2a-1}z^{2a-1}$ leaves
    $c_{2a-1}(2a-1)(2a-3)\cdots3\,z/2^{a-1}$.
    Scalar coefficients of degree greater than $2a-1$ leave terms
    of order at least two and do not contribute to the derivative
    at zero. Consequently these are
    $m$ triangular linear conditions on the $(2m-1)$-jet of $s$.
    One may set its even coefficients to zero and solve successively
    for its odd coefficients. All dependence on $\beta$ is linear.
    For quantitative purposes use the normalized source coordinates
    from Step 1, without rescaling them to make $f=z^2$.
    If $f'(z)=a_1z+O(z^2)$, the denominator $2^{a-1}$ in
    \eqref{Equation: right inverse branch triangular coefficient}
    is replaced by $a_1^{a-1}$. The other coefficients involve only
    finitely many derivatives of $f$ and $\beta$, and powers of
    $a_1^{-1}$. This follows by expanding the recurrence
    \eqref{Equation: right inverse scalar recursion} in Taylor series.
    The lower bound for $|a_1|$ and
    \eqref{Equation: right inverse Cauchy estimates} therefore show
    that every prescribed scalar coefficient is bounded by
    $C\ell^P\|\beta\|_{L^2}$ for a fixed exponent $P$.
    Matching these jets removes the poles exactly. At each stage,
    the numerator is holomorphic and vanishes at the branch point,
    while $f'$ has a simple zero. Its quotient is therefore
    holomorphic. Induction proves this for $u_1,\ldots,u_m$.

    \medskip
    \noindent
    \textbf{Step 4: a polynomially bounded solution of each quotient}.
    Represent the jets just obtained by polynomials $P_p$ of degree
    at most $2m-1$ in the local pulled-back frames of $f^*L_{m+1}$.
    Choose cutoffs $\chi_p$ supported in $D_p(2\delta)$ and equal
    to one on $D_p(\delta)$, with
    $|\nabla^k\chi_p|\leq C_k\delta^{-k}$. Their supports are
    disjoint by Step 1. Extending each term by zero, define
    \begin{align*}
        s_{\mathrm{app}}:=\sum_{p\in\operatorname{spt}(R_f)}\chi_pP_p.
    \end{align*}
    This is a global smooth section of $f^*L_{m+1}$, holomorphic near
    every branch point and realizing all prescribed jets.
    The number of terms is $2\ell+2g-2$, and every cutoff derivative
    is bounded by a fixed power of $\ell$. The coefficient estimates
    from Step 3 imply
    \begin{align}
        \label{Equation: right inverse approximate potential bound}
        \|s_{\mathrm{app}}\|_{L^2}
        +\|\bar\partial s_{\mathrm{app}}\|_{L^2}
        \leq C\ell^P\|\beta\|_{L^2},
    \end{align}
    after increasing $P$.
    Fix a holomorphic identification $T^{1,0}\cp^1\cong\mathcal O_{\cp^1}(2)$
    and write
    \begin{align*}
        \tau:=df\in H^0\bigl(\omega_\Sigma\otimes f^*\mathcal O_{\cp^1}(2)\bigr).
    \end{align*}
    The correction must preserve the prescribed jets through order
    $2m-1$, so it must vanish to order at least $2m$ at each point
    of $R_f$. Since $\tau$ has precisely these simple zeros,
    multiplication by $\tau^{2m}$ realizes the natural inclusion
    of the divisor twist, and identifies that twist as
    \begin{align}
        \label{Equation: right inverse residual positive bundle}
        f^*L_{m+1}(-2mR_f)
        &\cong f^*L_{m+1}\otimes
        \bigl(\omega_\Sigma\otimes f^*\mathcal O_{\cp^1}(2)\bigr)^{-2m}\\
        &\cong f^*\mathcal O_{\cp^1}(2)\otimes\omega_\Sigma^{-2m}
        =:\mathcal L_{m,f}.
        \nonumber
    \end{align}
    This is the positivity needed for interpolation: imposing the
    jets removes the factor $f^*\mathcal O_{\cp^1}(4m)$, but leaves
    the positive factor $f^*\mathcal O_{\cp^1}(2)$ and only a fixed
    twist on $\Sigma$.
    The form
    \begin{align*}
        \alpha:=\tau^{-2m}\bar\partial s_{\mathrm{app}}
    \end{align*}
    extends by zero near $R_f$ to a smooth
    $\mathcal L_{m,f}$-valued $(0,1)$-form.
    Its support is contained in the cutoff annuli, where
    \eqref{Equation: right inverse away from branch} applies.
    Hence
    \begin{align*}
        \|\alpha\|_{L^2}\leq C\ell^P\|\beta\|_{L^2}
    \end{align*}
    for a possibly larger fixed $P$. Apply Step 2 with $a=2$ and
    $b=-2m$ to solve $\bar\partial v=\alpha$ linearly with
    $\|v\|_{L^2}\leq C\ell^{-\frac12}\|\alpha\|_{L^2}$, and set
    \begin{align}
        \label{Equation: right inverse corrected scalar potential}
        s:=s_{\mathrm{app}}-\tau^{2m}v.
    \end{align}
    Then $s$ is holomorphic. Near each branch point, $v$ is
    holomorphic, and $\tau^{2m}$ vanishes to order $2m$.
    Thus the prescribed jets through order $2m-1$ are unchanged.
    Since $\|\tau\|_{L^\infty}\leq C\ell^{\frac12}$, we also have
    \begin{align}
        \label{Equation: right inverse scalar potential bound}
        \|s\|_{L^2}\leq C\ell^P\|\beta\|_{L^2}.
    \end{align}
    Cauchy estimates now bound all required derivatives of the
    holomorphic $s$ by a further fixed power of $\ell$.
    Apply \eqref{Equation: right inverse scalar recursion} to this
    scalar potential. The resulting meromorphic section has no poles
    by Step 3, hence is a global holomorphic section $u$ of $f^*J^mL_{m+1}$.
    Outside the disks $D_p(\delta)$, the recurrence, the Cauchy
    estimates, and the polynomial lower bound for $df$ give
    polynomial pointwise bounds for all its components.
    Inside a disk, use the holomorphic pulled-back jet frame defined
    on $D_p(2\delta)$. The components of $u$ are holomorphic and
    their values on $\partial D_p(\delta)$ satisfy the same bounds.
    The maximum modulus principle extends these bounds across the disk.
    Metrics of these fixed base frames are uniformly comparable,
    so
    \begin{align}
        \label{Equation: right inverse quotient solution bound}
        D_{m,f}u=\beta,
        \qquad\|u\|_{L^2}\leq C_m\ell^{N_m}\|\beta\|_{L^2}.
    \end{align}
    For fixed $f$, all coordinates, frames, and cutoffs have been
    chosen independently of $\beta$. The branch-jet system, the
    $\bar\partial$-solution, and the recurrence are complex linear
    in $\beta$. We have therefore constructed a complex-linear
    right inverse $R_{m,f}$ satisfying
    \eqref{Equation: right inverse quotient solution bound}.

    \medskip
    \noindent
    \textbf{Step 5: holomorphic lifting and the full right inverse}.
    Each fixed bundle $\mathcal E_j$ splits on $\cp^1$ into line
    bundles of degree at least two. To verify this, the standard jet
    filtration of $J^{j-1}\mathcal O_{\cp^1}(4j-2)$ has line
    quotients
    \begin{align*}
        \mathcal O_{\cp^1}(4j-2-2a),
        \qquad 0\leq a\leq j-1,
    \end{align*}
    of degrees at least $2j$.
    Refining the filtration of $\mathcal E_j$ by these jet filtrations
    and twisting by $\mathcal O_{\cp^1}(-2)$ gives nonnegative-degree
    line quotients. Induction on the resulting exact sequences shows
    that $H^1$ vanishes and all fibres are generated by global sections:
    quotient sections lift because the preceding $H^1$ vanishes.
    Birkhoff--Grothendieck therefore implies that every summand of
    $\mathcal E_j$ has degree at least two.
    Put
    \begin{align*}
        E^{(j)}:=f^*\mathcal E_j,
        \qquad F^{(j)}:=\omega_\Sigma\otimes f^*\mathcal V_j.
    \end{align*}
    For $j\geq2$, choose a fixed smooth right inverse of the base
    quotient $\mathcal E_j\rightarrow\mathcal E_j/\mathcal E_{j-1}$.
    Pulling it back lifts a holomorphic quotient section $u$ to a
    smooth section $\widetilde u$ of $E^{(j)}$ with
    \begin{align*}
        \|\widetilde u\|_{L^2}&\leq C\|u\|_{L^2},\\
        \bar\partial\widetilde u&\in\Omega^{0,1}(E^{(j-1)}),
        \qquad
        \|\bar\partial\widetilde u\|_{L^2}
        \leq C\ell^{\frac12}\|u\|_{L^2}.
    \end{align*}
    The derivative estimate follows from the chain rule for the
    fixed smooth splitting and $|df|\leq C\ell^{\frac12}$; there is
    no derivative of $u$ because it is holomorphic.
    By the positivity just proved and Step 2, the error has a
    solution in $E^{(j-1)}$ of norm at most $C\|u\|_{L^2}$.
    Subtraction gives a complex-linear holomorphic lifting operator
    \begin{align}
        \label{Equation: right inverse holomorphic lifting}
        H_{j,f}:H^0(E^{(j)}/E^{(j-1)})\rightarrow H^0(E^{(j)}),
        \qquad \|H_{j,f}u\|_{L^2}\leq C\|u\|_{L^2}.
    \end{align}
    We now solve the full equation by induction on the filtration.
    A solution of the $j$-th quotient equation leaves an error in
    $F^{(j-1)}$, which can be removed using the preceding stage.
    This does not require a holomorphic splitting of either
    filtration. For $j=1$, the target $F^{(1)}$ is zero, so take
    $S^{(1)}=0$.
    Suppose $S^{(j-1)}:H^0(F^{(j-1)})\rightarrow H^0(E^{(j-1)})$
    is a polynomially bounded right inverse.
    Let $\rho_j:F^{(j)}\to F^{(j)}/F^{(j-1)}$ be the quotient.
    Using \eqref{Equation: right inverse graded jet bundles}, set
    \begin{align*}
        u&:=R_{j-1,f}(\rho_j\beta),
        \qquad\eta:=H_{j,f}u,\\
        S^{(j)}\beta
        &:=\eta+S^{(j-1)}(\beta-L_\psi\eta)
        \qquad\bigl(\beta\in H^0(F^{(j)})\bigr).
    \end{align*}
    Compatibility of the quotient operators shows that
    $\beta-L_\psi\eta\in H^0(F^{(j-1)})$, and therefore
    $L_\psi S^{(j)}\beta=\beta$.
    The norm bounds
    \eqref{Equation: right inverse quotient solution bound},
    \eqref{Equation: right inverse holomorphic lifting}, and
    \eqref{Equation: right inverse holomorphic derivative bound}
    give, for fixed exponents,
    \begin{align*}
        \|\eta\|_{L^2}&\leq C\ell^{N_{j-1}}\|\beta\|_{L^2},\\
        \|\beta-L_\psi\eta\|_{L^2}
        &\leq\|\beta\|_{L^2}+C\ell^{\frac12}\|\eta\|_{L^2}.
    \end{align*}
    Thus $S^{(j)}$ is again polynomially bounded and complex linear.
    There are only $n$ steps. Since $E^{(n)}=E$ and $F^{(n)}=F$,
    the map $S_\psi:=S^{(n)}$ satisfies
    \eqref{Equation: polynomial holomorphic right inverse}, after
    increasing the exponent to an integer.
\end{proof}

The right inverse constructed above gives the projected spectral gap directly by duality. In the following statement, $L_\psi^*$ denotes the formal $L^2$-adjoint of the intrinsic operator $L_\psi$.
\begin{proposition}[Projected polynomial spectral gap]\label{Proposition: projected polynomial spectral gap for L psi}
    Let $\varphi_n:\mathbb S^2\rightarrow\mathbb S^{2n}$ be a linearly full superminimal immersion of constant Gaussian curvature corresponding to a special orbit of the irreducible representation $\operatorname{SO}(3)\rightarrow\operatorname{SO}(2n+1)$ and let $\tilde\varphi_n:\mathbb{CP}^1\rightarrow\mathscr Z_n$ be its unique twistor lift, of degree $d_n$. Let $f:\Sigma\rightarrow\mathbb{CP}^1$ be a balanced holomorphic cover of degree $\ell\in\mathbb N\smallsetminus\{0\}$, and set
    \begin{align*}
        \psi
        :=
        \tilde\varphi_n\circ f
        \in
        \HH_d^f(\Sigma,\mathscr Z_n),
        \qquad
        d:=d_n\ell.
    \end{align*}
    Define
    \begin{align*}
        E
        :=
        \psi^*T^{1,0}\mathscr Z_n,
        \qquad
        V
        :=
        \psi^*V\mathscr Z_n,
    \end{align*}
    and let
    \begin{align*}
        Q_E:
        L^2(E)
        \rightarrow
        H^0(E)
    \end{align*}
    denote the $L^2$-orthogonal projection onto the finite-dimensional linear subspace $H^0(E)\subset L^2(E)$. Then there exist constants $c>0$, $N\in\n$, and $\ell_0\in\n$, depending only on $n$, the geometry of $(\Sigma,g_0)$, and the fixed constants in the definition of balancedness, such that if $\ell\geq\ell_0$ we have
    \begin{align}\label{Equation: final projected polynomial spectral gap}
        \big\|
            Q_E(L_\psi^*\xi)
        \big\|_{L^2(\Sigma)}
        \geq
        c\,d^{-N}
        \|\xi\|_{L^2(\Sigma)}
        \qquad
        \forall\,
        \xi\in H^0(\omega_\Sigma\otimes V).
    \end{align}
    In particular, since 
    \begin{align*}
        \big(L_{\psi}|_{H^0(E)}\big)^*=Q_E\circ\big(L_{\psi}^*|_{H^0(\omega_{\Sigma}\otimes V)}\big),
    \end{align*}
    we have that $L_{\psi}:H^0(E)\to H^0(\omega_{\Sigma}\otimes V)$ is surjective.
\end{proposition}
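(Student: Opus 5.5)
The estimate will follow by duality from Lemma~\ref{Lemma: polynomial holomorphic right inverse}. Fix $\xi\in H^0(\omega_\Sigma\otimes V)$ and let $S_\psi$ be the complex-linear right inverse given by that lemma, with $L_\psi S_\psi=\operatorname{Id}$ on $H^0(\omega_\Sigma\otimes V)$ and $\|S_\psi\beta\|_{L^2}\leq C\ell^N\|\beta\|_{L^2}$. Since $S_\psi\xi\in H^0(E)$ and $Q_E$ is the orthogonal projection onto $H^0(E)$, we can write
\begin{align*}
    \|\xi\|_{L^2}^2=\langle L_\psi S_\psi\xi,\xi\rangle_{L^2}=\langle S_\psi\xi,L_\psi^*\xi\rangle_{L^2}=\langle S_\psi\xi,Q_E(L_\psi^*\xi)\rangle_{L^2}.
\end{align*}
Cauchy--Schwarz then gives
\begin{align*}
    \|\xi\|_{L^2}^2\leq C\ell^N\|\xi\|_{L^2}\,\|Q_E(L_\psi^*\xi)\|_{L^2}.
\end{align*}
Dividing by $\|\xi\|_{L^2}$ and using $d=d_n\ell$ with $d_n$ fixed, so that $\ell^N\leq d^N$, yields \eqref{Equation: final projected polynomial spectral gap} with $c=C^{-1}$ and the same $N$ and $\ell_0$. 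The constants depend only on $n$, the geometry of $(\Sigma,g_0)$ and the balancedness constants, as inherited from the lemma.

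Two points need care. First, the formal adjoint identity $\langle L_\psi u,\xi\rangle=\langle u,L_\psi^*\xi\rangle$ holds because $\Sigma$ is closed and all sections are smooth, so there are no boundary terms. Second, $L_\psi$ must be viewed as a first-order differential operator on all smooth sections of $E$, and its restriction to $H^0(E)$ must land in $H^0(\omega_\Sigma\otimes V)$; this follows from \eqref{Equation: intrinsic local linearization}. Together these justify the displayed formula $(L_\psi|_{H^0(E)})^*=Q_E\circ L_\psi^*|_{H^0(\omega_\Sigma\otimes V)}$, with adjoints taken between the finite-dimensional spaces carrying the $L^2$ inner products.

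Surjectivity then follows from either of two arguments. Directly, $L_\psi S_\psi=\operatorname{Id}$ already gives it. Alternatively, the gap shows that the adjoint of the finite-dimensional map $L_\psi|_{H^0(E)}$ is injective, so the image of $L_\psi|_{H^0(E)}$ is the annihilator of its cokernel and hence everything. We should also check that $\psi\in\HH^f_d$, which is recorded in the statement. Here $\psi$ is holomorphic and horizontal because it is the composition of a holomorphic map with a horizontal holomorphic curve. It has degree $d_n\ell$ by functoriality of the pullback. It is linearly full because the directrix curve of $\tilde\varphi_n$ is linearly full and $f$ is surjective, so the image of $\psi$ equals that of $\tilde\varphi_n$.

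We expect no serious obstacle, since the analytic content sits entirely in Lemma~\ref{Lemma: polynomial holomorphic right inverse}. The only delicate step is the bookkeeping above: $Q_E$ must be inserted legitimately, which uses only that $S_\psi\xi$ is holomorphic, and one must check that $L_\psi$ preserves holomorphicity, so that the finite-dimensional adjoint formula is correct.
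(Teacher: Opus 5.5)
Your proposal is correct and matches the paper's proof: you use holomorphicity of $S_\psi\xi$ to insert $Q_E$, apply Cauchy--Schwarz with the bound from Lemma~\ref{Lemma: polynomial holomorphic right inverse}, and pass from $\ell^{-N}$ to $d^{-N}$ via $d=d_n\ell$. As in the paper, surjectivity follows directly from $L_\psi S_\psi=\operatorname{Id}$.
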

\begin{proof}
    By Lemma~\ref{Lemma: polynomial holomorphic right inverse},
    for all sufficiently large $\ell$ there exists a complex-linear
    map
    \begin{align*}
        S_\psi:H^0(\omega_\Sigma\otimes V)\rightarrow H^0(E)
    \end{align*}
    such that $L_\psi S_\psi=\operatorname{Id}$ and
    $\|S_\psi\xi\|_{L^2}\leq C\ell^N\|\xi\|_{L^2}$.
    Since $S_\psi\xi$ is holomorphic, for every
    $\xi\in H^0(\omega_\Sigma\otimes V)$ we have
    \begin{align*}
        \|\xi\|_{L^2}^2
        &=\langle L_\psi S_\psi\xi,\xi\rangle_{L^2}\\
        &=\langle S_\psi\xi,Q_E L_\psi^*\xi\rangle_{L^2}\\
        &\leq C\ell^N\|\xi\|_{L^2}
            \|Q_E L_\psi^*\xi\|_{L^2}.
    \end{align*}
    Dividing by $\|\xi\|_{L^2}$ when $\xi\neq0$ gives
    \begin{align*}
        \|Q_E L_\psi^*\xi\|_{L^2}
        \geq C^{-1}\ell^{-N}\|\xi\|_{L^2}.
    \end{align*}
    Since $d=d_n\ell$ and $d_n$ depends only on $n$, this is
    \eqref{Equation: final projected polynomial spectral gap}.
    Surjectivity also follows directly from the existence of $S_\psi$.
\end{proof}
\subsection{Proof of the main statement}
We now combine the construction of balanced covers with the deformation theory developed above.
\begin{proof}[Proof of Theorem~\ref{Theorem: main statement 1}]
    Fix $n\geq2$ and let $\varphi_n:\mathbb S^2\rightarrow\mathbb S^{2n}$ be the superminimal immersion of constant Gaussian curvature corresponding to a special orbit of the irreducible representation $\operatorname{SO}(3)\rightarrow\operatorname{SO}(2n+1)$ and let $\tilde\varphi_n:\mathbb{CP}^1\rightarrow\mathscr Z_n$ be its unique twistor lift. We denote its degree by
    \begin{align*}
        d_n:=\deg(\tilde\varphi_n).
    \end{align*}
    By Proposition~\ref{Proposition: existence of balanced covers},
    we can find $\ell_0\in\n$ such that, for every $\ell\geq\ell_0$, there exists a balanced holomorphic cover $f_\ell:\Sigma\rightarrow\cp^1$ of degree $\ell$. Set
    \begin{align}\label{Equation: balanced Boruvka sequence main proof}
        \psi_\ell
        :=
        \tilde\varphi_n\circ f_\ell,
        \qquad
        d_\ell
        :=
        d_n\ell.
    \end{align}
    Since $f_\ell$ is non-constant, it is surjective. Hence
    \begin{align*}
        \psi_\ell(\Sigma)
        =
        \tilde\varphi_n(\cp^1),
    \end{align*}
    and therefore $\psi_\ell$ is linearly full. Moreover,
    $\psi_\ell$ is holomorphic and horizontal, and
    \begin{align*}
        \deg(\psi_\ell)
        =
        \ell\,\deg(\tilde\varphi_n)
        =
        d_\ell.
    \end{align*}
    Thus, $\psi_\ell\in\HH_{d_\ell}^{f}(\Sigma,\mathscr Z_n)$. Set
    \begin{align*}
        E_\ell
        :=
        \psi_\ell^*T^{1,0}\mathscr Z_n,
        \qquad
        V_\ell
        :=
        \psi_\ell^*V\mathscr Z_n.
    \end{align*}
    By
    Corollary~\ref{Corollary: splitting along balanced Boruvka covers},
    there exist integers
    \begin{align*}
        a_1,\ldots,a_{\frac{n(n+1)}{2}}\geq n+1,
    \end{align*}
    depending only on $n$, such that
    \begin{align*}
        E_\ell
        &\cong
        \bigoplus_{i=1}^{\frac{n(n+1)}{2}}
        f_\ell^*\mathcal O_{\cp^1}(a_i),
        \\
        V_\ell
        &\cong
        f_\ell^*\mathcal O_{\cp^1}(2n+2)^{
            \oplus\frac{n(n-1)}{2}
        }.
    \end{align*}
    In particular,
    \begin{align*}
        \deg
        \big(
            f_\ell^*\mathcal O_{\cp^1}(a_i)
        \big)
        =
        a_i\ell
        \rightarrow+\infty
    \end{align*}
    as $\ell\to+\infty$. Hence, after increasing $\ell_0$ if
    necessary,
    \begin{align}\label{Equation: cohomology vanishing main proof}
        H^1(E_\ell)
        =
        0
        \qquad
        \forall\,\ell\geq\ell_0.
    \end{align}
    Moreover, by Serre duality and the splitting of $V_\ell$, we have
    \begin{align}\label{Equation: vertical cohomology vanishing main proof}
        H^1(\omega_\Sigma\otimes V_\ell)
        \cong
        H^0(V_\ell^\vee)^\vee
        =0 
        \qquad
        \forall\,\ell\geq\ell_0.
    \end{align}
    By
    \eqref{Equation: cohomology vanishing main proof} and the standard
    deformation theory of holomorphic maps, the space
    \begin{align*}
        \operatorname{H}_{d_\ell}(\Sigma,\mathscr Z_n)
    \end{align*}
    of holomorphic maps of degree $d_\ell$ is, in a $C^1$-neighbourhood
    $\mathcal U_\ell$ of $\psi_\ell$, a finite-dimensional complex manifold whose tangent space at $\psi_\ell$ is naturally identified with
    \begin{align}\label{Equation: tangent holomorphic maps main proof}
        T_{\psi_\ell}\mathcal U_\ell
        \cong
        H^0(\Sigma,E_\ell).
    \end{align}
    By upper semicontinuity, after shrinking $\mathcal U_\ell$ if
    necessary,
    \begin{align*}
        H^1
        \big(
            \Sigma,
            \omega_\Sigma\otimes
            \eta^*V\mathscr Z_n
        \big)
        =
        0
        \qquad
        \forall\,\eta\in\mathcal U_\ell.
    \end{align*}
    Consequently, the spaces
    \begin{align*}
        H^0
        \big(
            \Sigma,
            \omega_\Sigma\otimes
            \eta^*V\mathscr Z_n
        \big),
        \qquad
        \eta\in\mathcal U_\ell,
    \end{align*}
    form the fibres of a holomorphic vector bundle
    \begin{align*}
        \mathcal G_\ell
        \rightarrow
        \mathcal U_\ell.
    \end{align*}
    Since $H\mathscr Z_n$ is a holomorphic subbundle and $\Pi_{\operatorname V}:T^{1,0}\mathscr Z_n\to V\mathscr Z_n$ is a holomorphic bundle morphism, the vertical component of the differential defines a holomorphic section
    \begin{align*}
        \mathfrak h_\ell:
        \mathcal U_\ell
        &\rightarrow
        \mathcal G_\ell,
        \\
        \eta
        &\mapsto
        \Pi_{\operatorname V}(\partial\eta).
    \end{align*}
    Its zero set is precisely the set of holomorphic horizontal maps
    contained in $\mathcal U_\ell$.
    By the definition of the linearized horizontality operator and
    \eqref{Equation: tangent holomorphic maps main proof}, the
    differential of $\mathfrak h_\ell$ at $\psi_\ell$ is
    \begin{align}\label{Equation: derivative horizontality main proof}
        d\mathfrak h_\ell(\psi_\ell)
        =
        L_{\psi_\ell}\big|_{H^0(E_\ell)}
        :
        H^0(E_\ell)
        \rightarrow
        H^0(\omega_\Sigma\otimes V_\ell).
    \end{align}
    Since $f_\ell$ is balanced, Lemma~\ref{Lemma: polynomial holomorphic right inverse} supplies a holomorphic right inverse for $L_{\psi_\ell}$, after increasing $\ell_0$ if necessary. Consequently,
    \begin{align*}
        L_{\psi_\ell}:
        H^0(E_\ell)
        \rightarrow
        H^0(\omega_\Sigma\otimes V_\ell)
    \end{align*}
    is surjective for every $\ell\geq\ell_0$.
    The holomorphic implicit-function theorem applied to
    $\mathfrak h_\ell$ therefore shows that, after possibly shrinking
    $\mathcal U_\ell$, the zero set
    \begin{align*}
        Y_\ell
        :=
        \mathfrak h_\ell^{-1}(0)
    \end{align*}
    is a complex submanifold of $\mathcal U_\ell$, containing
    $\psi_\ell$, with
    \begin{align}\label{Equation: tangent Y main proof}
        T_{\psi_\ell}Y_\ell
        =
        \ker
        \big(
            L_{\psi_\ell}
            \big|_{H^0(E_\ell)}
        \big).
    \end{align}
    Since $\psi_\ell$ is linearly full and linear fullness is an open
    condition, we may shrink $Y_\ell$ once more so that
    \begin{align}\label{Equation: Y linearly full main proof}
        Y_\ell
        \subset
        \HH_{d_\ell}^{f}(\Sigma,\mathscr Z_n).
    \end{align}
    We now compute the complex dimension of $Y_\ell$. Recall
    that
    \begin{align*}
        \operatorname{rk}_{\c}(E_\ell)
        &=
        \frac{n(n+1)}{2},
        \\
        \operatorname{rk}_{\c}(V_\ell)
        &=
        \frac{n(n-1)}{2}.
    \end{align*}
    Moreover, by
    Remark~\ref{Remark: degrees of pullback bundles},
    \begin{align*}
        \deg(E_\ell)
        &=
        2nd_\ell,
        \\
        \deg(V_\ell)
        &=
        2(n-1)d_\ell.
    \end{align*}
    Therefore, by Riemann--Roch and
    \eqref{Equation: cohomology vanishing main proof},
    \begin{align}\label{Equation: h0 E main proof}
        h^0(E_\ell)
        =
        2nd_\ell
        +
        \frac{n(n+1)}{2}(1-g).
    \end{align}
    Similarly,
    \begin{align*}
        \deg(\omega_\Sigma\otimes V_\ell)
        =
        2(n-1)d_\ell
        +
        \frac{n(n-1)}{2}(2g-2),
    \end{align*}
    and hence, by
    \eqref{Equation: vertical cohomology vanishing main proof},
    \begin{align}\label{Equation: h0 omega V main proof}
        h^0(\omega_\Sigma\otimes V_\ell)
        =
        2(n-1)d_\ell
        +
        \frac{n(n-1)}{2}(g-1).
    \end{align}
    Since
    \eqref{Equation: derivative horizontality main proof} is
    surjective, we obtain from
    \eqref{Equation: tangent Y main proof},
    \eqref{Equation: h0 E main proof}, and
    \eqref{Equation: h0 omega V main proof} that
    \begin{align*}
        \dim_{\c}Y_\ell
        &=
        h^0(E_\ell)
        -
        h^0(\omega_\Sigma\otimes V_\ell)
        \\
        &=
        2d_\ell
        +
        \left(
            \frac{n(n+1)}{2}
            +
            \frac{n(n-1)}{2}
        \right)
        (1-g)
        \\
        &=
        2d_\ell+n^2(1-g).
    \end{align*}
    Finally, define
    \begin{align*}
        X_{\Sigma,d_\ell}
        :=
        \big\{
            \pi\circ\eta\, \mbox{ : }
            \eta\in Y_\ell
        \big\}.
    \end{align*}
    By Proposition
    \ref{Proposition: correspondence holo+hor with supermin} and
    \eqref{Equation: Y linearly full main proof}, every element of
    $X_{\Sigma,d_\ell}$ is a linearly full branched superminimal
    immersion of degree $d_\ell$. Hence
    \begin{align*}
        X_{\Sigma,d_\ell}
        \subset
        \operatorname{SM}_{d_\ell}^{f}(\Sigma,\s^{2n}).
    \end{align*}
    Moreover, uniqueness of the twistor lift implies that the map
    \begin{align*}
        Y_\ell
        &\rightarrow
        X_{\Sigma,d_\ell},
        \\
        \eta
        &\mapsto
        \pi\circ\eta
    \end{align*}
    is bijective. We equip $X_{\Sigma,d_\ell}$ with the complex
    manifold structure transported through this bijection. It follows
    that
    \begin{align}\label{Equation: dimension X main proof}
        \dim_{\c}X_{\Sigma,d_\ell}
        =
        2d_\ell+n^2(1-g)
    \end{align}
    for every $\ell\geq\ell_0$.
    Finally, by \eqref{Equation: balanced Boruvka sequence main proof},
    \begin{align*}
        d_\ell
        =
        d_n\ell
        \rightarrow+\infty
        \qquad
        \mbox{ as }\,
        \ell\to+\infty.
    \end{align*}
    Discarding the finitely many initial indices
    $\ell<\ell_0$ and relabelling the resulting sequence, we obtain the
    sequence and the complex manifolds required in
    Theorem~\ref{Theorem: main statement 1}. This concludes the proof.
\end{proof}
\section{A degree bound under nonpositive Gaussian curvature}
\label{Section: degree bound under nonpositive curvature}

We establish a restriction on the negatively curved members of the families
constructed in Theorem~\ref{Theorem: main statement 1}. More generally,
nonpositive Gaussian curvature gives an upper bound on the degree of any
linearly full branched superminimal immersion into a fixed even-dimensional
sphere in terms of the genus. The estimate is independent of the conformal
structure and does not require the curvature to be bounded away from zero.
\begin{proposition}\label{Proposition: degree bound under nonpositive curvature}
    For every $n\geq2$, there exists $C_n>0$ with the following property.
    Let $\Sigma$ be a closed Riemann surface of genus $g$, and let
    $\varphi:\Sigma\to\s^{2n}$ be a linearly full branched superminimal
    immersion of degree $d$ satisfying
    \begin{align*}
        K_\varphi\leq0
        \qquad\mbox{on }\,\Sigma\smallsetminus\mathcal B_\varphi.
    \end{align*}
    Then $g\geq2$ and, denoting by $h_\Sigma$ the metric of constant Gaussian
    curvature $-1$ in the conformal class of $\Sigma$, we have
    \begin{align}\label{Equation: hyperbolic metric bound under nonpositive curvature}
        \varphi^*g_{\s^{2n}}\leq C_n h_\Sigma.
    \end{align}
    In particular,
    \begin{align}\label{Equation: degree bound under nonpositive curvature}
        d\leq C_n(g-1).
    \end{align}
\end{proposition}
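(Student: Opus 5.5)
The plan is to deduce \eqref{Equation: hyperbolic metric bound under nonpositive curvature} from the Ahlfors--Schwarz--Yau lemma. That lemma says that if $\hat g$ is a conformal pseudo-metric on $\Sigma$ whose Gaussian curvature satisfies $K_{\hat g}\leq-\kappa<0$ wherever $\hat g\neq0$, and whose zeros are of the type produced by holomorphic data, then $\hat g\leq\kappa^{-1}h_\Sigma$. It suffices to build such a $\hat g$ with $\hat g\geq c_n\,\varphi^*g_{\s^{2n}}$ and with $\kappa=\kappa_n$ depending only on $n$.

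The metric $g_\varphi$ itself has $K_\varphi\leq0$, which is not strictly negative, so I will correct it using the holomorphic structure coming from the twistor lift $\tilde\varphi$ (Proposition~\ref{Proposition: correspondence holo+hor with supermin}). Concretely, I plan to use Calabi's harmonic sequence of a superminimal surface. In a local coordinate $z$, put $f_0=\varphi$ and $f_{k+1}=\partial_z f_k-(\partial_z\log|f_k|^2)f_k$. Set $\gamma_k:=|f_{k+1}|^2/|f_k|^2$, so that $\gamma_k\,|dz|^2$ are globally defined pseudo-metrics, with $\gamma_0\,|dz|^2$ a fixed multiple of $g_\varphi$. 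These satisfy the Toda-type (unintegrated Plücker) identities
\[
\partial_z\partial_{\bar z}\log\gamma_k=\gamma_{k+1}-2\gamma_k+\gamma_{k-1}
\]
(with $\gamma_{-1}$ determined by the normalisation $|f_0|=1$, contributing the constant $+1$ term), together with the isotropy conditions from superminimality, which truncate and symmetrise the sequence at step $n$. In this language the Gauss equation expresses $1-K_\varphi$ as a fixed multiple of $\gamma_1/\gamma_0$, so $K_\varphi\leq0$ becomes $\gamma_1\geq c\,\gamma_0$. I would then look for a weighted geometric mean $\hat g:=\prod_{k}\gamma_k^{t_k}|dz|^2$ with $t_0>0$ and small $t_k\geq0$. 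Its curvature is $-2\sum_k t_k(\gamma_{k+1}-2\gamma_k+\gamma_{k-1})/\prod\gamma_k^{t_k}$ up to normalisation. Using $\gamma_1\geq c\gamma_0$ and the truncation relations, the aim is to choose the weights so that this curvature is $\leq-\kappa_n$ and so that $\hat g\geq c_n g_\varphi$. The latter reduces to $\gamma_k\geq c\gamma_0$ for the relevant $k$, which should follow from the same Toda inequalities together with a maximum-principle argument on the compact surface.

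The main obstacle is finding weights that work uniformly in $n$. I would first try a $\hat g$ built only from $\gamma_0$ and $\gamma_1$, as in Remark~\ref{Remark: nonpositive curvature obstruction in S4}, where $\Delta\log a=3K-1$ is exactly the $n=2$ Toda relation. If that fails, I would use the full Toda system with weights proportional to the inverse Cartan matrix of type $B_n$, which makes $\sum_k t_k(\cdots)$ telescope into a negative multiple of a positive combination of the $\gamma_k$ over $\prod\gamma_k^{t_k}$.

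The pseudo-metric $\hat g$ vanishes only at branch points and at zeros of holomorphic osculating data, with local form $|z|^{2m}\times$smooth positive. So Yau's version of the Schwarz lemma applies, or one can lift to the universal cover and apply Ahlfors' lemma there. This gives \eqref{Equation: hyperbolic metric bound under nonpositive curvature}, and the same argument excludes $g\leq1$. For $g=0$, Gauss--Bonnet for $\hat g$ with nonnegative branch contributions contradicts negative curvature. For $g=1$, Ahlfors' lemma on the universal cover $\c$ forces $\hat g\equiv0$, which is impossible since $\varphi$ is nonconstant. Finally, integrating \eqref{Equation: hyperbolic metric bound under nonpositive curvature} and using Remark~\ref{Remark: degree and area} gives
\[
4\pi d=\operatorname{Area}(\varphi)\leq C_n\operatorname{Area}(h_\Sigma)=4\pi C_n(g-1),
\]
which is \eqref{Equation: degree bound under nonpositive curvature}.
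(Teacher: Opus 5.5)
There is a genuine gap. The whole argument rests on finding weights for which $\hat g$ has curvature $\le-\kappa_n$ and $\hat g\ge c_ng_\varphi$, and in the class you propose such weights do not exist.

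First, a small correction. The normalisation $|f_0|=1$ forces $\gamma_{-1}=\gamma_0$, so the first equation is $\partial\bar\partial\log\gamma_0=\gamma_1-\gamma_0$ and $K_\varphi=1-\gamma_1/\gamma_0$; there is no constant term. At the top, $\gamma_n=0$.

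For $\hat g=\prod_k\gamma_k^{t_k}|dz|^2$ to be a metric you need $\sum_kt_k=1$. Writing the system as $\partial\bar\partial\log\gamma=-A\gamma$, one gets
\[
\sum_{k}t_k\,\partial\bar\partial\log\gamma_k=\sum_{j=0}^{n-1}c_j\gamma_j,\qquad c=-At,\quad c_0=t_1-t_0,\quad c_j=t_{j-1}-2t_j+t_{j+1}\ (t_n:=0).
\]
Your only pointwise input is that $(\gamma_0,\dots,\gamma_{n-1})$ lies in the cone $\{\gamma_1\ge\gamma_0\ge0,\ \gamma_j\ge0\}$. Its extreme rays are $e_0+e_1$ and $e_j$ for $j\ge1$.
\begin{itemize}
\item If negativity is to follow from this input alone, you need $c_j\ge0$ for $j\ge1$ and $c_0+c_1=t_2-t_1\ge0$. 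Then $j\mapsto t_j$ is convex and nondecreasing on $\{1,\dots,n\}$ with $t_n=0$, so $t_j\le0$ for every $j\ge1$.
\item With your sign convention $t_j\ge0$, only $\hat g=\gamma_0|dz|^2$ survives. That is $g_\varphi$ up to a factor, and it has merely $K\le0$.
\item Negative weights do not rescue the scheme. Then $\hat g\ge c\,g_\varphi$ requires $\gamma_j\le C\gamma_0$, and already for $j=1$ this is a bound $|K_\varphi|\le C$. No such a priori bound exists: $K_\varphi\to-\infty$ at a branch point where $\gamma_1$ vanishes to lower order than $\gamma_0$.
\item The inverse-Cartan choice $t\propto A^{-1}\mathbf 1$ has the wrong sign. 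Here $A$ is a symmetric M-matrix, so $A^{-1}\mathbf 1>0$, and this choice gives $c\propto-\mathbf 1$, i.e. positive curvature. This is consistent with the Bor{\accent23 u}vka spheres, on which all these metrics are invariant and hence round.
\item The bounds $\gamma_k\ge c\gamma_0$ for $k\ge2$ fail in general. $\gamma_k$ vanishes at ramification points of the higher associated curves, which need not be branch points of $\varphi$.
\end{itemize}

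The points where no choice of weights works are the (near-)flat points $K_\varphi\approx0$. Handling them needs a higher-order rigidity statement that your proposal never proves. The paper proves it directly. In a flat conformal coordinate, superminimality and $\partial_z\partial_{\bar z}F=-\frac12F$ give $(\partial_z^rF,\overline{\partial_z^sF})=2^{-r}\delta_{rs}$. That yields $2n+2$ mutually orthogonal nonzero vectors in $\c^{2n+1}$, so no superminimal immersion is flat even locally. Branched Gauss--Bonnet then gives $g\ge2$.

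The bound $\varphi^*g_{\s^{2n}}\le C_nh_\Sigma$ is then proved by contradiction and compactness. Blowing up at the maximum of $\varphi_j^*g_{\s^{2n}}/h_j$ gives a superminimal conformal harmonic map $F_\infty:\c\to\s^{2n}$ with $\lambda_\infty\le1=\lambda_\infty(0)$ and $K\le0$ near $0$. So $\log\lambda_\infty$ is subharmonic with an interior maximum, $F_\infty$ is flat near $0$, and this contradicts the rigidity. The resulting $C_n$ is non-effective.

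An Ahlfors--Schwarz proof along your lines would give an explicit constant. It would, however, need an ultrahyperbolic pseudo-metric encoding a quantitative form of the flat rigidity, and by the computation above that cannot be a single monomial in the $\gamma_k$. This is the missing idea. Your closing steps are fine once such a metric is available: excluding $g\le1$, and converting the metric bound into $d\le C_n(g-1)$ via Remark~\ref{Remark: degree and area}.
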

\begin{proof}
    We first show that there is no flat superminimal immersion into
    $\s^{2n}$, even locally and without assuming linear fullness.
    Suppose that $F:U\to\s^{2n}$ is such an immersion. After restricting
    $U$ and choosing a flat conformal coordinate $z$, we may assume that
    \begin{align*}
        F^*g_{\s^{2n}}=|dz|^2.
    \end{align*}
    The harmonic-map equation reads
    \begin{align*}
        \partial_z\partial_{\bar z}F=-\frac12 F.
    \end{align*}
    Set $V_r:=\partial_z^rF$ for $r\geq0$. By superminimality,
    \begin{align*}
        (V_r,V_s)=0
        \qquad\mbox{for every }r,s\geq1,
    \end{align*}
    where $(\,\cdot\,,\,\cdot\,)$ denotes the complex-bilinear extension
    of the Euclidean inner product. Differentiating $(F,F)=1$ and then
    using these identities gives
    \begin{align*}
        (V_r,F)=0
        \qquad\mbox{for every }r\geq1.
    \end{align*}
    Define $G_{rs}:=(V_r,\overline{V_s})$. Thus $G_{00}=1$ and
    $G_{r0}=0$ for $r\geq1$. Since
    $\partial_{\bar z}V_r=-\frac12V_{r-1}$ for $r\geq1$, we have
    \begin{align*}
        \partial_{\bar z}G_{r,s-1}
        =-\frac12G_{r-1,s-1}+G_{rs}
        \qquad\mbox{for }r\geq s\geq1.
    \end{align*}
    An induction on $s$, followed by Hermitian symmetry, therefore yields
    \begin{align*}
        G_{rs}=2^{-r}\delta_{rs}
        \qquad\mbox{for every }r,s\geq0.
    \end{align*}
    Hence $V_0,\ldots,V_{2n+1}$ are $2n+2$ mutually orthogonal nonzero
    vectors in $\c^{2n+1}$, a contradiction.
    Returning to $\varphi$, its Gaussian curvature cannot vanish
    identically on the regular locus by the preceding argument.
    If $b_\varphi$ denotes its total branching order, the branched
    Gauss--Bonnet formula consequently gives
    \begin{align*}
        2\pi(2-2g+b_\varphi)
        =
        \int_{\Sigma\smallsetminus\mathcal B_\varphi}
        K_\varphi\,dA_\varphi
        <0.
    \end{align*}
    In particular, $g\geq2$, so $h_\Sigma$ is well defined.

    \smallskip
    \noindent
    We now prove
    \eqref{Equation: hyperbolic metric bound under nonpositive curvature}
    by contradiction. If no such constant existed for a fixed $n$, there
    would be closed Riemann surfaces $\Sigma_j$, with hyperbolic metrics
    $h_j$, and linearly full branched superminimal immersions
    $\varphi_j:\Sigma_j\to\s^{2n}$ satisfying the curvature assumption,
    such that
    \begin{align*}
        \varphi_j^*g_{\s^{2n}}=u_jh_j,
        \qquad
        M_j:=\max_{\Sigma_j}u_j\rightarrow+\infty.
    \end{align*}
    Here $u_j$ is smooth and nonnegative, and vanishes precisely at the
    branch points. Choose universal covering maps
    $p_j:\D\to\Sigma_j$ such that $u_j(p_j(0))=M_j$. Then
    \begin{align*}
        p_j^*h_j=\frac{4|dz|^2}{(1-|z|^2)^2}.
    \end{align*}
    Writing $\D_R:=\{w\in\c:|w|<R\}$, define
    \begin{align*}
        F_j:\D_{2\sqrt{M_j}}&\rightarrow\s^{2n},\\
        F_j(w)&:=
        \varphi_j\left(p_j\left(\frac{w}{2\sqrt{M_j}}\right)\right).
    \end{align*}
    These maps are conformal and harmonic. Their induced metrics are
    $F_j^*g_{\s^{2n}}=\lambda_j|dw|^2$, where
    \begin{align*}
        \lambda_j(w)
        =
        \frac{u_j\left(p_j\left(w/(2\sqrt{M_j})\right)\right)}{M_j}
        \left(1-\frac{|w|^2}{4M_j}\right)^{-2}.
    \end{align*}
    Consequently,
    \begin{align*}
        \lambda_j(0)=1,
        \qquad
        0\leq\lambda_j(w)
        \leq
        \left(1-\frac{|w|^2}{4M_j}\right)^{-2}.
    \end{align*}
    These inequalities give uniform bounds for the first derivatives
    on every fixed disk. The harmonic-map equation
    \begin{align*}
        \Delta F_j+|\nabla F_j|^2F_j=0
    \end{align*}
    and interior elliptic estimates then give, after passing to a
    subsequence, convergence in $C^\infty_{\operatorname{loc}}(\c)$
    to a conformal harmonic map
    $F_\infty:\c\to\s^{2n}$. The isotropy identities defining
    superminimality extend across branch points by continuity and
    pass to the smooth limit.
    Writing $F_\infty^*g_{\s^{2n}}=\lambda_\infty|dw|^2$, we have
    \begin{align*}
        0\leq\lambda_\infty\leq1,
        \qquad
        \lambda_\infty(0)=1.
    \end{align*}
    Choose a small connected neighbourhood $U$ of $0$ on which
    $\lambda_\infty>0$. Smooth convergence implies that the induced
    Gaussian curvature $K_\infty$ is nonpositive on $U$. Hence
    \begin{align*}
        \Delta\log\lambda_\infty
        =-2K_\infty\lambda_\infty\geq0
        \qquad\mbox{on }\,U.
    \end{align*}
    Since $\log\lambda_\infty\leq0$ and
    $\log\lambda_\infty(0)=0$, the strong maximum principle gives
    $\lambda_\infty\equiv1$ on $U$. Thus $F_\infty|_U$ is a flat
    superminimal immersion, contradicting the first part of the proof.
    This proves
    \eqref{Equation: hyperbolic metric bound under nonpositive curvature}.

    Finally, Remark~\ref{Remark: degree and area} and the Gauss--Bonnet
    theorem for $h_\Sigma$ give
    \begin{align*}
        4\pi d
        =
        \operatorname{Area}(\varphi)
        \leq
        C_n\operatorname{Area}(\Sigma,h_\Sigma)
        =
        4\pi C_n(g-1),
    \end{align*}
    which proves
    \eqref{Equation: degree bound under nonpositive curvature}.
\end{proof}
\begin{remark}
    In particular, for fixed $n$, any sequence of linearly full branched superminimal immersions 
    \begin{align*}
        \varphi_\ell:\Sigma_\ell\to\s^{2n} 
    \end{align*}
    of degrees $d_\ell\to+\infty$ with strictly negative Gaussian curvature away from their branch loci must satisfy
    \begin{align*}
        g(\Sigma_\ell)\geq1+\frac{d_\ell}{C_n}\rightarrow+\infty.
    \end{align*}
    Thus the simultaneous growth of the genus in Conjecture~\ref{Conjecture: negative curvature} is necessary.
\end{remark}
%
%
\bibliographystyle{amsalpha} 
\bibliography{main} 
\end{document}